\DeclareSymbolFont{rsfs}{U}{rsfs}{m}{n}
\DeclareSymbolFontAlphabet{\mathscrsfs}{rsfs}
\theoremstyle{definition}
\newtheorem{Def}{Definition}[section]
\newtheorem{Rmk}[Def]{Remark}
\newtheorem{Nota}[Def]{Notation} 
\newtheorem{Cond}[Def]{Condition}
\theoremstyle{plain}
\newtheorem{Prop}[Def]{Proposition}
\newtheorem{Thm}[Def]{Theorem}
\newtheorem{Lemma}[Def]{Lemma}
\newtheorem{Cor}[Def]{Corollary}
\newcommand{\R}{\mathbb{R}}
\newcommand{\Q}{\mathbb{Q}}
\newcommand{\E}{\mathbb{E}}
\newcommand{\Z}{\mathbb{Z}}
\newcommand{\N}{\mathbb{N}}
\newcommand{\Osc}{\text{ \rm Osc}}
\newcommand{\ha}{\dim_{\mathcal{H}}}
\DeclareFontFamily{U}{mathx}{}
\DeclareFontShape{U}{mathx}{m}{n}{<-> mathx10}{}
\DeclareSymbolFont{mathx}{U}{mathx}{m}{n}
\DeclareMathAccent{\widehat}{0}{mathx}{"70}
\DeclareMathAccent{\widecheck}{0}{mathx}{"71}
\renewcommand{\epsilon}{\varepsilon}
\def\diam{\mbox{\rm diam}}
\title{Multifractional Hermite processes: definition and first properties}
\author{L. Loosveldt\footnote{University of Luxembourg,Department of Mathematics (DMATH), Maison du Nombre, 6, avenue de la Fonte, L-4364 Esch-sur-Alzette, Grand Duchy Of Luxembourg. \linebreak laurent.loosveldt@uni.lu}}
\begin{document}

\maketitle

\begin{abstract}
We define multifractional Hermite processes which generalize and extend both multifractional Brownian motion and Hermite processes. It is done by substituting the Hurst parameter in the definition of Hermite processes as a multiple Wiener-Itô integral by a Hurst function. Then, we study the pointwise regularity of these processes, their local asymptotic self-similarity and some fractal dimensions of their graph. Our results show that the fundamental properties of multifractional Hermite processes are, as desired, governed by the Hurst function. Complements are given in the second order Wiener chaos, using facts from Malliavin calculus.
\end{abstract}

\noindent \textit{Keywords}: High order Wiener chaoses, Hermite processes, multifractional processes, modulus of continuity, law of iterated logarithm, local asymptotic self-similarity, fractal dimensions, Malliavin calculus 

\noindent  \textit{2020 MSC}: 60G20, 60G17, 60H05, 60H07, 26A15, 28A78
\section{Introduction}

Fractional Brownian motion with Hurst parameter $h \in (0,1)$ is known to be the unique Gaussian process with $B_h(0)=0$, mean zero and covariance function
\[ \mathbb{E}[B_h(t) B_h(s)]=\frac{c_h}{2} \left( |t|^{2h}+|s|^{2h}-|t-s|^{2h} \right),\]
where $c_h$ is a positive constant only depending on $h$. It was introduced by Kolmogorov, in 1940, to generate Gaussian ``spirals'' in Hilbert spaces \cite{MR0003441}. It is itself a generalization of the famous Brownian motion, when  $h=1/2$, defined by the botanist Robert Brown to describe the movements of pollen grains of the plant Clarkia Pulchella suspended in the water \cite{brown}. The first systematic study of fractional Brownian motion goes back to the famous paper \cite{MR242239} by Mandelbrot and Van Ness, in 1968. Since then, fractional Brownian motion has appeared in many real-life applications in various domains, such as telecommunications, biology, finance, image processing and much more \cite{MR1956041}.

Among its most fundamental properties, fractional Brownian motion has stationary increments and is $h$-self-similar, meaning that, for all $a>0$, the processes $\{a^{-h} B_h(at)\}_{t \in \R}$ and $\{B_h(t) \}_{t \in \R}$ have the same finite-dimensional distributions. The Hurst parameter also rules the regularity of the process since the uniform and pointwise Hölder exponents (see Section \ref{sec:strategy} for a definition) of $B_h$ are almost surely $h$. Actually, it appears that, for some applications, these properties are undesirable. For instance, fractional Brownian motion was used in image synthesis to model artificial mountains \cite{MR952853} but then, the obtained relief has the same (ir)regularity everywhere, which is not realistic. To overcome this drawback, the two papers \cite{plv95} and \cite{MR1462329} introduced independently and from two different perspectives the so-called multifractional Brownian motion. It is defined by substituting the Hurst parameter $h$ by a Hurst function $H(\cdot)$ with values in a compact interval of $(0,1)$. Under some regularity assumptions for the Hurst function, see Conditions \ref{condi} (a), (b) and (c) below, one can show that, almost surely, the function $H$ governed the Hölder regularity of the multifractional Brownian motion. Also, the self-similarity property is turned into a local asymptotic self-similarity property, see Definition \ref{def:selfismilarity} below.

Since the introduction of the multifractional Brownian motion, many authors studied this process, from various perspectives. One can cite for instance the papers \cite{MR2348754,MR2219711} concerning the local time of this process, \cite{MR2188838,MR3597564} for statistical estimation of the Hurst function, \cite{MR2337692,MR3089823} where fractal dimensions are computed, \cite{MR1948227,esserloosveldt2} for studies of the precise pointwise regularity, and \cite{MR3093550,MR3131310,MR3176508} where a stochastic calculus with respect to multifractional Brownian motion is defined. Also, different generalizations has been given such as in \cite{MR1726365,MR1819282,MR1932747}, where a larger class of Hurst functions are considered, in order that the Hölder exponent of the process is, almost surely, of the most general form given in \cite{MR1474098,MR1626706}, or in \cite{MR2177638,MR2351137,MR3023836} where the Hurst function is also random. Finally, various extensions have been given, using larger classes of processes closely related to the fractional Brownian motion like, for instance, the linear multifractional stable motion \cite{MR3293435,MR3640791} or the Surgailis multifractional process \cite{MR2376898,MR4438441}. We also refer to the book \cite{MR3839281} for a very clear view on the known facts about multifractional Brownian motion and related fields. The aim of the current paper is to define an extension of multifractional Brownian motion in an arbitrary Wiener chaos, using the affiliation of fractional Brownian motion in the class of Hermite processes. 

All along this paper, given $d \in \N^*$ and a symmetric function $f \in L^2(\R^d)$, $I_d(f)$ stands for the $d$-multiple Wiener-Itô integral of $f$ with respect to the Brownian motion $\{B(t) \}_{t \in \R}$ defined on a probability space $(\Omega,\mathcal{F},\mathbb{P})$. If $f$ is of the form
\begin{equation}\label{eqn:Wiener-Itôintegral}
f = \sum_{j_1,\ldots,j_d=1}^n a_{j_1,\ldots,j_d} \mathbbm{1}_{[s_{j_1},t_{j_1})} \otimes \cdots \otimes \mathbbm{1}_{[s_{j_d},t_{j_d})},
\end{equation}
where, $\otimes$ stands for the tensor product, $a_{j_1,\ldots,j_d}$ are such that, for all permutation $\sigma$, $a_{\sigma(j_1),\ldots,\sigma_(j_d)}=a_{j_1,\ldots,j_d}$ and $a_{j_1,\ldots,j_d}=0$ as soon as two indices $j_1,\ldots,j_d$ are equal and, for all $1 \leq \ell \neq \ell' \leq d$, $[s_{j_\ell},t_{j_\ell}) \cap [s_{j_{\ell'}},t_{j_{\ell'}}) = \emptyset$, then
\begin{equation}\label{eqn:Wiener-Itôintegral2}
I_d(f) := \sum_{j_1,\ldots,j_d=1}^n a_{j_1,\ldots,j_d} (B(t_{j_1})-B( s_{j_1})) \times \ldots (B(t_{j_d})-B( s_{j_d})).
\end{equation} 
It is straightforward that this last random variable belongs to $L^2(\Omega)$. For a general symmetric $f \in L^2(\R^d)$, $I_d(f)$ is then defined using the density of functions of the form \eqref{eqn:Wiener-Itôintegral} within the set of symmetric square integrable function and by checking that the corresponding random variables \eqref{eqn:Wiener-Itôintegral2} converge in $L^2(\Omega)$. Among many properties that enjoys this integral, we will mainly use the so-called isometry property: for all $f,g$ symmetric function in $L^2(\R^d)$ and $L^2(\R^{d'})$ respectively,
\begin{equation}\label{eqn:isometry}
\mathbb{E} \left[I_d(f) I_{d'}(g) \right]= \begin{cases}
d! \langle f, g \rangle & \text{if $d=d'$} \\
0 & \text{otherwise,}
\end{cases}
\end{equation}
where $\langle \cdot , \cdot \rangle$ stands for the canonical scalar product in $L^2(\R^d)$. The $d$th Wiener chaos is defined as the closed linear subspace of $L^2(\Omega)$ generated by the random variables of the form $I_d(f)$, with $f$ symmetric function in $L^2(\R^d)$.

Now, given $h \in  (\frac{1}{2},1)$, we define, for all $s \geq 0$, the function
\begin{equation}\label{nota:kernel}
 f_h(s, \bullet) \, : \, \R^d \to \R_+ \, : \, \mathbf{x} \mapsto \prod_{\ell=1}^d (s-x_\ell)_+^{\frac{h-1}{d}-\frac{1}{2}}.
\end{equation}
It is easy to show that, for all $t \geq 0$, the function
\[ \int_0^t  f_{h}(s, \bullet) \, ds\]
is symmetric and belongs to $L^2(\R^d)$. Then, the Hermite process of order $d$ and Hurst parameter $h$ is defined as\footnote{Some authors used a normalization constant in the definition of the Hermite process, in order that the process at time $1$ has unit variance.}
\begin{equation}\label{intro:defofhermite}
\left \{ I_d \left(\int_0^t  f_{h}(s, \bullet) \, ds \right) \right\}_{t \in \R_+}.
\end{equation}
When $d=1$, this process reduces to the fractional Brownian motion of Hurst parameter $h$. As soon as $d>1$, the Hermite process of order $d$ is known to be non-Gaussian. Hermite processes first appeared as limit of partial sums of correlated random variables, in the so-called Non-Central Limit Theorem, see \cite{MR550122,MR400329,MR550123}. Apart from Gaussianity, Hermite processes share many properties with fractional Brownian motion such as the stationarity of increments, the $h$-self similarity, the Hölder regularity. These facts are particularly interesting in application where we have to model a phenomena for which the Gaussianity is not a reasonable assumption. See for instance \cite{MR2212691} where the asymptotic distributions in a model for the unit root testing problem, with errors being non-linear transforms of linear processes, are shown to be functionals of Hermite processes.

In this paper we define multifractional Hermite processes by substituting the constant Hurst parameter in \eqref{intro:defofhermite} by a Hurst function with values in a compact interval of $(1/2,1)$. In order to do so, we first introduce the following fields, called ``generators of Multifractional Hermite processes''.

\begin{Def}\label{def:generator}
Given $d \in \N^*$, the \textit{generator of the multifractional Hermite process of order  $d$} is the real-valued centred field $\{X_d(t,h) \}_{(t,h) \in \R_+ \times (\frac{1}{2},1)}$ defined, for all $(t,h) \in \R_+ \times (\frac{1}{2},1)$, by the multiple Wiener-Itô integral
\begin{equation}\label{eqn:def:gene}
X_d(t,h):=I_d \left(\int_0^t  f_{h}(s, \bullet) \, ds \right).
\end{equation}
\end{Def}

Let us remark that, if $h \in (1/2,1)$ is fixed, $\{X_d(t,h) \}_{t \in \R_+}$ is the standard Hermite process of order $d$ and Hurst parameter $h$. In Proposition \ref{prop:holdereggene} below, we show that, for all $d$, there exist a modification of $\{X_d(t,h)\}_{t \in \R_+, h \in (1/2,1)}$ and $\Omega^*$, an event of probability $1$, such that, on $\Omega^*$, the trajectories of this modification are (Hölder) continuous. Then, we identify $\{X_d(t,h)\}_{t \in \R_+, h \in (1/2,1)}$ with this modification and define multifractional Hermite processes as follows.

\begin{Def}\label{def:multihermi}
Given $d \in \N^*$, a compact set $K$ of $(\frac{1}{2},1)$ and a function $H \, : \, \R_+ \to K$, the \textit{multifractional Hermite process of order $d$ and Hurst function $H$} is the process $\{X_d^{H(\cdot)}(t)\}_{t \in \R_+}$ defined, for all $t \in \R_+$, by
\begin{equation}\label{eqn:liengenher}
X_d^{H(\cdot)}(t)=X_d(t,H(t)).
\end{equation}
\end{Def}

\begin{Rmk}
Of course, the trajectories of multifractional Hermite processes and their associated generators depend on the additional parameter $\omega \in \Omega$. In order to ease the notations, all along this paper, when the context is clear, we allow ourself not to explicitly mention this dependence and to write $X_d^{H(\cdot)}(t)$ and $X_d(t,h)$ instead of $X_d^{H(\cdot)}(\omega,t)$ and $X_d(\omega,t,h)$ respectively.
\end{Rmk}

When $d=2$, the multifractional Hermite process corresponds to the multifractional Rosenblatt process previously introduced in the paper \cite{MR2768856}. Nevertheless, Wiener-Itô integrals of order $2$ enjoy specific properties (see the end of Section \ref{sec:strategy} and Section \ref{sec:rosen} in the present paper). Thus, the study undertook here is more general. Moreover, some facts proved in this paper are not considered in \cite{MR2768856}. Among other things, in Section \ref{sec:pointwiseholder} we compute the exact Hölder exponents of the multifractionnal Rosenblatt process (only upper bounds are given in \cite{MR2768856}) and in Section \ref{section:iterated} we establish a law of iterated logarithm. Also, in Section \ref{sec:rosen}, we prove the existence of a continuous and bounded density for increments of the multifractional Rosenblatt process, with the help of Malliavin calculus. It helps us to refine some facts explored in this paper. 

Other multifractional processes in arbitrary Wiener chaoses have already been defined in the literature. In the paper \cite{MR3824677}, the authors consider a multifractional generalization of processes introduced in \cite{MR3192502}. They are defined with an alternative kernel which facilitates the computation of a wavelet-type expansion. Also, in the papers \cite{MR3102483,marty}, the author obtains some multifractional processes in arbitrary Wiener chaoses as limits of weighted sums of multifractional Gaussian fields. These processes are a priori not directly related to the ones defined in this paper, as it is already the case in the first order chaos, see \cite{MR2451054}.

With this paper, we hope to open the door to further investigations concerning multifractional Hermite processes. We believe that many interesting research questions could be addressed, similarly to what have been done with the multifractional Brownian motion. Also, we think that multifractional Hermite processes could be used in applications to model phenomena where both Gaussianity and constant regularity can not be assumed. To motivate the introduction of multifractional Hermite processes, we focus on some first important properties concerning the behaviour of stochastic processes: Hölder regularity, the law of iterated logarithm, local asymptotic self-similarity and fractal dimensions for the graph. These notions are defined in Section \ref{sec:strategy} as well as the main strategies used to state and prove our main theorems. Section \ref{sect:modulus} is mainly concerned in giving an uniform modulus of continuity for multifractional Hermite processes. In section \ref{sec:pointwiseholder}, we provide a lower bound for the oscillations of multifractional Hermite processes. Section \ref{section:iterated} is devoted to prove a law of iterated logarithm. In Section \ref{sec:locaasym}, the local asymptotic self-similarity is discussed. Section \ref{sec:dim} deals with estimates for the Hausdorff and box-counting dimensions of the graph of multifractional Hermite processes. Finally, in Section \ref{sec:rosen}, some complements concerning the fractal dimensions of the graph of the multifractional Rosenblatt process are given, using specific arguments from the Wiener chaos of order $2$ and  Malliavin calculus. 

Our results show that, as desired, fundamental properties of multifractional Hermite processes are governed by their associated Hurst function.

\section{Preliminaries, strategy and main results}\label{sec:strategy}

As stated in the Introduction, the definition of a multifractional Hermite process relies on a modification of its generator which is almost surely Hölder continuous. Let us start by recalling the definition of this notion.

\begin{Def}
If $f$ a is (deterministic) continuous function defined on a interval $I$ of $\R$, the \textit{oscillation} of $f$ on $I$ is defined by
\begin{equation}\label{eqn:oscillation}
\Osc(f,I):= \sup_{t,s \in I} |f(t)-f(s)|.
\end{equation}
We say that $f$ belongs to the \textit{pointwise Hölder space} at $t_0 \in I$ and of order $\alpha \in (0,1)$ if there exist $R>0$ and $C>0$ such that, for all $0 < r < R$,
\begin{equation}\label{eqn:defholder}
\Osc(f,[t_0-r,t_0+r] \cap I) \leq C r^{\alpha}.
\end{equation}
In this case, we note $f \in C^\alpha (t_0)$. It is easy to check that, if $\alpha < \beta$, then $C^\beta(t_0) \subseteq C^\alpha(t_0)$. Therefore, the pointwise Hölder exponent of $f$ at $t_0$ is defined as
\[ h_f(t_0) := \sup \{ \alpha \in (0,1) \, : \, f \in C^\alpha (t_0) \}.\]
If, for all $t_0 \in I$, $f \in C^\alpha (t_0)$, with an uniform constant $C>0$ in \eqref{eqn:defholder}, we say that $f$ is \textit{uniformly Hölder} on $I$ of order $\alpha$ and we note $f \in C^\alpha(I)$. The uniform Hölder exponent of $f$ on $I$ is then naturally defined as
\[ H_f(I) := \sup \{ \alpha \in (0,1) \, : \, f \in C^\alpha (I) \}.\]
Of course, for all $t_0 \in I$, we have $h_f(t_0) \geq H_f(I)$.
\end{Def}

One of the easiest and most standard way to provide information concerning the Hölder regularity of a stochastic process is to use Kolmogorov continuity theorem. On this purpose, one has to obtain bounds for the norms in $L^p(\Omega)$ of the increments of the process. It is precisely the aim of the next proposition. In fact, this result will be crucial in numerous occasions all along this paper.

\begin{Prop}\label{prop:holdofgene}
Let $d \in \N^*$, $K$ be a compact set of $(\frac{1}{2},1)$ and $I$ be a compact interval of $\R_+$. There exist a positive deterministic constant $c_1$ only depending on $d$ an $K$ and a positive deterministic constant $c_2$, only depending on $d$, $K$ and $I$, such that, for all $t,u \in I$ and $h_1,h_2 \in K$,
\[  \| X_d(t,h_1)-X_d(u,h_2)\|_{L^2(\Omega)} \]
is bounded from above by $c_1 |t-u|^{\min \{h_1,h_2 \}} + c_2|h_1-h_2| $ and from below by $ c_1|t-u|^{\min \{h_1,h_2 \}} -c_2|h_1-h_2|$.
\end{Prop}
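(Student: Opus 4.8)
The plan is to use the isometry property \eqref{eqn:isometry}, which tells us that
\[
\|X_d(t,h_1)-X_d(u,h_2)\|_{L^2(\Omega)}^2 = d!\,\bigl\|g_{t,h_1}-g_{u,h_2}\bigr\|_{L^2(\R^d)}^2,
\]
where I write $g_{t,h}:=\int_0^t f_h(s,\bullet)\,ds$. So everything reduces to estimating the $L^2(\R^d)$-norm of the difference of these kernels, i.e.\ to a purely deterministic computation. I would split the difference through a triangle inequality in $L^2(\R^d)$:
\[
\bigl\|g_{t,h_1}-g_{u,h_2}\bigr\| \;\le\; \bigl\|g_{t,h_1}-g_{u,h_1}\bigr\| + \bigl\|g_{u,h_1}-g_{u,h_2}\bigr\|,
\]
and similarly for the reverse inequality (using $\|a-b\|\ge \|a-c\|-\|c-b\|$ with an intermediate kernel, and also the reverse triangle inequality to get both bounds simultaneously). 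The first term is a "same Hurst, different time" increment; by stationarity of increments of the Hermite process (fixing $h=h_1$) it equals $\|g_{|t-u|,h_1}-g_{0,h_1}\|=\|g_{|t-u|,h_1}\|$, and the known scaling/variance computation for Hermite processes gives $\|X_d(t,h_1)-X_d(u,h_1)\|_{L^2}^2 = c\,|t-u|^{2h_1}$ with a constant depending continuously on $h_1$, hence bounded above and below on the compact $K$. This produces exactly the $c_1|t-u|^{\min\{h_1,h_2\}}$ main term once I note that for $|t-u|$ bounded (by $\diam I$) one has $|t-u|^{h_1}$ comparable to $|t-u|^{\min\{h_1,h_2\}}$ up to constants depending on $I$ and $K$ — this is where the dependence of $c_2$ on $I$ enters.

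**The Hurst-increment term.** The substantive estimate is the second term, $\bigl\|g_{u,h_1}-g_{u,h_2}\bigr\|_{L^2(\R^d)}$, which I claim is $\le c\,|h_1-h_2|$ uniformly for $u\in I$, $h_1,h_2\in K$. Write $g_{u,h}(\mathbf{x}) = \int_0^u \prod_{\ell=1}^d (s-x_\ell)_+^{\frac{h-1}{d}-\frac12}\,ds$. The map $h\mapsto g_{u,h}$ should be Lipschitz from $K$ into $L^2(\R^d)$; the natural route is to differentiate the kernel in $h$ and bound the derivative. Differentiating $f_h(s,\mathbf{x})=\prod_\ell(s-x_\ell)_+^{\frac{h-1}{d}-\frac12}$ with respect to $h$ brings down a factor $\frac1d\sum_{\ell}\log(s-x_\ell)_+$ times $f_h(s,\mathbf{x})$, and then one needs $\sup_{h\in K}\bigl\|\int_0^u \partial_h f_h(s,\bullet)\,ds\bigr\|_{L^2(\R^d)} < \infty$ uniformly in $u\in I$. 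The logarithmic factors are integrable against the power-type kernels because the exponents $\frac{h-1}{d}-\frac12$ stay in a compact subinterval of $(-\frac12,-\frac12+\frac1d)\subset(-\frac12,0)$ when $h$ ranges over a compact subset of $(\frac12,1)$, so the kernels are square-integrable with room to spare and logarithms do not destroy that. Concretely I would compute $\|g_{u,h_1}-g_{u,h_2}\|^2 = \int_0^u\!\!\int_0^u \langle f_{h_1}(s,\bullet)-f_{h_2}(s,\bullet), f_{h_1}(s',\bullet)-f_{h_2}(s',\bullet)\rangle\,ds\,ds'$ (expanding the square of the time-integral), then factor the inner product over the $d$ coordinates as a product of one-dimensional integrals $\int_\R (s-x)_+^{a}(s'-x)_+^{b}\,dx$ of Beta-function type, and estimate the $h$-difference of each such explicit one-dimensional quantity by the mean value theorem, using the continuity (indeed smoothness) of these Beta-type integrals in the exponents on compact sets, together with the bound $s,s'\le \diam I$ to control everything uniformly. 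This yields the constant $c_2$ depending on $d$, $K$, and $I$.

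**Assembling and the reverse inequality.** Combining the two pieces: from above, $\|X_d(t,h_1)-X_d(u,h_2)\|_{L^2} \le \|g_{t,h_1}-g_{u,h_1}\|_{\sqrt{d!}} + \sqrt{d!}\|g_{u,h_1}-g_{u,h_2}\| \le c_1|t-u|^{h_1} + c_2|h_1-h_2| \le c_1'|t-u|^{\min\{h_1,h_2\}} + c_2'|h_1-h_2|$ after absorbing the comparison of exponents into the constants as above; by symmetry in $(t,h_1)\leftrightarrow(u,h_2)$ the same holds with $h_1$ replaced by $h_2$, which gives the $\min$. For the lower bound, I use $\|g_{t,h_1}-g_{u,h_2}\| \ge \|g_{t,h_1}-g_{u,h_1}\| - \|g_{u,h_1}-g_{u,h_2}\| \ge c_1|t-u|^{h_1} - c_2|h_1-h_2|$ and again symmetrize; the lower bound on $\|g_{t,h_1}-g_{u,h_1}\|$ is just the exact identity $= c_1|t-u|^{h_1}$ for the (single-Hurst) Hermite increment. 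The main obstacle is the Lipschitz-in-$h$ estimate for the kernel in $L^2(\R^d)$: one has to make precise that the explicit one-dimensional power integrals (and their $h$-derivatives, which carry logarithmic singularities) are controlled uniformly over $s,s'\in[0,\diam I]$ and $h$ in the compact set $K$, where the worst case is the behaviour near the endpoints $s=x$ and as the exponents approach $-\frac12$. Everything else is bookkeeping with the isometry and standard Hermite-process variance formulas.
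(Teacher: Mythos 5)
Your architecture coincides with the paper's: reduce to $L^2(\R^d)$ by the isometry \eqref{eqn:isometry}, insert an intermediate kernel carrying a single Hurst index, use stationarity/self-similarity of the Hermite process for the time increment, and prove a Lipschitz-in-$h$ bound for the Hurst increment by a mean-value argument. The genuine gap is in your justification of that Lipschitz bound. The exponent in \eqref{nota:kernel} is $\frac{h-1}{d}-\frac12$, which for $h\in(\frac12,1)$ lies in $\bigl(-\frac12-\frac{1}{2d},-\frac12\bigr)$, i.e.\ strictly below $-\frac12$, not in $\bigl(-\frac12,-\frac12+\frac1d\bigr)$ as you assert. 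Consequently the instantaneous kernels $f_h(s,\cdot)$ — and a fortiori $\partial_h f_h(s,\cdot)$, which carries the extra logarithm — are \emph{not} square-integrable: $(s-x_\ell)_+^{2(\frac{h-1}{d}-\frac12)}$ has a non-integrable singularity as $x_\ell\uparrow s$. Only the time-integrated kernels belong to $L^2(\R^d)$, and this comes from the off-diagonal identity $\langle f_h(s,\cdot),f_h(s',\cdot)\rangle=c_h|s-s'|^{2h-2}$ with $2h-2>-1$, not from slice-wise integrability. So "the kernels are square-integrable with room to spare and logarithms do not destroy that" is false, and your key claim $\sup_{h\in K}\bigl\|\int_0^u\partial_h f_h(s,\bullet)\,ds\bigr\|_{L^2(\R^d)}<\infty$ is left without proof. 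The paper closes exactly this point: after the pointwise mean value theorem it absorbs the logarithm into kernels with slightly shifted indices, $f_{h'}(s,\mathbf{x})\bigl|\ln\prod_\ell(s-x_\ell)_+\bigr|\le c_\varepsilon\bigl(f_{a-\varepsilon}(s,\mathbf{x})+f_{b+\varepsilon}(s,\mathbf{x})\bigr)$ with $\frac12<a-\varepsilon<b+\varepsilon<1$, and then applies the isometry a second time to reduce everything to the finite quantities $\|X_d(u,a-\varepsilon)\|_{L^2(\Omega)}$ and $\|X_d(u,b+\varepsilon)\|_{L^2(\Omega)}$.

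Two further points. Your "concrete" Beta-integral route can be repaired, but as written it is underpowered: the inner product $\langle f_{h_1}(s,\cdot)-f_{h_2}(s,\cdot),f_{h_1}(s',\cdot)-f_{h_2}(s',\cdot)\rangle$ does not factor over coordinates (only the four cross terms $\langle f_{h_i}(s,\cdot),f_{h_j}(s',\cdot)\rangle$ do), and a single first-order mean-value estimate on those cross terms gives an integrand of size $|h_1-h_2|$, hence only $\|g_{u,h_1}-g_{u,h_2}\|\lesssim|h_1-h_2|^{1/2}$; to reach the Lipschitz rate you need the second-order mixed difference in $(h_1,h_2)$, with the resulting $\log^2|s-s'|\,|s-s'|^{2h-2}$ factors controlled by integrability over $[0,\diam I]^2$, not by boundedness (they blow up as $s'\to s$, so "smoothness on compact sets" is not the right reason). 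Finally, in the assembly, $|t-u|^{h_1}$ and $|t-u|^{\min\{h_1,h_2\}}$ are not two-sidedly comparable as $|t-u|\to0$; only the one-sided bound holds, and using it would also push an $I$-dependence into $c_1$. Your symmetrization — choosing the intermediate kernel to carry the smaller index, which is what the paper does by assuming $h_1<h_2$ — is the correct fix and makes that comparison unnecessary.
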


\begin{proof}
Without loss of generality, one can assume $K=[a,b]$ with $\frac{1}{2}<a<b<1$ and $h_1 < h_2$. For all $t,u \in I$ and $h_1,h_2 \in K$, of course, we have
\begin{align}\label{eqn:intnormel2}
\|X_d(t,h_1)-X_d(u,h_1)\|_{L^2(\Omega)} &- \|X_d(u,h_1)-X_d(u,h_2)\|_{L^2(\Omega)}\leq  \nonumber \\  \|X_d(t,h_1) &-X_d(u,h_2) \|_{L^2(\Omega)} \nonumber \\ \leq \|X_d(t,h_1)-X_d(u,h_1)\|_{L^2(\Omega)}&+ \|X_d(u,h_1)-X_d(u,h_2)\|_{L^2(\Omega)}.
\end{align}

Using the self-similarity and stationarity of increments of Hermite processes, we know that there exists a deterministic constant $c_1>0$, only depending on $d$ and $K$, such that
\[\|X_d(t,h_1)-X_d(u,h_1)\|_{L^2(\Omega)} =  c_d|t-u|^{h_1}. \]

Thus, it only remains to bound $\|X_d(u,h_1)-X_d(u,h_2)\|_{L^2(\Omega)}$.
Using the isometry property \eqref{eqn:isometry} for Wiener-Itô integrals and Definition \ref{def:generator}, we know that, recalling the notation \eqref{nota:kernel} and writing $d\mathbf{x}$ for $dx_1\ldots dx_d$,
\begin{align}\label{eqn:intnormel2bis}
& \|X_d(u,h_1)-X_d(u,h_2)\|_{L^2(\Omega)} \nonumber \\
& \quad = d! \left( \int_{\R^d} \left( \int_0^u f_{h_1}(s, \mathbf{x})-f_{h_2}(s, \mathbf{x}) \, ds \right)^2 d\mathbf{x}\right)^\frac{1}{2}.
\end{align}
For all $(s,\mathbf{x})$ with $\prod_{\ell=1}^d (s-x_\ell)_+>0$  fixed, by mean value theorem, there is $h' \in [h_1,h_2]$ such that
\[ |f_{h_1}(s, \mathbf{x})-f_{h_2}(s, \mathbf{x})| =|h_1-h_2| f_{h'}(s, \mathbf{x})  \left|\ln \left(\prod_{\ell=1}^d (s-x_\ell)_+\right) \right|. \]
Now, using the fact that, for all $\varepsilon >0$,
\[ \lim_{x \to 0^+} x^\varepsilon \log(x^{-1})=0^+ \quad \text{et} \quad \lim_{x \to + \infty} \frac{\log(x)}{x^\varepsilon}=0^+,\]
one can choose $\varepsilon>0$ such that $\frac{1}{2}<a-\varepsilon<b+\varepsilon<1$ and find a deterministic constant $c_\varepsilon>0$ for which, for all $\mathbf{x} \in \R^d$,
\begin{equation}\label{eq:meanvalfh}
\left|\int_0^u f_{h_1}(s, \mathbf{x})-f_{h_2}(s, \mathbf{x}) \, ds \right| \leq c_\varepsilon |h_1-h_2| \int_0^u f_{a- \varepsilon}(s, \mathbf{x}) +f_{b+ \varepsilon}(s, \mathbf{x}) \, ds
\end{equation}
Plugging this into \eqref{eqn:intnormel2bis} and using again the isometry property \eqref{eqn:isometry} for Wiener-Itô integrals, we get
\begin{align*}
&\|X_d(u,h_1)-X_d(u,h_2)\|_{L^2(\Omega)} \\
 & \quad \leq c_\varepsilon |h_1-h_2| \left( \|X_d(u,a-\varepsilon)\|_{L^2(\Omega)} + \|X_d(u,b+\varepsilon)\|_{L^2(\Omega)} \right) \\
& \quad\leq c_d c_\varepsilon |h_1-h_2| (|u|^{a-\varepsilon} + |u|^{b+\varepsilon}) \\
& \quad \leq c_2 |h_1-h_2|,
\end{align*}
for a positive deterministic constant $c_2>0$ only depending on $d$, $K$ and $I$
\end{proof}

The next corollary is then a direct consequence of the hypercontractivity property on Wiener chaoses, see \cite[Theorem 2.7.2]{MR2962301}.
\begin{Cor}\label{cor:lpnorm}
Given $d \in \N^*$ and $K$ a compact set of $(\frac{1}{2},1)$, let $I$ be a compact interval of $\R_+$. For any $p \geq 1$ there exists a positive deterministic constant $c_p$, only depending on $d$, $p$, $K$ and $I$, such that, for all $t,u \in I$ and $h_1,h_2 \in K$,
\begin{equation}\label{momentp}
\| X_d(t,h_1)-X_d(u,h_2)\|_{L^p(\Omega)} \leq c_p \left(|t-u|^{\min \{h_1,h_2 \}} + |h_1-h_2| \right).
\end{equation}
\end{Cor}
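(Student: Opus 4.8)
The plan is to derive Corollary~\ref{cor:lpnorm} directly from Proposition~\ref{prop:holdofgene} by invoking the hypercontractivity inequality on Wiener chaoses. Recall that if a random variable $F$ belongs to the $d$th Wiener chaos and $p \geq 2$, then $\|F\|_{L^p(\Omega)} \leq (p-1)^{d/2} \|F\|_{L^2(\Omega)}$; this is exactly \cite[Theorem 2.7.2]{MR2962301}. The first step is to observe that, for fixed $t,u \in I$ and $h_1,h_2 \in K$, the random variable $X_d(t,h_1)-X_d(u,h_2)$ is a difference of two elements of the $d$th Wiener chaos, hence lies in the $d$th Wiener chaos, so hypercontractivity applies to it.

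Next I would split into the two regimes $p \geq 2$ and $1 \leq p < 2$. For $p \geq 2$, hypercontractivity gives
\[
\| X_d(t,h_1)-X_d(u,h_2)\|_{L^p(\Omega)} \leq (p-1)^{d/2}\, \| X_d(t,h_1)-X_d(u,h_2)\|_{L^2(\Omega)},
\]
and then the upper bound from Proposition~\ref{prop:holdofgene},
\[
\| X_d(t,h_1)-X_d(u,h_2)\|_{L^2(\Omega)} \leq c_1 |t-u|^{\min\{h_1,h_2\}} + c_2 |h_1-h_2|,
\]
yields \eqref{momentp} with $c_p := (p-1)^{d/2} \max\{c_1,c_2\}$, which depends only on $d$, $p$, $K$ and $I$ since $c_1$ depends only on $d,K$ and $c_2$ only on $d,K,I$. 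For $1 \leq p < 2$, I would simply use the monotonicity of $L^p$-norms on the probability space $\Omega$, namely $\|\cdot\|_{L^p(\Omega)} \leq \|\cdot\|_{L^2(\Omega)}$, to reduce to the $p=2$ case already covered by Proposition~\ref{prop:holdofgene}; one can then take $c_p = \max\{c_1,c_2\}$ for these values of $p$, or uniformly set $c_p := \max\{1,(p-1)^{d/2}\}\max\{c_1,c_2\}$ to cover all $p \geq 1$ at once.

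There is essentially no obstacle here: the statement is a routine consequence of hypercontractivity, and the only minor point to be careful about is that the power-type bound on the $L^2$-norm, although it controls a \emph{difference} of chaos elements rather than a single $I_d(f)$, still falls under hypercontractivity because the $d$th Wiener chaos is a linear subspace of $L^2(\Omega)$. It is also worth noting explicitly that the constant exponent $d/2$ in $(p-1)^{d/2}$ causes no trouble with the claimed dependence of $c_p$ on the stated parameters. If one wished to track constants more carefully, the value of $c_1$ is $d!\,c_d$ in the notation of the proof of Proposition~\ref{prop:holdofgene}, but for the corollary only the qualitative dependence matters.
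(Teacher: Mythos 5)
Your proposal is correct and follows essentially the same route as the paper, which also deduces the corollary directly from Proposition~\ref{prop:holdofgene} via the hypercontractivity property \cite[Theorem 2.7.2]{MR2962301}; your added remarks (the difference of chaos elements stays in the $d$th chaos, and the monotonicity of $L^p(\Omega)$-norms for $1 \leq p < 2$) are exactly the routine details the paper leaves implicit.
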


Inequality \eqref{momentp} combined with Kolmogorov continuity theorem are enough to consider the Hölder regularity of generators of multifractional Hermite processes.

\begin{Prop}\label{prop:holdereggene}
Given $d \in \N^*$, there exist a modification of the field \linebreak $\{X_d(t,h)\}_{(t,h) \in \R_+ \times (1/2,1)}$, also denoted by $\{X_d(t,h)\}_{(t,h) \in \R_+ \times (1/2,1)}$, and $\Omega^*$, an event of probability $1$, such that, on $\Omega^*$, given $I$, a compact interval of $\R_+$, and $K$, a compact set of $(\frac{1}{2},1)$, for all $0<a < \inf K$, there exists a finite positive random variable $C$ such that, for all $t,u \in I$ and $h_1,h_2 \in K$,
\begin{equation}\label{ineg:holdereggene}
|X_d(t,h_1)-X_d(u,h_2)| \leq C (|t-u|+|h_1-h_2|)^a.
\end{equation}
\end{Prop}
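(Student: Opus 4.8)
The plan is to apply the Kolmogorov–Chentsov continuity theorem to the two-parameter field $\{X_d(t,h)\}_{(t,h)\in I\times K}$, viewed as a random field indexed by the compact subset $I\times K$ of $\mathbb{R}^2$, and then to upgrade the statement from a fixed pair $(I,K)$ to a single almost-sure event valid for all compact $I$ and $K$ simultaneously. First I would fix $I$, a compact interval of $\mathbb{R}_+$, and $K$, a compact set of $(1/2,1)$, and fix $a$ with $0<a<\inf K$. By Corollary \ref{cor:lpnorm}, for every $p\ge 1$ there is $c_p=c_p(d,p,K,I)$ such that, for all $(t,h_1),(u,h_2)\in I\times K$,
\[
\|X_d(t,h_1)-X_d(u,h_2)\|_{L^p(\Omega)}\le c_p\bigl(|t-u|^{\min\{h_1,h_2\}}+|h_1-h_2|\bigr)\le 2c_p\,\bigl(|t-u|+|h_1-h_2|\bigr)^{\inf K},
\]
where in the last step I used that $|t-u|\le\operatorname{diam}(I)$ is bounded, $\min\{h_1,h_2\}\ge\inf K$, and $|h_1-h_2|\le\operatorname{diam}(K)$ is bounded, so that $|t-u|^{\min\{h_1,h_2\}}\le C|t-u|^{\inf K}$ and $|h_1-h_2|\le C|h_1-h_2|^{\inf K}$ on the bounded parameter set. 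Hence, writing the distance on $I\times K\subset\mathbb{R}^2$ as $\rho((t,h_1),(u,h_2))=|t-u|+|h_1-h_2|$ (equivalent to the Euclidean one), we get $\|X_d(t,h_1)-X_d(u,h_2)\|_{L^p(\Omega)}\le C_p\,\rho((t,h_1),(u,h_2))^{\inf K}$.

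Now I would invoke Kolmogorov–Chentsov for fields indexed by a compact subset of $\mathbb{R}^n$ with $n=2$: if $\|X(z)-X(z')\|_{L^p}\le C_p\,\rho(z,z')^{\gamma}$ with $\gamma p>n=2$, then $X$ has a modification whose trajectories are, on a full-probability event, Hölder continuous of every order $\alpha<\gamma-2/p$, with a finite random Hölder constant. Choosing $p$ large enough that $\inf K - 2/p > a$ (possible since $a<\inf K$), the theorem produces a modification $\{X_d(t,h)\}$ on $I\times K$ and an event of probability one on which there is a finite positive random variable $C$ with
\[
|X_d(t,h_1)-X_d(u,h_2)|\le C\,\bigl(|t-u|+|h_1-h_2|\bigr)^a
\]
for all $(t,h_1),(u,h_2)\in I\times K$, which is exactly \eqref{ineg:holdereggene}.

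It remains to remove the dependence of the modification and of $\Omega^*$ on $(I,K,a)$. For this I would exhaust $\mathbb{R}_+\times(1/2,1)$ by an increasing sequence of compact rectangles $I_k\times K_k$ (for instance $I_k=[0,k]$ and $K_k=[1/2+1/k,\,1-1/k]$) whose interiors cover the full index set, and take a sequence $a_k\uparrow$ with $0<a_k<\inf K_k$. Applying the previous paragraph to each $(I_k,K_k,a_k)$ gives a modification on $I_k\times K_k$ and a full-probability event $\Omega_k^*$; by the usual consistency argument two modifications agree a.s. on the overlap, so one can glue them into a single modification $\{X_d(t,h)\}$ on all of $\mathbb{R}_+\times(1/2,1)$, and set $\Omega^*=\bigcap_k\Omega_k^*$, still of probability one. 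On $\Omega^*$, any given compact $I\subset\mathbb{R}_+$ and compact $K\subset(1/2,1)$ with $0<a<\inf K$ are contained in some $I_k\times K_k$ with $a\le a_k$ (enlarging $k$), and since $(|t-u|+|h_1-h_2|)^{a_k}\le C'(|t-u|+|h_1-h_2|)^{a}$ on the bounded set $I_k\times K_k$, the bound \eqref{ineg:holdereggene} holds with a finite random constant $C$ depending on $I,K,a$. The main obstacle is purely bookkeeping: one has to phrase Kolmogorov–Chentsov for a two-parameter field and carefully check that the modifications obtained on the nested rectangles are mutually consistent so that the gluing produces a well-defined field on the whole parameter domain; the analytic content is entirely contained in Corollary \ref{cor:lpnorm}.
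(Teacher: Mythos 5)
Your main line of argument is the same as the paper's: the $L^p$ bound of Corollary \ref{cor:lpnorm}, rewritten on $I\times K$ as $\| X_d(t,h_1)-X_d(u,h_2)\|_{L^p(\Omega)}\le C_p(|t-u|+|h_1-h_2|)^{\inf K}$, followed by a two-parameter Kolmogorov--Chentsov theorem with $p$ chosen so that $\inf K-2/p>a$; the paper simply delegates the uniformity bookkeeping to a strong version of the Kolmogorov continuity theorem (the cited result of Kunita). Your first two paragraphs are correct.

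The globalization step, however, contains a step that fails as written. You exhaust the parameter set by rectangles $I_k\times K_k$ with $K_k=[1/2+1/k,\,1-1/k]$ and attach to each a single exponent $a_k<\inf K_k=1/2+1/k$. Since $\inf K_k\downarrow 1/2$, any admissible sequence $(a_k)$ stays below $1/2+1/k$ and can never exceed $1/2$; hence the claim that an arbitrary compact $K\subset(1/2,1)$ with $0<a<\inf K$ ``is contained in some $I_k\times K_k$ with $a\le a_k$'' is false whenever $a>1/2$ (take $K=[0.8,0.9]$ and $a=0.75$). The underlying issue is that applying Kolmogorov on the large rectangle $I_k\times K_k$ only yields H\"older exponents below $\inf K_k\approx 1/2$, not below $\inf K$, so the exhausting rectangles cannot carry the H\"older estimates you need. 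The repair is routine but must be made: use the exhaustion only to construct and glue the continuous modification, and obtain the H\"older bounds from a countable family of rectangles and exponents adapted to $K$ --- for instance all rectangles $[0,n]\times[q_1,q_2]$ with rational $1/2<q_1<q_2<1$ and all rational $\alpha<q_1$ --- observing that the already-constructed continuous modification, restricted to such a rectangle, satisfies the dyadic Kolmogorov estimates almost surely; then take $\Omega^*$ as the intersection of this countable family of full-probability events, and for a given $(I,K,a)$ pick a rational rectangle containing $I\times K$ with $q_1<\inf K$ and a rational $\alpha\in(a,q_1)$. With this correction your argument is complete and coincides in substance with the paper's proof.
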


\begin{proof}
Using \eqref{momentp}, we see that, for all $p>0$ there exists a deterministic constant $c>0$, only depending on$d$, $p$, $I$ and $K$, such that, for all $t,u \in I$ and $h_1,h_2 \in K$,
\[ \| X_d(t,h_1)-X_d(u,h_2)\|_{L^p(\Omega)} \leq c (|t-u|+|h_1-h_2|)^{\inf K} \]
and the conclusion follows by applying a strong version of Kolmogorov continuity theorem, see for instance \cite[Theorem 2.5.1 pages 165 and 166]{MR1914748}.
\end{proof}

Starting from now, we identify the generators of the multifractional Hermite processes with their continuous modification. Once this identification made, the multifractional Hermite process, of order $d$ and Hurst function $H$, $\{X_d^{H(\cdot)}(t) \}_{t \in \R_+}$ is defined by the equality \eqref{eqn:liengenher}. Let us now present our main results which focus on fundamental properties of these processes.

Hölder regularity provide nice information about the pointwise and global behaviour of the functions we consider. Nevertheless, often we are interested by more precise bound for the oscillations. It can be done by the mean of moduli of continuity.
\begin{Def}
If $f$ a is (deterministic) function defined on a interval $I$ of $\R$, we say that a continuous increasing function $\rho$ defined on $\R_+$ and such that $\lim_{r \to 0^+} \rho(r)=0$ is a \textit{modulus of continuity} for $f$ at $t_0 \in I$ if 
\begin{equation}\label{eqn:modulus}
 \limsup_{r \to 0^+}\frac{\Osc(f,[t_0-r,t_0+r] \cap I)}{\rho(r)} < + \infty.
\end{equation}
Moreover, if
\[ \limsup_{r \to 0^+}\frac{ \sup_{t_0 \in I}\Osc(f,[t_0-r,t_0+r] )}{\rho(r)} < + \infty\]
we say that $\rho$ is an \textit{uniform modulus of continuity} for $f$ on $I$.
\end{Def}
\begin{Rmk}
Of course, if $f$ is $\alpha$-Hölder, the function $r \mapsto r^\alpha$ is a modulus of continuity for $f$. Hölder regularity only compare the oscillations with power functions while, with moduli of continuity, one can deduce more precise and relevant information concerning the analysed function. It is particularly true when we consider stochastic processes, see for instance \cite{esserloosveldt,dawloosveldt,esserloosveldt2}. Note that one can define generalized Hölder spaces associated with modulus of continuity \cite{MR3002607,MR3812831,MR4019664} and that these spaces lead to specific multifractal formalisms \cite{MR4304490,MR4447306}.
\end{Rmk}
While considering the multifractional Hermite process $X_d^{H(\cdot)}$, we say that $h$ (resp. $\rho$) is a (pointwise or uniform) Hölder exponent (resp. modulus of continuity) for $X_d^{H(\cdot)}$ if it is a Hölder exponent (resp. modulus of continuity) for all the sample paths $t \mapsto X_d^{H(\cdot)}(t)$ on an event of probability $1$.

\begin{Nota}
Given $d \in \N^*$, a compact set $K$ of $(\frac{1}{2},1)$ and a function \linebreak $H \, : \, \R_+ \to K$, if $I$ is a compact interval of $\R_+$, we note
\[ \underline{H}(I) := \min \{H(I)\} \quad \text{and} \quad \overline{H}(I) := \max \{H(I)\}.  \]
\end{Nota}

While studying multifractional processes, authors generally require a regularity assumption for the function $H$ in order to consider the regularity of the process itself, see for instance \cite{MR3839281,MR1462329,plv95}. Here, we will also work with such conditions.

\begin{Cond}\label{condi}
Given $d \in \N^*$ and a compact set $K$ of $(\frac{1}{2},1)$, we say that the Hurst function $H \, : \, \R_+ \to K$ satisfies
\begin{enumerate}[(a)]
\item the uniform $\min$-Hölder regularity condition if, for all compact interval $I$ of $\R_+$, there exists $\gamma \in (\underline{H}(I) ,1)$ such that $H \in C^\gamma (I)$;
\item the pointwise Hölder  condition if, for all $t \in \R_+$,  there exists $\gamma \in (H(t) ,1)$ such that $H \in C^\gamma (t)$;
\item the local  Hölder  condition if, for all $t \in \R_+$,  there exist a compact interval $I_t \subset \R_+$ and $\gamma \in (H(t),1)$ such that $t \in I_t$ and $H \in C^\gamma (I_t)$;
\end{enumerate}
\end{Cond}
All along this paper, to be as general as possible, we use alternatively Condition \ref{condi} (a), (b) or (c) to state and prove our results. Note that if the Hurst function $H \, : \, \R_+ \to K$ is such that, for all compact interval $I$ of $\R_+$, there exists $\gamma \in (\overline{H}(I) ,1)$ for which $H \in C^\gamma (I)$, then Conditions \ref{condi} (a), (b) and (c) are obviously satisfied.

Our first main result consists in providing, under Condition \ref{condi} (a), an uniform modulus of continuity for each multifractional Hermite processes. In Section \ref{sect:modulus}, we prove the following Theorem.

\begin{Thm}\label{thm:modulusofcontinuity}
Given $d \in \N^*$, a compact set $K$ of $(\frac{1}{2},1)$ and a Hurst function $H \, : \, \R_+ \to K$ satisfying Condition \ref{condi} (a), there exists $\Omega^*_1$, an event of probability $1$, such that, on $\Omega^*_1$, for all compact interval $I$ of $\R_+$
\[  \limsup_{r \to 0^+}\frac{ \sup_{t_0 \in I} \Osc(X_d^{H(\cdot)},[t_0-r,t_0+r] \cap I )}{r^{\underline{H}(I)} (\log r^{-1})^\frac{d}{2}} < + \infty. \]
\end{Thm}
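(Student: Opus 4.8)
The plan is to reduce the uniform modulus estimate to a standard dyadic chaining argument built on the $L^p(\Omega)$ bounds for increments of the generator provided by Corollary \ref{cor:lpnorm}. First I would fix a compact interval $I = [\alpha,\beta] \subset \R_+$ and use Condition \ref{condi} (a) to pick $\gamma \in (\underline H(I), 1)$ with $H \in C^\gamma(I)$. The key pointwise decomposition is, for $t,u \in I$,
\begin{equation*}
X_d^{H(\cdot)}(t) - X_d^{H(\cdot)}(u) = X_d(t,H(t)) - X_d(u,H(u)) = \bigl(X_d(t,H(t)) - X_d(u,H(t))\bigr) + \bigl(X_d(u,H(t)) - X_d(u,H(u))\bigr).
\end{equation*}
For the second term I would invoke Proposition \ref{prop:holdereggene} (the Hölder continuity of the generator in both variables jointly, with exponent any $a < \inf K$) together with $|H(t)-H(u)| \leq c|t-u|^\gamma$, which gives a contribution of order $|t-u|^{a\gamma}$; choosing $a$ close enough to $\inf K$ and using $\gamma > \underline H(I)$ makes the exponent $a\gamma$ strictly larger than $\underline H(I)$, so this term is negligible against $r^{\underline H(I)}(\log r^{-1})^{d/2}$. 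The real work is therefore the first term, where the Hurst value $H(t)$ is frozen and only the time variable moves; for this I would apply Corollary \ref{cor:lpnorm} with $h_1 = h_2 = H(t)$, obtaining $\|X_d(t,h)-X_d(u,h)\|_{L^p(\Omega)} \leq c_p |t-u|^{\min\{\ldots\}} = c_p |t-u|^{h} \leq c_p |t-u|^{\underline H(I)}$ for $h \in K$, uniformly (note $|t-u| \leq \operatorname{diam} I$ so the smaller exponent dominates).

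Next I would set up the dyadic skeleton: for $n \in \N$ let $t_{n,k} = \alpha + k 2^{-n}(\beta-\alpha)$, $k = 0,\ldots,2^n$, and define the event
\begin{equation*}
A_n = \Bigl\{ \exists\, k,k' \text{ with } |k-k'|\leq 1 : |X_d(t_{n,k},h) - X_d(t_{n,k'},h)| > \theta_n \text{ for some } h \in K \Bigr\},
\end{equation*}
where $\theta_n = \kappa\, 2^{-n\underline H(I)} n^{d/2}$ for a constant $\kappa$ to be chosen. Because $K$ is not discrete one must first discretize $h$: cover $K$ by $O(2^n)$ points $h_j$ at spacing $2^{-n}$ and control the error $\sup_{|h-h_j|\leq 2^{-n}} |X_d(t_{n,k},h)-X_d(t_{n,k},h_j)|$ by Proposition \ref{prop:holdereggene} again — this error is $O(2^{-na})$, far below $\theta_n$. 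Then a union bound over the $O(2^{2n})$ relevant pairs $(k, h_j)$ together with the $L^p$ moment estimate and Markov's inequality, $\mathbb P(|Z| > \theta_n) \leq \theta_n^{-p}\|Z\|_{L^p}^p \lesssim 2^{-np\underline H(I)} n^{-pd/2} \cdot 2^{-np\underline H(I)}$ — wait, more carefully $\lesssim (2^{-n\underline H(I)})^p / (2^{-n\underline H(I)} n^{d/2})^p = n^{-pd/2}$ — gives $\mathbb P(A_n) \lesssim 2^{2n} n^{-pd/2}$, which is summable once $p$ is chosen large enough (any $p > 2d^{-1}\cdot 2 / 1$ suffices, e.g. $p > 6/d$ comfortably does it after reincorporating the extra $2^{-np\underline H(I)}$ decay I dropped — in fact the geometric factor makes summability automatic for every $p$ large). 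By Borel–Cantelli there is an event $\Omega^*_1$ of probability $1$ on which, for each $I$ (taking a countable exhausting family $I = [0,m]$), only finitely many $A_n$ occur.

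Finally, on $\Omega^*_1$ I would run the classical telescoping argument: given small $r>0$, pick $n$ with $2^{-n}(\beta-\alpha) \leq r < 2^{-(n-1)}(\beta-\alpha)$; for any $t_0 \in I$ and $s,t \in [t_0-r,t_0+r]\cap I$, write $s$ and $t$ in terms of their dyadic approximants at level $n$ and expand each as a telescoping sum over finer levels $m \geq n$, using at each level the bound $\theta_m$ between consecutive (or nearly consecutive) dyadic points, plus the frozen-Hurst reduction above to pass from $X_d^{H(\cdot)}$ to $X_d(\cdot,h)$ with a single frozen $h$. The sum $\sum_{m \geq n} \theta_m = \kappa \sum_{m\geq n} 2^{-m\underline H(I)} m^{d/2} \asymp 2^{-n\underline H(I)} n^{d/2} \asymp r^{\underline H(I)}(\log r^{-1})^{d/2}$ converges with the stated order, which yields $\Osc(X_d^{H(\cdot)},[t_0-r,t_0+r]\cap I) \lesssim r^{\underline H(I)}(\log r^{-1})^{d/2}$ uniformly in $t_0$, as required. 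The main obstacle is the bookkeeping in this last step — namely handling the non-discreteness of the Hurst parameter $h$ throughout the chaining (the $h$-discretization and its error control must be threaded through every level), and verifying that freezing $H(t_0)$ rather than re-freezing $H$ at each dyadic point does not cost more than the already-absorbed $|t-u|^{a\gamma}$ term; both are routine but require care to keep the exponent of the logarithm exactly $d/2$ rather than something larger.
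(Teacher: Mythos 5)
Your plan has two genuine gaps, and both concern exactly the points where the logarithmic factor $(\log r^{-1})^{d/2}$ has to be produced. First, the probabilistic core does not close: with a \emph{fixed} $p$, Markov plus Corollary \ref{cor:lpnorm} gives, per pair, $\mathbb{P}(|Z|>\theta_n)\lesssim \theta_n^{-p}\|Z\|_{L^p}^p = (c_p/\kappa)^p\, n^{-pd/2}$ --- the factors $2^{-n\underline{H}(I)}$ cancel exactly, as you yourself computed ``more carefully''; there is no ``extra $2^{-np\underline{H}(I)}$ decay'' left to reincorporate. Hence $\mathbb{P}(A_n)\lesssim 2^{2n}n^{-pd/2}$ diverges for every fixed $p$, and Borel--Cantelli is unavailable. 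Polynomial moments with fixed $p$ can only absorb a loss $2^{\varepsilon n}$, i.e.\ they reprove the Hölder exponent $\underline{H}(I)-\varepsilon$ (Proposition \ref{prop:holdereggene}); to get the sharp threshold $2^{-n\underline{H}(I)}n^{d/2}$ one needs the exponential tail bound for a fixed Wiener chaos, $\mathbb{P}(|X|\geq y\|X\|_{L^2(\Omega)})\leq\exp(-c_d y^{2/d})$ (Lemma \ref{thm:jason}), which with $y\asymp j^{d/2}$ beats the geometric union bound --- this is precisely what the paper's proof does. (Alternatively you could let $p=p_n\to\infty$ and track the hypercontractivity constant $c_p\sim p^{d/2}$, which amounts to reproving that lemma; but as written, with $p$ a fixed large constant, the summability claim is false.)

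Second, your treatment of the Hurst-change terms is wrong in the same direction. For the term $X_d(u,H(t))-X_d(u,H(u))$ you invoke Proposition \ref{prop:holdereggene} with exponent $a<\inf K$ and claim $a\gamma>\underline{H}(I)$; but $a\gamma<a<\inf K\leq\underline{H}(I)$, so this term is of order $|t-u|^{a\gamma}$ with an exponent \emph{strictly smaller} than $\underline{H}(I)$ and dominates, rather than being negligible against, $r^{\underline{H}(I)}(\log r^{-1})^{d/2}$. The same miscount affects your $h$-discretization: the error $O(2^{-na})$ is far \emph{above} $\theta_n=\kappa\,2^{-n\underline{H}(I)}n^{d/2}$, not below it. The correct way to handle the change of Hurst value is in $L^2(\Omega)$, via Proposition \ref{prop:holdofgene}, which is Lipschitz in $h$: $\|X_d(u,h_1)-X_d(u,h_2)\|_{L^2(\Omega)}\leq c_2|h_1-h_2|\leq c\,|t-u|^{\gamma}$ with $\gamma>\underline{H}(I)$ by Condition \ref{condi} (a), and then to multiply by the $j^{d/2}$ coming from the chaos tail bound. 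The paper sidesteps the $h$-grid entirely by chaining directly on $X_d^{H(\cdot)}$ at dyadic points (so the Hurst value is the one attached to each dyadic point), which both removes your bookkeeping obstacle and keeps the log exponent at exactly $d/2$. I would encourage you to rebuild the argument along those lines: dyadic skeleton on $X_d^{H(\cdot)}$ itself, $L^2$ estimates from Proposition \ref{prop:holdofgene} combined with Condition \ref{condi} (a), and Lemma \ref{thm:jason} plus Borel--Cantelli for the uniform a.s.\ control.
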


Under Condition \ref{condi} (b), one can compute the pointwise Hölder regularity of the process. As desired, it is governed by the Hurst function, as stated in our second main Theorem, proved in Section \ref{sec:pointwiseholder}.

\begin{Thm}\label{thm:holderregularity}
Given $d \in \N^*$, a compact set $K$ of $(\frac{1}{2},1)$ and a Hurst function $H \, : \, \R_+ \to K$ satisfying Condition \ref{condi} (b), there exists $\Omega^*_2$, an event of probability $1$, such that on $\Omega^*_2$, for all $t_0 \in \R_+$, we have
\[ h_{X_d^{H(\cdot)}}(t_0)=H(t_0).\]
\end{Thm}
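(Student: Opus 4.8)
The plan is to prove the two inequalities $h_{X_d^{H(\cdot)}}(t_0) \geq H(t_0)$ and $h_{X_d^{H(\cdot)}}(t_0) \leq H(t_0)$ separately, on a common event of probability $1$ obtained by intersecting the relevant events.

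\emph{Upper bound.} The inequality $h_{X_d^{H(\cdot)}}(t_0) \leq H(t_0)$ should follow from a lower bound on the oscillations of $X_d^{H(\cdot)}$ near $t_0$. The excerpt announces (Section \ref{sec:pointwiseholder}) that such a lower bound is available, so I would invoke it: there is an event of probability $1$ on which, for every $t_0$, $\limsup_{r\to 0^+} \Osc(X_d^{H(\cdot)}, [t_0-r,t_0+r])/(r^{\alpha}) = +\infty$ for every $\alpha > H(t_0)$, which immediately gives $h_{X_d^{H(\cdot)}}(t_0) \leq H(t_0)$. Roughly, this lower bound comes from the lower estimate in Proposition \ref{prop:holdofgene}: since $\|X_d(t,h_1) - X_d(u,h_2)\|_{L^2(\Omega)} \geq c_1 |t-u|^{\min\{h_1,h_2\}} - c_2|h_1-h_2|$, choosing $u = t_0$, $t = t_0 + r$, $h_1 = H(t_0+r)$, $h_2 = H(t_0)$, and using Condition \ref{condi} (b) to control $|H(t_0+r)-H(t_0)| \lesssim r^\gamma$ with $\gamma > H(t_0)$, one sees that the increment variance is of order $r^{2H(t_0)}$ up to a lower-order correction; a standard argument (e.g.\ using a suitable sequence $r_n \to 0$ and Borel--Cantelli, or the monotonicity/Gaussian-type tail bounds afforded by hypercontractivity in a fixed chaos) then promotes this into an almost sure pathwise lower bound on the oscillation.

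\emph{Lower bound.} For $h_{X_d^{H(\cdot)}}(t_0) \geq H(t_0)$, I would fix $\alpha < H(t_0)$ and show $X_d^{H(\cdot)} \in C^\alpha(t_0)$ almost surely. Using the decomposition $X_d^{H(\cdot)}(t) - X_d^{H(\cdot)}(t_0) = X_d(t, H(t)) - X_d(t_0, H(t_0))$, I would split the increment via the continuous modification of the generator: $|X_d(t,H(t)) - X_d(t_0,H(t_0))| \leq |X_d(t,H(t)) - X_d(t,H(t_0))| + |X_d(t,H(t_0)) - X_d(t_0,H(t_0))|$. For the second term, $h \mapsto X_d(t,h)$ being frozen at $h = H(t_0)$, the process $t \mapsto X_d(t, H(t_0))$ is (a modification of) the ordinary Hermite process of Hurst parameter $H(t_0)$, whose pointwise Hölder exponent at $t_0$ is $H(t_0)$; alternatively and more robustly, apply Proposition \ref{prop:holdereggene} on a compact neighborhood $I \times K$ of $(t_0, H(t_0))$ with $a$ chosen so that $\alpha < a < \inf K$, which already handles \emph{both} terms at once: \eqref{ineg:holdereggene} gives $|X_d(t,H(t)) - X_d(t_0,H(t_0))| \leq C(|t-t_0| + |H(t)-H(t_0)|)^a$, and by Condition \ref{condi} (b), $|H(t)-H(t_0)| \leq C' |t-t_0|^\gamma$ for some $\gamma > H(t_0) > a$ when $t$ is close enough to $t_0$, so $|t-t_0| + |H(t)-H(t_0)| \lesssim |t-t_0|$ and hence $\Osc(X_d^{H(\cdot)}, [t_0-r,t_0+r]) \lesssim r^a$. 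Choosing $K$ shrinking to $\{H(t_0)\}$ lets $a$ approach $H(t_0)$, giving $h_{X_d^{H(\cdot)}}(t_0) \geq H(t_0)$.

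\emph{Main obstacle.} The delicate point is the upper bound: Proposition \ref{prop:holdofgene} controls only the $L^2(\Omega)$ norm of increments, whereas the pointwise Hölder exponent is a \emph{pathwise} quantity, so one must pass from a single second-moment lower bound to an almost sure statement valid simultaneously at all $t_0$. This requires the argument of Section \ref{sec:pointwiseholder} — typically a careful dyadic-scale analysis combined with the small-ball/anti-concentration behavior of elements of a fixed Wiener chaos and a Borel--Cantelli argument — rather than anything elementary. I would therefore lean on the result announced there, and the proof of Theorem \ref{thm:holderregularity} itself is essentially the clean combination: the announced pathwise oscillation lower bound yields "$\leq$", Proposition \ref{prop:holdereggene} together with Condition \ref{condi} (b) yields "$\geq$", and intersecting the two probability-one events (over a countable exhausting family of neighborhoods, then using monotonicity in $\alpha$) gives the conclusion for all $t_0$ on a single event $\Omega_2^*$.
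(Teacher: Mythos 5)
Your overall architecture is the same as the paper's: the upper bound $h_{X_d^{H(\cdot)}}(t_0)\leq H(t_0)$ comes from the pathwise oscillation lower bound of Section \ref{sec:pointwiseholder} (Theorem \ref{thm:lowerbound}), and the lower bound from the Hölder-continuous modification of the generator together with Condition \ref{condi} (b); the paper's proof of Theorem \ref{thm:holderregularity} is exactly this two-line combination, so leaning on the announced Theorem \ref{thm:lowerbound} is legitimate. One caveat on your heuristic for that theorem: an $L^2$-lower bound plus upper tail estimates from hypercontractivity cannot by themselves produce an almost sure lower bound on increments; the actual proof needs the anti-concentration estimate of Lemma \ref{lemma:janson} applied to localized dominant parts $\widetilde{\Delta}^M_{j,k}$ that are \emph{independent} across well-separated enlarged dyadic intervals, plus Borel--Cantelli --- which you do gesture at in your ``main obstacle'' paragraph, so this is a matter of emphasis rather than a wrong route.

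There is, however, a genuine flaw in your argument for $h_{X_d^{H(\cdot)}}(t_0)\geq H(t_0)$. From $|H(t)-H(t_0)|\leq C'|t-t_0|^{\gamma}$ with $\gamma\in(H(t_0),1)$ you conclude $|t-t_0|+|H(t)-H(t_0)|\lesssim|t-t_0|$; this is false, because $\gamma<1$ makes $|t-t_0|^{\gamma}$ the dominant term for small $|t-t_0|$, so the correct bound is $|t-t_0|+|H(t)-H(t_0)|\lesssim|t-t_0|^{\gamma}$. Plugging this into \eqref{ineg:holdereggene} only gives $\Osc(X_d^{H(\cdot)},[t_0-r,t_0+r])\lesssim r^{a\gamma}$ with $a<\inf K'$, and since shrinking the compact set $K'$ only lets $a$ approach $H(t_0)$, the best exponent your argument can reach is $\gamma H(t_0)<H(t_0)$: the conclusion does not follow, and the same obstruction hits your alternative split through $X_d(t,H(t_0))$ as long as the $h$-increment is estimated with the isotropic exponent $a<\inf K'$ of Proposition \ref{prop:holdereggene}. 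The missing ingredient is the much better regularity of the generator in the $h$-variable: by Corollary \ref{cor:lpnorm} the $L^p(\Omega)$-increment in $h$ is $O(|h_1-h_2|)$, so a Kolmogorov-type (anisotropic) argument yields, almost surely, Hölder regularity of any order $1-\epsilon$ in $h$, uniformly on compacts. Then, splitting through $X_d(t,H(t_0))$, the $h$-correction is $O(|t-t_0|^{\gamma(1-\epsilon)})$ with $\gamma(1-\epsilon)>H(t_0)$ for $\epsilon$ small, while the increment of the Hermite process with frozen parameter $H(t_0)$ is $O(|t-t_0|^{a})$ for any $a<H(t_0)$, and the lower bound follows. (The paper dismisses this direction as ``easy'' without detail, but the isotropic estimate of Proposition \ref{prop:holdereggene}, used as you use it, is not by itself sufficient.)
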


In fact, Theorem \ref{thm:holderregularity} is a consequence of the stronger Theorem \ref{thm:lowerbound} below which gives a lower bound for the oscillations of multifractional Hermite processes.

When we study the pointwise regularity of a stochastic process, we are often interested in providing a so-called law of iterated logarithm. It shows that, almost surely, the oscillations at most of the points (in the sense of Lebesgue measure) can be bounded from below and above by a modulus of continuity featuring an iterated logarithm. In Section \ref{section:iterated}, we show that multifractional Hermite processes enjoy this property.

\begin{Thm}\label{thm:iterated}
Given $d \in \N^*$, a compact set $K$ of $(\frac{1}{2},1)$ and a Hurst function $H \, : \, \R_+ \to K$ satisfying Condition \ref{condi} (c), there exists $\overline{\Omega}$, an event of probability $1$, such that on $\overline{\Omega}$, for (Lebesgue) almost every $t_0 \in \R_+$, we have
\begin{equation}\label{eq:iterated}
 0< \limsup_{r \to 0^+} \frac{ \Osc(X_d^{H(\cdot)},[t_0-r,t_0+r] \cap \R_+ )}{r^{H(t)} (\log(\log r^{-1}))^\frac{d}{2}} < \infty.
\end{equation}
\end{Thm}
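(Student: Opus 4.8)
The plan is to establish the law of iterated logarithm by combining a local-in-time comparison with the underlying Hermite processes together with the classical Marcus–Rosen / Arcones type law of iterated logarithm for Hermite processes (which, thanks to self-similarity and stationarity of increments, reads $\limsup_{r\to 0^+} |Z_h(t_0+r)-Z_h(t_0)| / (r^h (\log\log r^{-1})^{d/2})$ is a.s.\ a finite positive constant for the standard Hermite process $Z_h$ of order $d$ and Hurst index $h$, uniformly over $t_0$ in a compact set and over $h$ in a compact subset of $(1/2,1)$). The key point is that near a fixed $t_0$ the multifractional process $X_d^{H(\cdot)}$ behaves like the stationary Hermite process with the \emph{frozen} Hurst parameter $H(t_0)$, the discrepancy being controlled by Condition \ref{condi}~(c) and Proposition \ref{prop:holdofgene} (or rather its $L^p$ version, Corollary \ref{cor:lpnorm}, feeding into a Kolmogorov-type estimate).

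First I would fix $t_0\in\R_+$, use Condition \ref{condi}~(c) to pick a compact neighbourhood $I_{t_0}$ and an exponent $\gamma\in(H(t_0),1)$ with $H\in C^\gamma(I_{t_0})$, and decompose, for $t=t_0+r$ with $r$ small,
\[
X_d^{H(\cdot)}(t)-X_d^{H(\cdot)}(t_0) = \bigl(X_d(t,H(t_0))-X_d(t_0,H(t_0))\bigr) + \bigl(X_d(t,H(t))-X_d(t,H(t_0))\bigr).
\]
The first bracket is an increment of the \emph{stationary} Hermite process of Hurst index $H(t_0)$, to which the (uniform) law of iterated logarithm for Hermite processes applies directly, giving that $\limsup_{r\to0^+}$ of its modulus divided by $r^{H(t_0)}(\log\log r^{-1})^{d/2}$ is a.s.\ finite and positive. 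For the second bracket, Proposition \ref{prop:holdofgene} combined with hypercontractivity (Corollary \ref{cor:lpnorm}) gives $\|X_d(t,H(t))-X_d(t,H(t_0))\|_{L^p(\Omega)}\le c_p|H(t)-H(t_0)|\le c_p' |r|^\gamma$; via a Kolmogorov/Borel–Cantelli argument along a geometric sequence $r_n=2^{-n}$ (and a chaining step to pass from the sequence to all small $r$, using the uniform Hölder continuity of the generator from Proposition \ref{prop:holdereggene}), this remainder is a.s.\ $O(r^{\gamma'})$ for any $\gamma'<\gamma$, hence negligible compared with $r^{H(t_0)}(\log\log r^{-1})^{d/2}$ since $\gamma>H(t_0)$. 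This proves \eqref{eq:iterated} at the fixed point $t_0$, with $H(t)$ there meaning $H(t_0)$ (matching the statement, where $t$ denotes the base point).

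Next I would upgrade this pointwise statement to a statement valid, on a single event of probability one, for Lebesgue-almost every $t_0$. The natural route is Fubini: the set $\{(\omega,t_0): \text{\eqref{eq:iterated} fails}\}$ is measurable, and for each fixed $t_0$ its $\omega$-section has probability zero by the previous paragraph, so by Fubini its $t_0$-section is Lebesgue-null for almost every $\omega$; intersecting with the full-measure event $\Omega^*$ of Proposition \ref{prop:holdereggene} produces the desired $\overline{\Omega}$. One technical care: the constants produced by the Hermite law of iterated logarithm depend on $H(t_0)$, so I would first restrict to $t_0$ ranging over a fixed compact interval and a compact sub-range of Hurst values, obtain the uniform version there, and then exhaust $\R_+$ by countably many such intervals.

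The main obstacle is securing a sufficiently uniform law of iterated logarithm for the standard Hermite processes — uniform in the base point $t_0$ over a compact interval and, ideally, locally uniform in the Hurst parameter $h$ — since the classical references typically state it for a single fixed $h$ at a single fixed point. I would address this either by invoking an existing uniform formulation (the law of iterated logarithm for Hermite processes is available in the literature via the self-similarity structure and the Arcones-type inequalities for chaoses) or, failing that, by reproving the upper bound via a union bound over a mesh of points together with the modulus-of-continuity control, and the lower bound by exploiting independence of increments over disjoint far-apart blocks after the $d$-fold Wiener–Itô integral is expressed through the moving-average kernel \eqref{nota:kernel}. The interplay between this uniformity and the $|H(t)-H(t_0)|$ error term is the delicate bookkeeping; everything else is routine chaining and Borel–Cantelli.
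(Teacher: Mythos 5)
Your overall architecture---freeze the Hurst parameter at $t_0$, show that the error $X_d(t,H(t))-X_d(t,H(t_0))$ is $O(|t-t_0|^{\gamma'})$ for some $\gamma'>H(t_0)$ thanks to Condition \ref{condi} (c), and pass to almost every $t_0$ on a single event by Fubini---is sound and matches the skeleton of the paper's proof, which likewise establishes, for each fixed $t_0$, an event of probability $1$ and then concludes by countable intersection and Fubini. The upper (finiteness) half is essentially fine: it can be obtained as you suggest, or as in Proposition \ref{pro:iterated1}, by a localized dyadic chaining with thresholds $c(\log j_0)^{\frac{d}{2}}(j-j_0+1)^{\frac{d}{2}}$ and Lemma \ref{thm:jason}.

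The genuine gap is the lower bound. You outsource the positivity of the limsup to a ``classical Marcus--Rosen / Arcones type'' local law of iterated logarithm for Hermite processes at a fixed point. No off-the-shelf statement in the required local form (limsup as $r\to 0^+$ of the oscillation at a fixed point divided by $r^{h}(\log\log r^{-1})^{\frac{d}{2}}$, strictly positive, for every chaos order $d$) is standard; lower tail bounds for chaos variables such as \cite[Theorem 6.12]{MR1474726} come with constants depending on the law of the variable, and precisely for this reason the paper has to manufacture the lower bound by hand in Section \ref{section:iterated}: dyadic increments are split into a dominant part $\widetilde{\Delta}^M_{j,k}$ (measurable with respect to the Brownian increments on an enlarged interval, hence independent across disjoint enlarged intervals), a negligible tail $\widecheck{\Delta}^M_{j,k}$ and a term $\widehat{\Delta}_{j,k}$ reflecting the varying Hurst value; a uniform lower tail estimate (Lemma \ref{lemma:lowerboundinproba}) combined with the second Borel--Cantelli lemma for the independent dominant parts then produces, infinitely often, an increment of size $c(\log j)^{\frac{d}{2}}2^{-j'H(\cdot)}$. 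Your fallback sentence about ``independence of increments over disjoint far-apart blocks via the moving-average kernel'' names exactly this mechanism, but none of it is carried out: one still needs the kernel truncation that yields genuine independence, a quantitative lower tail with controlled constants (admittedly simpler in your frozen-$h$ setting, where all dominant blocks at a given scale share one law up to scaling, but still to be proved), a choice of $M$ making the truncation error and the $|H-H(t_0)|$ term smaller than the dominant part, and the divergence Borel--Cantelli argument. As written, the decisive half of \eqref{eq:iterated} is assumed rather than proved. Two smaller points: the uniformity in $t_0$ and $h$ that you call the main obstacle is not actually needed, since Fubini only requires the statement at each fixed $t_0$; and the almost sure control of the remainder cannot come from Proposition \ref{prop:holdereggene} alone, whose exponent $a<\inf K$ would only give $|t-t_0|^{a\gamma}$, possibly below $H(t_0)$---the mesh-plus-tail-bound argument you sketch is the right fix and must be spelled out.
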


As shown by Theorem \ref{thm:holderregularity}, if the function $H$ is non constant, almost surely, the pointwise Hölder exponent of the multifractional Hermite process $X_d^{H(\cdot)}$ moves from one point to another. In particular, there is no hope that it is a self-similar process, see \cite[Proposition 1.60]{MR3839281}. For this reason, one prefers to check a weaker assumption, the so-called local asymptotic self-similarity.

\begin{Def}\label{def:selfismilarity}
A real-valued stochastic process $\{X(t) \}_{t \in \R_+}$ is \textit{weakly locally asymptotically self-similar} of order $h>0$ at the point $t_0$ with tangent process $\{Y(t)\}_{t \geq 0}$ if the sequence of process $\{ \varepsilon^{-h} (X(t_0+ \varepsilon t)-X(t_0))\}_{t \in \R_+}$ converges to the process $\{Y(t) \}_{t \in \R_+}$ in finite dimensional distributions, as $\varepsilon \to 0^+$. When $\{X(t)  \}_{t \in \R_+}$ and $\{Y(t) \}_{t \in \R_+}$ has, almost surely, continuous path and if the previous convergence also holds in the sense of continuous function over an arbitrary compact set of $\R_+$, we say that $\{X(t)  \}_{t \in \R_+}$ is \textit{strongly locally asymptotically self-similar} of order $h>0$ at the point $t_0$, with tangent process $\{Y(t)\}_{t \in \R_+}$.
\end{Def}

Of course, the strong local asymptotic self-similarity implies the weak local asymptotic self-similarity. Conversely, let us assume $\{X(t)  \}_{t \in \R_+}$ is weakly locally asymptotically self-similar of order $h>0$ at the point $t_0$ with tangent process $\{Y(t) \}_{t \in \R_+}$. Let $a>0$ be an arbitrary fixed real-number and, for $\varepsilon>0$, let $\mathbb{P}_\varepsilon^a$ be the probability measure induced by $\{ \varepsilon^{-h} (X(t_0+ \varepsilon t)-X(t_0))\}_{t \in \R_+}$ on the Borel $\sigma$-algebra of\footnote{As usual, $C([0,a],\R)$ stands for the set of real continuous function on $[0,a]$. } $C([0,a],\R)$. In order, to show that the convergence holds in the strong sense, it suffices to show, since $a>0$ is arbitrary, that the family $(\mathbb{P}_\varepsilon^a)_{\varepsilon >0}$ is relatively compact. Using Prohorov's criterion (see \cite[Section 5 in Chapter 1]{MR1700749} for a comprehensive view), it reduces to show that, for all $\delta>0$
\begin{equation}\label{eq:prohorov}
\lim_{\eta \to 0^+} \limsup_{\varepsilon \to 0^+} \mathbb{P} \left( \sup_{s,t \in [0,a], |t-s| \leq \eta} \left| \frac{X(t_0+\varepsilon t)-X(t_0+\varepsilon s)}{\varepsilon^h} \right| \geq \delta \right) =0.
\end{equation}

In Section \ref{sec:locaasym}, we use this technique to prove the local asymptotic self-similarity of the multifractional Hermite process.

\begin{Thm}\label{thm:localasym}
Let $d \in \N^*$, $K$ be a compact set of $(\frac{1}{2},1)$ and $H \, : \, \R_+ \to K$ be a Hurst function. If $H$ satisfies Condition \ref{condi} (b) then, for all $t_0 \geq 0$, the multifractional Hermite process $\{X_d^{H(\cdot)}(t) \, : \, t \geq 0 \}$ is weakly locally asymptotically self-similar of order $H(t_0)$ at $t_0$ with tangent process $\{X_d(t,H(t_0)) \, : \, t \geq 0 \}$, the Hermite process of order $d$ and Hurst parameter $H(t_0)$. Moreover, if $H$ satisfies Condition \ref{condi} (c), then this property also holds in the strong sense. 
\end{Thm}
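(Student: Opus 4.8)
The plan is to freeze the Hurst function at its value $H(t_0)$: near $t_0$ the rescaled increment of $X_d^{H(\cdot)}$ should be, up to a vanishing remainder, an exact copy of the Hermite process $\{X_d(t,H(t_0))\}_{t\ge 0}$. For $\varepsilon>0$ and $t\ge 0$ set $Z_\varepsilon(t):=\varepsilon^{-H(t_0)}\bigl(X_d^{H(\cdot)}(t_0+\varepsilon t)-X_d^{H(\cdot)}(t_0)\bigr)$ and, using \eqref{eqn:liengenher}, split $Z_\varepsilon=W_\varepsilon+R_\varepsilon$ with
\begin{align*}
W_\varepsilon(t)&=\varepsilon^{-H(t_0)}\bigl(X_d(t_0+\varepsilon t,H(t_0))-X_d(t_0,H(t_0))\bigr),\\
R_\varepsilon(t)&=\varepsilon^{-H(t_0)}\bigl(X_d(t_0+\varepsilon t,H(t_0+\varepsilon t))-X_d(t_0+\varepsilon t,H(t_0))\bigr).
\end{align*}
For fixed $h$, $\{X_d(u,h)\}_{u\ge 0}$ is the Hermite process of order $d$ and Hurst $h$, which vanishes at $0$, has stationary increments and is $h$-self-similar (the facts already used in the proof of Proposition \ref{prop:holdofgene}); hence $\{W_\varepsilon(t)\}_{t\ge 0}$ has, \emph{for every} $\varepsilon>0$, exactly the finite-dimensional distributions of $\{X_d(t,H(t_0))\}_{t\ge 0}$, and everything reduces to controlling $R_\varepsilon$.

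For the weak statement (Condition \ref{condi} (b)) I would show that $R_\varepsilon(t)\to 0$ in $L^2(\Omega)$ for each fixed $t$. Applying Proposition \ref{prop:holdofgene} with equal time arguments $t_0+\varepsilon t$ (so its $|t-u|$-term vanishes) and Hurst values $H(t_0+\varepsilon t),H(t_0)$ on the fixed compact interval $[\max\{0,t_0-1\},t_0+1]$ gives $\|R_\varepsilon(t)\|_{L^2(\Omega)}\le c_2\,\varepsilon^{-H(t_0)}\,|H(t_0+\varepsilon t)-H(t_0)|$; Condition \ref{condi} (b) supplies $\gamma\in(H(t_0),1)$ with $|H(t_0+r)-H(t_0)|\le\Osc\bigl(H,[t_0-r,t_0+r]\cap\R_+\bigr)\lesssim r^\gamma$ for small $r>0$, so $\|R_\varepsilon(t)\|_{L^2(\Omega)}\lesssim|t|^\gamma\,\varepsilon^{\gamma-H(t_0)}\to 0$. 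Therefore, for any $t_1,\dots,t_k\ge 0$, the vector $(R_\varepsilon(t_j))_j$ tends to $0$ in probability while $(W_\varepsilon(t_j))_j$ is distributed as $(X_d(t_j,H(t_0)))_j$, and Slutsky's theorem gives convergence in distribution of $(Z_\varepsilon(t_j))_j$ to $(X_d(t_j,H(t_0)))_j$ — precisely weak local asymptotic self-similarity of order $H(t_0)$ at $t_0$ with tangent process $\{X_d(t,H(t_0))\}_{t\ge 0}$, in the sense of Definition \ref{def:selfismilarity}.

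For the strong statement (Condition \ref{condi} (c)) the sample paths of $X_d^{H(\cdot)}$ and of the tangent process are continuous (Proposition \ref{prop:holdereggene} together with continuity of $H$), so by the discussion preceding the theorem it remains only to verify the Prohorov bound \eqref{eq:prohorov} for $\{Z_\varepsilon\}$ on each $[0,a]$. Condition \ref{condi} (c) provides $\delta_0>0$ and $\gamma\in(H(t_0),1)$ with $H$ being $\gamma$-Hölder, with some constant $C_H$, on a neighbourhood of $t_0$ containing $[t_0,t_0+\delta_0]$; for $\varepsilon\le\delta_0/a$ and $s,t\in[0,a]$ both $t_0+\varepsilon t$ and $t_0+\varepsilon s$ lie in this interval, so $\min\{H(t_0+\varepsilon t),H(t_0+\varepsilon s)\}\ge H(t_0)-C_H(\varepsilon a)^\gamma=:m_\varepsilon$ and $|H(t_0+\varepsilon t)-H(t_0+\varepsilon s)|\le C_H(\varepsilon|t-s|)^\gamma$. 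Feeding this into Corollary \ref{cor:lpnorm} (with the fixed interval $[t_0,t_0+\delta_0]$) yields, for every $p\ge 1$,
\[
\|Z_\varepsilon(t)-Z_\varepsilon(s)\|_{L^p(\Omega)}\lesssim\varepsilon^{m_\varepsilon-H(t_0)}\,|t-s|^{m_\varepsilon}+\varepsilon^{\gamma-H(t_0)}\,|t-s|^\gamma .
\]
Since $\varepsilon^{m_\varepsilon-H(t_0)}=\varepsilon^{-C_H(\varepsilon a)^\gamma}=\exp\bigl(C_Ha^\gamma\,\varepsilon^\gamma\log\varepsilon^{-1}\bigr)\to 1$, $m_\varepsilon\to H(t_0)$ and $\gamma>H(t_0)$, an elementary computation bounds the right-hand side by $C\,|t-s|^{\theta}$ for all $s,t\in[0,a]$ and all small enough $\varepsilon$, where $\theta:=H(t_0)/2>0$ and $C$ does not depend on $\varepsilon$. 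Choosing $p$ so large that $\theta p>1$ — legitimate because $Z_\varepsilon(t)-Z_\varepsilon(s)$ lives in the $d$-th Wiener chaos, on which all $L^p$-norms are equivalent by hypercontractivity — a Garsia--Rodemich--Rumsey type inequality gives $\mathbb{E}\bigl[\sup_{s,t\in[0,a],\,|t-s|\le\eta}|Z_\varepsilon(t)-Z_\varepsilon(s)|^p\bigr]\le C'\eta^{\theta p-1}$ with $C'$ independent of $\varepsilon$, and Markov's inequality converts this into \eqref{eq:prohorov}. Relative compactness of $(\mathbb{P}_\varepsilon^a)_{\varepsilon>0}$ together with the finite-dimensional convergence then gives convergence of $\{Z_\varepsilon(t)\}_{t\in[0,a]}$ to $\{X_d(t,H(t_0))\}_{t\in[0,a]}$ in $C([0,a],\R)$; as $a>0$ is arbitrary, this is the strong local asymptotic self-similarity.

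The weak statement is the soft part; the real work is the uniform-in-$\varepsilon$ moment estimate behind the tightness. The delicate point is that the space-increment exponent $\min\{H(t_0+\varepsilon t),H(t_0+\varepsilon s)\}$ may drop slightly below $H(t_0)$, so $\varepsilon^{-H(t_0)}|\varepsilon(t-s)|^{\min\{\cdot\}}$ is not obviously bounded; it is precisely the Hölder regularity of $H$ on a neighbourhood of $t_0$ with an exponent $\gamma>H(t_0)$ granted by Condition \ref{condi} (c) — and not available from the merely pointwise Condition \ref{condi} (b) — that rescues it, through $\varepsilon^{-C_H(\varepsilon a)^\gamma}\to 1$.
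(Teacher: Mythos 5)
Your proof is correct and follows essentially the same route as the paper: the same splitting $Z_\varepsilon=W_\varepsilon+R_\varepsilon$ with self-similarity/stationarity for $W_\varepsilon$, Proposition \ref{prop:holdofgene} plus Condition \ref{condi} (b) and Slutsky for the weak part, and the Prohorov criterion \eqref{eq:prohorov} verified via the moment bound of Corollary \ref{cor:lpnorm} under Condition \ref{condi} (c) together with a Garsia--Rodemich--Rumsey estimate and Markov's inequality for the strong part. The only cosmetic imprecision is that GRR yields $\eta^{\alpha p-1}$ for any $\alpha\in(\frac{1}{p},\theta)$ rather than $\eta^{\theta p-1}$ exactly, which changes nothing since the exponent stays positive.
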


The last notions that we consider in this paper to study the behaviour of a given multifractional Hermite process are the Hausdorff and box-counting dimensions of its graph. We refer to the fundamental book \cite{MR3236784} for details and proofs concerning these quantities.

\begin{Def}
Given $d \in \N^*$, a set $A \subseteq \R^d$ and $\varepsilon,h>0$, the quantity
\[ \mathcal{H}_\varepsilon^h(A) := \inf  \{ \sum_{j} \diam^h(A_j)  :  A \subseteq \bigcup_j A_j \text{ and, } \forall j, \diam(A_j) < \varepsilon \}\]
where, as usual, $\diam$ stands for the diameter, is called the \textit{$(h,\varepsilon)$-Hausdorff outer measure} of $A$. Moreover, for all $h>0$, the application $\varepsilon \mapsto \mathcal{H}_\varepsilon^h (A)$ is decreasing and it follows that the \textit{$h$-dimensional Hausdorff outer measure}
\[ \mathcal{H}^h (A) := \lim_{\varepsilon \to 0^+} \mathcal{H}_\varepsilon^h(A) \]
is well-defined. 
\end{Def}

The crucial property of Hausdorff outer measures is that, for any non-empty set $A$, there exists a critical value $h_0$ such that
\[\mathcal{H}^h(A)= \infty ~ \forall \, h <h_0 \quad \text{and} \quad \mathcal{H}^h(A)= 0 ~ \forall \, h >h_0 .\]

\begin{Def}
Given $d \in \N^*$ and a non-empty set $A \subseteq \R^d$, \textit{the Hausdorff dimension of $A$} is 
\[ \ha (A) = \sup \{h>0 : \mathcal{H}^h(A) = \infty \} = \inf \{h>0 : \mathcal{H}^h(A)=0 \},\]
while, by convention, $\ha(\emptyset)=- \infty$. 
\end{Def}

An alternative notion of dimensions for fractal sets are given by the box-counting dimensions.

\begin{Def}
Given $d \in \N^*$, a non-empty bounded set $A \subseteq \R^d$ and $\varepsilon>0$, let $N_\varepsilon(A)$ be the smallest number of sets of diameter at most $\varepsilon$ which can cover $A$. The quantities
\[ \underline{\dim}_{\mathcal{B}}(A) := \liminf_{\varepsilon \to 0^+} \frac{\log(N_\varepsilon(A)) }{- \log( \varepsilon)} \text{ and } \overline{\dim}_{\mathcal{B}}(A) := \limsup_{\varepsilon \to 0^+} \frac{\log(N_\varepsilon(A)) }{- \log( \varepsilon)}\]
are, respectively, the \textit{lower and upper box-counting dimensions} of $A$. If they are equal, the common value is refereed as the \textit{box-counting dimension} of $A$ and we denote it $\dim_{\mathcal{B}}(A)$.
\end{Def}

We also refer to \cite{MR3236784} for all the properties of these dimensions and a clear presentation of their respective utilities and interpretations. Here, we will mainly use the fact that, for any non-empty bounded set $A \subseteq \R^d$,
\begin{equation}\label{ine:dims}
\ha(A) \leq \underline{\dim}_{\mathcal{B}}(A) \leq \overline{\dim}_{\mathcal{B}}(A).
\end{equation}
Also, we use the fact that, for all $A,B$ subset of $\R^d$,
\begin{equation}\label{eqn:incredimension}
(A \subseteq B) \Rightarrow \ha(A) \leq \ha(B) 
\end{equation}

In this paper, given $d \in \N^*$, a compact set $K$ of $(\frac{1}{2},1)$, a Hurst function $H \, : \, \R_+ \to K$ and a compact interval $I \subset \R_+$, we are interested in the dimensions of the graph
\[ \mathcal{G}_d(I):=\{ (t, X_d^{H(\cdot)}(t)) \, : \, t \in I\}  \]
In view of inequalities \eqref{ine:dims}, our strategy consists in bounding from above the (upper) box-counting dimension and from below the Hausdorff dimension. The main Theorem of Section \ref{sec:dim} can then be stated as follows.

\begin{Thm}\label{thm:dim}
Given $d \in \N^*$, a compact set $K$ of $(\frac{1}{2},1)$, a Hurst function $H \, : \, \R_+ \to K$ satisfying Condition \ref{condi} (a) and a compact interval $I \subset \R_+$, there exists $\widetilde{\Omega}$, an event of probability $1$, such that on $\widetilde{\Omega}$, we have
\[ 1+ \frac{1-\underline{H}(I)}{d} \leq \ha \left( \mathcal{G}_d(I)\right) \leq \overline{\dim}_{\mathcal{B}}\left( \mathcal{G}_d(I)\right) \leq 2-\underline{H}(I).\]
\end{Thm}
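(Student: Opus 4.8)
The plan is to establish the two nontrivial inequalities separately: the upper bound on the upper box-counting dimension and the lower bound on the Hausdorff dimension, since the middle inequality $\ha \leq \overline{\dim}_{\mathcal{B}}$ is \eqref{ine:dims}. For the upper bound $\overline{\dim}_{\mathcal{B}}(\mathcal{G}_d(I)) \leq 2 - \underline{H}(I)$, I would use the standard box-counting argument based on oscillations: partition $I$ into $m$ subintervals of length $\sim m^{-1}$, and on each subinterval the graph is contained in a vertical strip whose height is controlled by the oscillation of $X_d^{H(\cdot)}$ over that subinterval. By Theorem \ref{thm:modulusofcontinuity} (which applies since $H$ satisfies Condition \ref{condi} (a)), on $\Omega^*_1$ this oscillation is bounded by $C\, m^{-\underline{H}(I)}(\log m)^{d/2}$ for $m$ large. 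Covering each strip by squares of side $m^{-1}$ requires at most $O(m^{1-\underline{H}(I)}(\log m)^{d/2})$ squares per subinterval, hence $O(m^{2-\underline{H}(I)}(\log m)^{d/2})$ squares total; plugging $\varepsilon \sim m^{-1}$ into the definition of $\overline{\dim}_{\mathcal{B}}$ and noting that the logarithmic factor contributes nothing to the limit gives the claimed bound.

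For the lower bound $\ha(\mathcal{G}_d(I)) \geq 1 + \frac{1 - \underline{H}(I)}{d}$, I would use the classical potential-theoretic / energy method: it suffices to exhibit a probability measure $\mu$ supported on $\mathcal{G}_d(I)$ (or a closely related random measure, e.g.\ the pushforward of Lebesgue measure on $I$ under $t \mapsto (t, X_d^{H(\cdot)}(t))$) whose $\beta$-energy $\iint \|x - y\|^{-\beta} \, d\mu(x)\, d\mu(y)$ is finite for every $\beta < 1 + \frac{1 - \underline{H}(I)}{d}$, and then apply the standard fact (the energy/Frostman criterion in \cite{MR3236784}) that $\ha(\mathcal{G}_d(I)) \geq \beta$. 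Concretely, the expected energy reduces, via Tonelli, to estimating
\[
\E \iint_{I \times I} \frac{dt\, ds}{\bigl(|t-s|^2 + |X_d^{H(\cdot)}(t) - X_d^{H(\cdot)}(s)|^2\bigr)^{\beta/2}},
\]
and one needs a lower bound on the "typical size" of the increment $X_d^{H(\cdot)}(t) - X_d^{H(\cdot)}(s)$ to control the inner expectation. The key analytic input is that, for $t, s$ close, the increment has an $L^2(\Omega)$ norm comparable to $|t - s|^{\overline{H}(J)}$ (or at least bounded below by a constant times $|t-s|^{\overline{H}(I)}$, using Proposition \ref{prop:holdofgene} together with Condition \ref{condi} (a) to absorb the $|H(t)-H(s)|$ term since $\gamma > \underline{H}(I)$), and that in a fixed Wiener chaos $L^2$-lower bounds on a random variable translate, by hypercontractivity/anti-concentration, into bounds on negative moments $\E[|X_d^{H(\cdot)}(t) - X_d^{H(\cdot)}(s)|^{-\lambda}]$ for small $\lambda$. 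Carrying out the resulting double integral — of the form $\iint (|t-s|^2 + |t-s|^{2\overline{H}})^{-\beta/2}\,dt\,ds \lesssim \iint |t-s|^{-\beta \overline{H}}\,dt\,ds$ — one checks it converges precisely when $\beta \overline{H}(I) < 1 + \overline{H}(I)$, i.e.\ $\beta < 1 + 1/\overline{H}(I)$; one must then verify this threshold is at least $1 + \frac{1-\underline{H}(I)}{d}$, which follows since $\overline{H}(I) < 1$ makes the chaos estimate more favorable, and the sharp constant $\frac{1-\underline{H}(I)}{d}$ emerges from the $d$-fold product structure of the kernel $f_h$ in \eqref{nota:kernel}.

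I expect the main obstacle to be the negative-moment (anti-concentration) estimate for increments of $X_d^{H(\cdot)}$ in the higher-order chaos, which is needed to bound the expected energy: unlike the Gaussian case $d = 1$ where the increment is Gaussian and negative moments are immediate, for $d \geq 2$ one must argue via the hypercontractivity equivalence of all $L^p$ norms on a fixed chaos together with a density/anti-concentration argument (or, for $d=2$, the Malliavin-calculus density results alluded to in Section \ref{sec:rosen}) to ensure $\E[|X_d^{H(\cdot)}(t) - X_d^{H(\cdot)}(s)|^{-\lambda}]$ stays finite and of the right order uniformly in $t, s \in I$. A secondary technical point is that $\underline{H}$ and $\overline{H}$ may differ on $I$, so one has to be careful: the upper bound naturally wants $\underline{H}(I)$ (worst-case regularity gives the most covering sets) while the lower-bound energy estimate naturally produces $\overline{H}(I)$ in the denominator exponent — one reconciles this by verifying $1 + \frac{1-\underline{H}(I)}{d} \leq 1 + \frac{1}{\overline{H}(I)}$, or, more likely, by running the Hausdorff lower bound on a small subinterval $J \subset I$ where $H$ is nearly constant and equal to $\underline{H}(I)$ (using continuity of $H$ and \eqref{eqn:incredimension}, since $\mathcal{G}_d(J) \subseteq \mathcal{G}_d(I)$), so that $\overline{H}(J) \to \underline{H}(I)$ and the bound $1 + \frac{1-\overline{H}(J)}{d}$ can be pushed up to $1 + \frac{1-\underline{H}(I)}{d}$.
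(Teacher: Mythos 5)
Your plan follows the same two-sided architecture as the paper: the upper bound via the uniform modulus of continuity of Theorem \ref{thm:modulusofcontinuity} (the paper packages the covering argument you describe as Lemma \ref{lemma:falco}), and the lower bound via the energy criterion (Lemma \ref{lemma:falco2}) applied to the pushforward of Lebesgue measure, localized on small intervals around a point $t_0$ with $H(t_0)=\underline{H}(I)$, exactly as in your closing remark. The upper-bound half of your argument is correct.

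The gap is in the lower bound, and it sits exactly at the point you flag as ``the main obstacle'' without resolving it. To make the expected $\beta$-energy finite you need a quantitative small-ball (anti-concentration) estimate for the increments, of the form $\mathbb{P}(|X_d^{H(\cdot)}(t)-X_d^{H(\cdot)}(u)|\leq x)\lesssim x^{1/d}|t-u|^{-\overline{H}(t_0,\varepsilon)/d}$, and neither mechanism you invoke produces it: an $L^2(\Omega)$ lower bound on the increment says nothing about the probability that the increment is small, and hypercontractivity only compares positive moments on a fixed chaos --- it yields no negative moments and no density bound (the paper stresses that boundedness of densities of Hermite-type increments is open for general $d$, which is precisely why it resorts to the Carbery--Wright inequality, Lemma \ref{lemma:carbery}, applied to finite-rank polynomial approximations of $I_d(f_{t,u}^{H(\cdot)})$ together with Fatou, giving Proposition \ref{lemma:boundproba}). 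Relatedly, your intermediate computation is not correct: replacing the increment by its ``typical size'' $|t-s|^{\overline{H}}$ inside the energy gives $\iint |t-s|^{-\beta\overline{H}}\,dt\,ds$, which converges iff $\beta<1/\overline{H}(I)$, not under your stated condition $\beta<1+1/\overline{H}(I)$ (the latter exceeds $2$, impossible for a planar graph); and in any case the increment is not pointwise bounded below by $c|t-s|^{\overline{H}}$, so the substitution is not legitimate. The exponent $1+\frac{1-\underline{H}(I)}{d}$ does not ``emerge from the $d$-fold product structure of the kernel'': it comes from the $x^{1/d}$ factor in Carbery--Wright, through the estimate $\E\left[\left(|X_d^{H(\cdot)}(t+r)-X_d^{H(\cdot)}(t)|^2+r^2\right)^{-s/2}\right]\lesssim r^{\frac{1}{d}-s-\frac{\overline{H}(t_0,j^{-1})}{d}}$ as in \eqref{estimationespedim}, which is integrable near the diagonal precisely for $s<1+\frac{1-\overline{H}(t_0,j^{-1})}{d}$; letting $j\to\infty$ and using \eqref{eqn:incredimension} then yields the stated bound. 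Without an anti-concentration input of this type, your energy argument does not close.
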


When $d=1$, inequalities in Theorem \ref{thm:dim} are equalities and we recover the well-known result of \cite{plv95}. Unfortunately, for $d>1$, we have a disparity between the lower and upper bounds for the fractal dimensions. It comes from the estimates that can be made on the probabilities
\begin{equation}\label{eqn:probaintro}
\mathbb{P}(|X_d^{H(\cdot)} (t)-X_d^{H(\cdot)} (u)| \leq x),
\end{equation}
for $t,u ,x \geq 0$, see Proposition \ref{lemma:boundproba} below. It is unknown whether a general (multifractional) Hermite process admits a continuous and bounded density and thus we have to estimate \eqref{eqn:probaintro} with the so-called Carbery-Wright inequality, Lemma \ref{lemma:carbery} in this paper, which induces this factor $\frac{1}{d}$. Nevertheless, for $d=2$, one can use specific arguments from the second order Wiener chaos, \cite[Section 2.4]{MR2962301}. Indeed, if $f$ is a symmetric function in $L^2(\R^2)$, let us consider the Hilbert-Schmidt operator defined as
\[ \mathcal{A}_f \, : \, L^2(\R) \to L^2(\R) \, : \, g \mapsto \int_{\R} f( \cdot,y) g(y) \, dy. \]
Then, let $\{\lambda_{f,j} \}_{j \in \N}$ and $\{e_{f,j}\}_{j \in \N}$ indicate, respectively, the eigenvalues of $\mathcal{A}_f$ and the corresponding eigenvectors. The system $\{e_{f,j} \}_{j \in \N}$ is orthonormal in $L^2(\R)$, the sequence $\{\lambda_{f,j} \}_{j \in \N}$ belongs to $\ell^p$, for all $p \geq 2$, and $f$ has the expansion
\[
f = \sum_{j \in \N} \lambda_{f,j}  e_{f,j} \otimes e_{f,j},
\]
with convergence in $L^2(\R^2)$. In particular, from this last equality, one can write
\begin{equation}\label{expansionchaos2}
I_2(f) = \sum_{j \in \N} \lambda_{f,j} \left(I_1(e_{f,j})^2-1 \right),
\end{equation}
with convergence in $L^2(\Omega)$. Let us also note that the orthonormality of $\{e_{f,j} \}_{j \in \N}$ entails
\begin{equation}\label{eq:norenvp}
\|f\|^2_{L^2(\R^2} =\sum_{j \in \N} \lambda_{f,j}^2.
\end{equation}

In Section \ref{sec:rosen}, we take advantage of this expansion, together with arguments from Malliavin calculus and the paper \cite{MR3132731}, to prove the following improvement of Theorem \ref{thm:dim} in the second order Wiener chaos. We recall that, in this case, the multifractional Hermite process corresponds to the multifractional Rosenblatt process.

\begin{Thm}\label{thm:rose}
Given a compact set $K$ of $(\frac{1}{2},1)$, a Hurst function $H \, : \, \R_+ \to K$ satisfying Condition \ref{condi} (a) and a compact interval $I \subset \R_+$, there exists $\widetilde{\Omega}_2$, an event of probability $1$, such that on $\widetilde{\Omega}_2$, we have
\[  \ha \left( \mathcal{G}_2(I)\right) = \dim_{\mathcal{B}}\left( \mathcal{G}_2(I)\right) = 2-\underline{H}(I).\]
\end{Thm}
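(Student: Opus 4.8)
The plan is to prove Theorem~\ref{thm:rose} by sharpening the Hausdorff dimension lower bound in Theorem~\ref{thm:dim} from $1 + \frac{1 - \underline{H}(I)}{d}$ to $2 - \underline{H}(I)$ in the case $d = 2$, since the upper bound $\overline{\dim}_{\mathcal B}(\mathcal G_2(I)) \le 2 - \underline{H}(I)$ already follows from Theorem~\ref{thm:dim}, and the chain \eqref{ine:dims} then forces all the quantities to coincide (in particular the box-counting dimension exists). So the entire content is a lower bound $\ha(\mathcal G_2(I)) \ge 2 - \underline{H}(I)$ holding on an event of probability $1$.

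The standard route to a Hausdorff dimension lower bound for a graph is the potential-theoretic (energy) method: for $\beta < 2 - \underline{H}(I)$ one shows that the $\beta$-energy
\[
\E \int_I \int_I \frac{dt\, du}{\big( |t-u|^2 + |X_2^{H(\cdot)}(t) - X_2^{H(\cdot)}(u)|^2 \big)^{\beta/2}} < \infty,
\]
which by Frostman's lemma gives $\ha(\mathcal G_2(I)) \ge \beta$ almost surely, and then one lets $\beta \uparrow 2 - \underline{H}(I)$ along a sequence. Reducing this expectation to a usable estimate requires controlling $\E\big[(|t-u|^2 + |X_2^{H(\cdot)}(t) - X_2^{H(\cdot)}(u)|^2)^{-\beta/2}\big]$, and this is exactly where the tail/anti-concentration bound on \eqref{eqn:probaintro} enters. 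In the general $d$ case (Theorem~\ref{thm:dim}) this bound is obtained from the Carbery–Wright inequality (Lemma~\ref{lemma:carbery}) and loses a factor $\tfrac1d$; for $d = 2$ the plan is instead to replace it by a genuine bounded-density estimate for the increment $X_2^{H(\cdot)}(t) - X_2^{H(\cdot)}(u)$, obtained via the chaos-expansion \eqref{expansionchaos2}–\eqref{eq:norenvp}, Malliavin calculus, and the density formula / bounds of \cite{MR3132731}. Concretely, $X_2^{H(\cdot)}(t) - X_2^{H(\cdot)}(u) = I_2(g_{t,u})$ for an explicit symmetric $g_{t,u} \in L^2(\R^2)$ with $\|g_{t,u}\|_{L^2(\R^2)} \asymp |t-u|^{\min\{H(t),H(u)\}}$ by Proposition~\ref{prop:holdofgene}; I would show this random variable has a density $p_{t,u}$ bounded by $C \|g_{t,u}\|_{L^2(\R^2)}^{-1} \lesssim C |t-u|^{-\overline{H}(I)}$, uniformly over $t,u \in I$, using the Malliavin-calculus estimates available in the second chaos (the quantity $\E[\|DI_2(g)\|_H^{-2}]$ or Nourdin–Viens / Nourdin–Peccati type density bounds, and the fact that the eigenvalue sequence $\{\lambda_{g_{t,u},j}\}$ does not degenerate too badly). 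This is the key improvement: a bounded density with the sharp normalization $|t-u|^{-\overline{H}(I)}$, rather than the weaker $|t-u|^{-\overline{H}(I)/d}$-type control from Carbery–Wright.

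With the density bound in hand, the energy estimate becomes routine: for $t,u \in I$ fixed with $|t - u| = r$,
\[
\E\Big[ \big( r^2 + |X_2^{H(\cdot)}(t) - X_2^{H(\cdot)}(u)|^2 \big)^{-\beta/2} \Big]
= \int_\R \frac{p_{t,u}(x)\, dx}{(r^2 + x^2)^{\beta/2}}
\lesssim r^{-\overline{H}(I)} \int_\R \frac{dx}{(r^2 + x^2)^{\beta/2}}
\lesssim r^{-\overline{H}(I)} \cdot r^{1 - \beta},
\]
valid when $\beta > 1$ (so that the $x$-integral converges at infinity and scales like $r^{1-\beta}$). Then the double integral over $I$ is finite provided $\int_I \int_I |t-u|^{1 - \beta - \overline{H}(I)}\, dt\, du < \infty$, i.e.\ $1 - \beta - \overline{H}(I) > -1$, i.e.\ $\beta < 2 - \overline{H}(I)$. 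Here I would be careful: I actually want $\beta$ up to $2 - \underline{H}(I)$, not $2 - \overline{H}(I)$, so the argument must be localized — one covers $I$ by finitely many subintervals $I_k$ on each of which $\overline H(I_k)$ is as close as desired to $\underline H(I)$ at the relevant point (this is possible because $\ha(\mathcal G_2(I)) = \max_k \ha(\mathcal G_2(I_k))$ by \eqref{eqn:incredimension}, and $\underline H(I) = H(t^*)$ for some $t^* \in I$), and runs the energy estimate on a small interval around $t^*$ where $\overline H$ is within $\epsilon$ of $\underline H(I)$, using Condition~\ref{condi}~(a) or the continuity of $H$ to make this quantitative. Letting $\epsilon \downarrow 0$ and $\beta \uparrow 2 - \underline H(I)$ along countable sequences gives the bound on a single event of probability $1$.

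The main obstacle I anticipate is establishing the uniform bounded-density estimate $\sup_{t,u \in I,\, t \ne u} |t-u|^{\overline H(I)} \|p_{t,u}\|_\infty < \infty$ with the correct power of $|t-u|$. The existence of a density for a non-trivial second-chaos random variable is classical, and \cite{MR3132731} provides density formulas/bounds, but extracting a bound that is uniform in $(t,u)$ and scales exactly like $\|g_{t,u}\|_{L^2}^{-1}$ requires controlling the Malliavin-derivative norm $\|D I_2(g_{t,u})\|$ and its negative moments uniformly — equivalently, controlling how close the operator $\mathcal A_{g_{t,u}}$ comes to being finite-rank or to having a vanishing dominant eigenvalue, uniformly over $t,u \in I$. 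One must rule out, uniformly, degeneracies such as $\lambda_{g_{t,u},1}$ becoming comparable to $\|g_{t,u}\|_{L^2}$ in a way that makes $I_2(g_{t,u})$ nearly a shifted chi-square with a heavy one-sided mode. I expect this to be handled by a compactness argument on the rescaled kernels $g_{t,u}/\|g_{t,u}\|_{L^2}$ together with the fact (Proposition~\ref{prop:holdofgene}) that $\|g_{t,u}\|_{L^2}$ behaves like a clean power of $|t-u|$, so the rescaled increments converge (after further rescaling in $t$) to increments of a genuine Rosenblatt process, whose increments are known to have bounded densities — but making the bound uniform rather than merely pointwise is the delicate step, and is presumably where the arguments from \cite{MR3132731} are invoked in an essential way.
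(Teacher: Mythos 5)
Your overall strategy is the same as the paper's: keep the upper bound from Theorem \ref{thm:dim}, and upgrade the lower bound by replacing the Carbery--Wright estimate with a bounded-density estimate for the second-chaos increment with the sharp scaling $|t-u|^{-\overline{H}(t_0,\varepsilon)}$, then run the energy method localized near a point $t_0$ where $H(t_0)=\underline{H}(I)$ and let the exponent increase along a countable sequence. This is exactly the structure of Proposition \ref{lemma:boundprobaorder2} and the paper's proof of Theorem \ref{thm:rose}. However, the decisive step --- the uniform density bound $\sup_x |f_{F}(x)|\lesssim |t-u|^{-\overline{H}(t_0,\varepsilon)}$ for $F=X_2^{H(\cdot)}(t)-X_2^{H(\cdot)}(u)$ --- is precisely what you leave open (you call it ``the main obstacle I anticipate''), and the compactness argument you gesture at is not carried out; as stated it would give pointwise, not uniform-near-the-diagonal, control, and it contains no quantitative non-degeneracy input, which is the whole difficulty. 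The paper resolves this concretely: writing $F=I_2(f_{t,u}^{H(\cdot)})$, it compares with the constant-Hurst kernel $f_{t,u}^{H(t)}$, whose Hilbert--Schmidt operator has eigenvalues $|t-u|^{H(t)}\lambda_j$ by self-similarity, invokes the proof of \cite[Theorem 3.1]{MR2768856} to guarantee $\lambda_3\neq 0$, and controls the perturbation through $\|f_{t,u}^{H(\cdot)}-f_{t,u}^{H(t)}\|_{L^2(\R^2)}\lesssim |H(t)-H(u)|$, which under Condition \ref{condi} (a) is of strictly smaller order than $|t-u|^{\overline{H}(t_0,\varepsilon)}$ for $t,u$ near $t_0$; hence the third eigenvalue of the perturbed operator is at least $\tfrac{|\lambda_3|}{2}|t-u|^{\overline{H}(t_0,\varepsilon)}$. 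Only then can Lemma \ref{lemma:nualart2} (with $N=3$ and $r\in(1,\tfrac32)$) bound $\E[\|DF\|_{L^2(\R)}^{-2r}]$ uniformly, and Lemma \ref{lemma:nualart1} together with the hypercontractive equivalence of the $\|\cdot\|_{2,s}$ norms yields the density bound. Without some such explicit lower bound on (at least three) rescaled eigenvalues, uniform in $t,u$ near $t_0$, your energy computation has no valid input, so the proposal has a genuine gap at its central step.

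Two smaller points. First, your claim $\|g_{t,u}\|_{L^2(\R^2)}\asymp|t-u|^{\min\{H(t),H(u)\}}$ does not follow from Proposition \ref{prop:holdofgene} globally on $I$: the lower bound there carries the term $-c_2|H(t)-H(u)|$, and it only becomes a genuine two-sided estimate after the same localization near $t_0$ using Condition \ref{condi} (a); since you localize anyway, this is repairable but should be invoked where the density bound is derived, not before. Second, your energy computation and the passage $\beta\uparrow 2-\underline{H}(I)$ via shrinking neighbourhoods of $t_0$ and countable intersections match the paper's proof of Theorem \ref{thm:dim} (with the factor $\tfrac1d$ removed), so once the uniform density estimate is supplied the rest of your argument goes through as written.
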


Note that this disparity of results also appeared in the standard case, where the Hurst function is constant, see \cite{MR3192502}. We conjecture that, in fact, the equality holds for any (multifractional) Hermite processes. A strategy to prove this fact would be to show that (multifractional) Hermite processes admit continuous and bounded densities. It is still an open question which goes far beyond the scope of this paper.

\section{Uniform modulus of continuity}\label{sect:modulus}

Let us now focus on the continuity and regularity of multifractional Hermite processes. Let us first remark that, on the event $\Omega^*$ induced by Proposition \ref{prop:holdereggene}, $X_d^{H(\cdot)}$ is always continuous at $0$. Indeed, if $\omega \in \Omega^*$ is fixed and $(t_j)_j$ is a sequence which converges to $0$, then let us consider a subsequence $(t_{k(j)})_j$ of $(t_j)_j$. As $H$ has a compact image, there is a subsequence $(t_{l(k(j)}))_j$ such that $H(t_{l(k(j))}) \to \widetilde{H}_0$, for some $\widetilde{H}_0 \in K$. Then, inequality \eqref{ineg:holdereggene} entails
\[X_d^{H(\cdot)}(\omega,t_{l(k(j))})=X_d(\omega,t_{l(k(j))},H(t_{l(k(j))})) \to X_d(\omega,0,\widetilde{H}_0)=0=X_d^{H(\cdot)}(\omega,0). \]
Thus, any subsequence of $(X_d^{H(\cdot)}(\omega,t_{j}))_j$ has a subsequence which converges to $0$, which means that $(X_d^{H(\cdot)}(\omega,t_{j}))_j$ also converge to $0$.

Of course, if $H$ is a continuous function, \eqref{eqn:liengenher} and \eqref{ineg:holdereggene} imply that, on $\Omega^*$, $X_d^{H(\cdot)}$ is continuous on $\R_+$. At the opposite, if $H$ is discontinuous at a point $t_0 \neq 0$, using again the fact that the image of $H$ is compact, we know that there exists $(t_j)_j$ such that $t_j \to t_0$ and $t_j \to H_0 \neq H(t_0)$. Then, from the isometry property \eqref{eqn:isometry} for Wiener-Itô integrals, we get
\begin{align*}
& \|X_d(t_0,H_0)-X_d(t_0,H(t_0))\|_{L^2(\Omega)} \nonumber \\
& \, = d! \left( \int_{\R^d} \left( \int_0^{t_0} f_{H_0}(s, \mathbf{x})-f_{H(t_0)}(s, \mathbf{x}) \, ds \right)^2 d\mathbf{x}\right)^\frac{1}{2}.\\
& \, = d! \left( \int_{\R^d} \left( \int_0^{t_0} \prod_{\ell=1}^d(s-x_\ell)_+^{\frac{H(t_0)-1}{d}-\frac{1}{2}} \left( \prod_{\ell=1}^d(s-x_\ell)_+^{\frac{H_0-H(t_0)}{d}}-1\right) \, ds \right)^2 d\mathbf{x}\right)^\frac{1}{2} \\
& \,>0.
\end{align*}
It means that one can find an event $\Omega_{t_0}$ of probability $1$ such that, for all $\omega \in \Omega_{t_0}$, $X_d^{H(\cdot)}$ is discontinuous at $t_0$.

From this discussion, we see that in order to insure the almost sure continuity of multifractional Hermite processes, we have to assume that the Hurst function is continuous. 

Under Condition \ref{condi} (a), by Proposition \ref{prop:holdereggene}, one easily see that, almost surely, for all compact interval $I$ of $\R_+$,  $X_d^{H(\cdot)}$ is Hölder continuous on $I$, with Hölder exponent at least $\underline{H}(I)$. 

Here, we aim at giving a more precise result by providing an almost sure uniform modulus of continuity for $X_d^{H(\cdot)}$. On this purpose, let us recall the following important fact, see for instance \cite[Theorem 6.7]{MR1474726}.

\begin{Lemma}\label{thm:jason}
For all $d \geq 1$, there exists an universal deterministic constant $c_d>0$ such that, for any random variable $X$ in the Wiener chaos of order $d$,  and $y \geq 2$,
\[ \mathbb{P}(|X| \geq y \|X \|_{L^2(\Omega)}) \leq \exp(-c_d y^\frac{2}{d}).\]
\end{Lemma}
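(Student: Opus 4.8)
The plan is to deduce the tail bound from hypercontractivity on Wiener chaos, the same ingredient already used for Corollary~\ref{cor:lpnorm}. Recall that if $X$ belongs to the $d$th Wiener chaos, then for every $p \geq 2$ one has $\|X\|_{L^p(\Omega)} \leq (p-1)^{d/2}\|X\|_{L^2(\Omega)} \leq p^{d/2}\|X\|_{L^2(\Omega)}$ (see \cite[Theorem 2.7.2]{MR2962301}). By homogeneity we may assume $\|X\|_{L^2(\Omega)} = 1$. Applying Markov's inequality to $|X|^p$ then gives, for every $p \geq 2$ and $y > 0$,
\[
\mathbb{P}(|X| \geq y) \leq y^{-p}\,\mathbb{E}[|X|^p] = y^{-p}\|X\|_{L^p(\Omega)}^p \leq \exp\!\big(-p\log y + \tfrac{d}{2}\,p\log p\big).
\]

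The next step is to optimize the exponent over $p$. The choice $p = e^{-1}y^{2/d}$ yields $\log p = \tfrac{2}{d}\log y - 1$, hence $-p\log y + \tfrac{d}{2}p\log p = -\tfrac{d}{2}p = -\tfrac{d}{2e}\,y^{2/d}$, so that $\mathbb{P}(|X| \geq y) \leq \exp(-\tfrac{d}{2e}y^{2/d})$. This is legitimate exactly when $p \geq 2$, i.e. when $y \geq (2e)^{d/2}$. For the remaining range $2 \leq y < (2e)^{d/2}$ one uses instead Chebyshev's inequality, $\mathbb{P}(|X| \geq y) \leq y^{-2} = \exp(-2\log y)$, and observes that $y \mapsto \tfrac{2\log y}{y^{2/d}}$ is continuous and strictly positive on the compact interval $[2,(2e)^{d/2}]$, hence bounded below by some $\kappa_d > 0$; this gives $\mathbb{P}(|X| \geq y) \leq \exp(-\kappa_d y^{2/d})$ there. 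Setting $c_d := \min\{\tfrac{d}{2e},\kappa_d\}$ and replacing $y$ by $y/\|X\|_{L^2(\Omega)}$ to undo the normalization yields the claim for all $y \geq 2$.

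There is no real obstacle here: the only substantive input is hypercontractivity, a classical property of Wiener chaos, and everything else is an elementary optimization together with a routine patching of the small-$y$ range to absorb the constants. As an alternative, the statement can simply be quoted from \cite[Theorem 6.7]{MR1474726}, but the argument above is self-contained modulo the hypercontractive estimate already invoked elsewhere in the paper.
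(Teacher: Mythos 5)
Your proof is correct: the hypercontractive bound $\|X\|_{L^p(\Omega)}\leq (p-1)^{d/2}\|X\|_{L^2(\Omega)}$, Markov's inequality, and the choice $p=e^{-1}y^{2/d}$ (which is indeed the exact minimizer of the exponent) give the Gaussian-chaos tail estimate for $y\geq (2e)^{d/2}$, and the Chebyshev patch on the compact range $2\leq y\leq (2e)^{d/2}$ legitimately absorbs the constant. The paper itself does not prove this lemma but simply quotes it from \cite[Theorem 6.7]{MR1474726}, and your argument is essentially the standard proof underlying that citation (the same hypercontractivity input the paper already uses via \cite[Theorem 2.7.2]{MR2962301}), so there is nothing substantive to compare; as written it is a valid self-contained replacement for the citation.
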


Let us introduce some notations. For all $j \in \N$ and $t \in \R_+$, $k_j^-(t)$ is the unique positive integer such that $t \in [k_j^{-}(t)2^{-j},(k_j^-(t)+1)2^{-j})$ and we set $k_j^+(t):=k_j^-(t)+1$. Let us remark that, for all $t \in \R_+$, $k_j^-(t), k_j^+(t) \to t$ as $j \to + \infty$. Also, note that, for all $j \in \N$, 
\begin{align*}
 \{k_{j+1}^-(t),k_{j+1}^+(t)\} &\subset \{2k_j^-(t),2k_j^-(t)+1,2k_j^-(t)+2\} \\ & =\{2k_j^+(t),2k_j^+(t)-1,2k_j^+(t)-2\}. 
\end{align*}

\begin{proof}[Proof of Theorem \ref{thm:modulusofcontinuity}]
For all $(j,k) \in \N^2$, we write $X_{j,k}:=X_d^{H(\cdot)}(k2^{-j})$ Let us fix $n \in \N$ and, if $c_d$ is the constant given by Lemma \ref{thm:jason}, $c> \left(\frac{\ln(2)}{c_d} \right)^{\frac{d}{2}}$. For all $j \in \N$, let us consider the event $A_j$ defined by
\[\left(\exists 0 \leq k \leq n 2^j, k' \in \, \{2k,2k \pm 1,2k \pm 2\}  :  \frac{|X_{j+1,k'}-X_{j,k}|}{\|X_{j+1,k'}-X_{j,k}\|_{L^2(\Omega)} } \geq  c j^\frac{d}{2} \right). \]
 If $j$ is sufficiently large, by Lemma \ref{thm:jason}, we have $\mathbb{P}\left(A_j \right) \leq 5 n 2^{j} \exp(- c_d c^{\frac{2}{d}} j)$.
Thus, as $c> \left(\frac{\ln(2)}{c_d} \right)^{\frac{d}{2}}$, we have $ \sum_{j=0}^{+ \infty} \mathbb{P} \left(A_j \right) < \infty$. Borel-Cantelli Lemma entails the existence of $\Omega_{n,1}$, an event of probability $1$, such that, on $\Omega_{n,1}$, there exists $J_1 \in \N$ for which, for all $j \geq J_1$ and for all $0 \leq k \leq n 2^j$,  $k' \in \, \{2k,2k \pm 1,2k \pm 2\}$,
\begin{equation}\label{eqn:demouniformmodulus}
|X_{j+1,k'}-X_{j,k}| \leq  c j^\frac{d}{2} \|X_{j+1,k'}-X_{j,k}\|_{L^2(\Omega)}.
\end{equation}

Now, let us fix a compact interval $I \subseteq [0,n]$. There exists $J_2 \in \N$ such that, for all $j \geq J_2$, $2 2^{-j} \leq \diam(I)$. In particular, it means that, for all $t \in I$ and for all $j \geq J_2$, $k_j^-(t) \in I$ or $k_j(t)^+ \in I$. In the sequel, for all such $t$ and $j$, we choose $k_j(t) \in \{k_j^-(t),k_j^+(t) \}$ such that $k_{j}(t) \in I$. On the event $\Omega^*$ given by Proposition \ref{prop:holdereggene}, we can write, for all $j_0 \geq J_2$
\[ X_d^{H(\cdot)}(t) = X_{j_0,k_{j_0}(t)} + \sum_{j \geq j_0} (X_{j+1,k_{j+1}(t)} -X_{j,k_{j}(t)} ).\]

Therefore, on the event $\Omega_n=\Omega_{n,1} \cap \Omega^*$ of probability $1$, if $s,t \in I$ are such that $2^{-(j_0+1)} \leq |s-t| \leq 2^{-j_0}$ for some $j_0 \geq \max \{J_1,J_2\}$, we write
\begin{align}
| X_d^{H(\cdot)}(t)-X_d^{H(\cdot)}(s)| &\leq |X_{j_0,k_{j_0}(t)}-X_{j_0,k_{j_0}(s)} | \nonumber\\
& \quad + \sum_{j \geq j_0} |X_{j+1,k_{j+1}(t)} -X_{j,k_{j}(t)} | \nonumber\\
& \quad +\sum_{j \geq j_0} |X_{j+1,k_{j+1}(s)} -X_{j,k_{j}(s)} |. \label{astuce}
\end{align}
From Proposition \ref{prop:holdofgene} and inequality \eqref{eqn:demouniformmodulus}, as $H$ satisfies Condition \ref{condi} (a), there is a constant $c_1$, only depending on $d$, $I$, $K$ and $c$ such that, for all $j \geq j_0$,
\[ \max \{|X_{j+1,k_{j+1}(t)} -X_{j,k_{j}(t)} |,|X_{j+1,k_{j+1}(s)} -X_{j,k_{j}(s)}| \} \leq c_1 j^{\frac{d}{2}} 2^{-\underline{H}(I)(j+1)}.\]
Also, as $|k_{J}(t)-k_{J}(s)| \leq 2 2^{-j_0}+ 2^{-j_0} \leq 2^{-j_0+2}$, we have, still from Proposition \ref{prop:holdofgene} and inequality \eqref{eqn:demouniformmodulus}, 
\[  |X_{J,k_{J}(t)}-X_{J,k_{J}(s)} | \leq c_1 j_0^{\frac{d}{2}} 2^{-\underline{H}(I)(j_0+2)}.\]
In total, we get the existence of a constant $c_2$, only depending on $d$, $I$, $K$ and $c$, such that, for all $s,t \in I$ with $2^{-(j_0+1)} \leq |s-t| \leq 2^{-j_0}$
\[ |X_d^{H(\cdot)}(t)-X_d^{H(\cdot)}(s) | \leq c_2 j_0^{\frac{d}{2}} 2^{-\underline{H}(I)j_0}  \leq c_2 |\log |s-t||^\frac{d}{2} |s-t|^{\underline{H}(I)}. \]
The conclusion follows by taking $\Omega^*_1 = \bigcap_n \Omega_n$.
\end{proof}

\section{Pointwise Hölder exponent}\label{sec:pointwiseholder}

Now, we want to show that the pointwise regularity of the process is governed by the Hurst function. Under Condition \ref{condi} (b), it is easy to show, with the help of Proposition \ref{prop:holdereggene}, that, almost surely, for any $t_0 \in \R_+$,
\[ h_{X_d^{H(\cdot)}} (t_0 ) \geq H(t_0).\]
To show the reverse inequality, we will prove the following Theorem.

\begin{Thm}\label{thm:lowerbound}
Given $d \in \N^*$, a compact set $K$ of $(\frac{1}{2},1)$ and a Hurst function $H \, : \, \R_+ \to K$ satisfying Condition \ref{condi} (b), there exists $\underline{\Omega}$, an event of probability $1$, such that, on $\underline{\Omega}$, for all $t_0 \in \R_+$, 
\begin{equation}\label{eq:limsuppos}
\limsup_{r \to 0^+}\frac{ \Osc(X_d^{H(\cdot)},[t_0-r,t_0+r] \cap \R_+ )}{r^{H(t_0)} (\log r^{-1})^\frac{-d^2 H(t_0)}{2(1-H(t_0))}} >0
\end{equation}
\end{Thm}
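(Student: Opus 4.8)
The goal is a lower bound for the oscillation of the multifractional Hermite process near an arbitrary point $t_0$, showing it is not much smaller than $r^{H(t_0)}$ (with only a logarithmic correction). The natural strategy is to compare $X_d^{H(\cdot)}$ with the \emph{generator frozen at} $h_0:=H(t_0)$, namely the genuine Hermite process $t\mapsto X_d(t,h_0)$, whose oscillations we can control from below by a classical small-ball / law-of-iterated-logarithm type estimate, and then absorb the error coming from the variation of $H$ near $t_0$ using Proposition \ref{prop:holdofgene} together with Condition \ref{condi} (b).

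First I would fix a dyadic scale. For $t_0\in\R_+$ and $j\in\N$, consider the two dyadic points $k_j^-(t_0)2^{-j}$ and $(k_j^-(t_0)+1)2^{-j}$ (or a nearby pair at distance $2^{-j}$), and write $D_j(t_0):=X_d\big((k_j^-(t_0)+1)2^{-j},h_0\big)-X_d\big(k_j^-(t_0)2^{-j},h_0\big)$. By stationarity and self-similarity of the Hermite process of Hurst parameter $h_0$, $\|D_j(t_0)\|_{L^2(\Omega)}=c_d\,2^{-jh_0}$, and $D_j(t_0)$ lies in the $d$-th Wiener chaos. The key probabilistic input is a \emph{lower} tail bound: there is $c>0$ (depending only on $d$) such that for $\lambda$ small, $\mathbb P(|D_j(t_0)|\le \lambda\,2^{-jh_0})$ decays fast enough in $\lambda$ — this is the small-deviation estimate for chaos variables, the counterpart of Lemma \ref{thm:jason}; for $d=1$ it is Gaussian small ball, and in general one can use the Carbery–Wright inequality (Lemma \ref{lemma:carbery}, invoked later in the paper) which gives $\mathbb P(|D_j(t_0)|\le \lambda\,2^{-jh_0})\lesssim \lambda^{1/d}$. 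I then choose $\lambda=\lambda_j$ of the form $(\text{const})\,j^{-\theta}$ with $\theta$ tuned so that $\sum_j \mathbb P(|D_j(t_0)|\le \lambda_j 2^{-jh_0})<\infty$ \emph{after} a union bound over all dyadic $t_0$ in a fixed compact; comparing the exponent one sees that $\theta$ must match the power $\frac{d^2 h_0}{2(1-h_0)}$ appearing in the statement (this is where that precise logarithmic exponent comes from: $j^{-\theta}=(\log r^{-1})^{-\theta}$ after $r\sim 2^{-j}$, and the Borel–Cantelli budget forces $\theta\cdot\frac{2}{d}\cdot\frac{1-h_0}{h_0}\cdot(\text{something})>1$). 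Borel–Cantelli then produces an event $\underline\Omega$ and, for each $t_0$, a sequence $r=r_j\to 0$ along which $|D_j(t_0)|\ge \lambda_j\,2^{-jh_0}$.

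Next I would transfer this lower bound from the frozen process to $X_d^{H(\cdot)}$. Write
\[
X_d^{H(\cdot)}(v)-X_d^{H(\cdot)}(u) = \big(X_d(v,h_0)-X_d(u,h_0)\big) + \big(X_d(v,H(v))-X_d(v,h_0)\big) - \big(X_d(u,H(u))-X_d(u,h_0)\big),
\]
for $u,v$ the two dyadic points above. By Proposition \ref{prop:holdofgene} (or rather its almost-sure continuous version, Proposition \ref{prop:holdereggene}), the last two terms are $O\big((|H(v)-h_0|+|H(u)-h_0|)^a\big)$ up to a power, and by Condition \ref{condi} (b), $|H(v)-H(t_0)|\le C|v-t_0|^\gamma\lesssim 2^{-j\gamma}$ with $\gamma>h_0$; hence the error terms are $o(2^{-jh_0})$, in fact $O(2^{-j\gamma'})$ for some $\gamma'>h_0$, which is negligible compared with $\lambda_j 2^{-jh_0}$ since $\lambda_j$ decays only polynomially in $j$. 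Therefore $\Osc(X_d^{H(\cdot)},[t_0-r_j,t_0+r_j]\cap\R_+)\ge \tfrac12\lambda_j 2^{-jh_0}\gtrsim r_j^{h_0}(\log r_j^{-1})^{-\theta}$, which is exactly \eqref{eq:limsuppos}.

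\textbf{Main obstacle.} The delicate point is the small-deviation (lower tail) estimate for the chaos increment $D_j(t_0)$ and getting the union bound over \emph{all} dyadic points $t_0$ simultaneously — one has roughly $n2^j$ points at generation $j$, so the per-point probability must beat $2^{-j}$ with room to spare, and this is precisely what pins down the exponent $\frac{d^2H(t_0)}{2(1-H(t_0))}$. For $d=1$ this is routine Gaussian analysis; for general $d$ one must be careful that the Carbery–Wright bound $\lambda^{1/d}$ (rather than something exponentially small) is what dictates the form of the logarithmic correction, and one must check the constants depend only on $d$, not on $t_0$, so that a single event $\underline\Omega$ works for every $t_0$. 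A secondary technical nuisance is handling points $t_0$ near the boundary $0$, where $[t_0-r,t_0+r]\cap\R_+$ is one-sided, but the dyadic pair construction accommodates this automatically.
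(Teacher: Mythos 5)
Your overall architecture (compare with the process frozen at $h_0=H(t_0)$, get a probabilistic lower bound for dyadic increments, absorb the Hurst-variation error via Proposition \ref{prop:holdofgene} and Condition \ref{condi} (b)) is reasonable in spirit, and the error-absorption step is essentially what the paper does with the term $\widehat{\Delta}_{j,k}$. But the probabilistic core of your argument does not work. With Carbery--Wright the anti-concentration bound is only $\mathbb{P}\bigl(|D_j(t_0)|\le \lambda\,2^{-jh_0}\bigr)\lesssim \lambda^{1/d}$, which is \emph{polynomial} in $\lambda$; if you take $\lambda_j$ of size $j^{-\theta}$, the per-point probability is of order $j^{-\theta/d}$, and a union bound over the $\sim n2^j$ dyadic positions at scale $j$ gives $\sim 2^j j^{-\theta/d}$, which is never summable, for any $\theta$. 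The only way to ``beat $2^{-j}$'' with this inequality is to take $\lambda_j$ exponentially small in $j$, which yields a lower bound of order $2^{-j(h_0+\mathrm{const})}$ and destroys the statement. So the Borel--Cantelli budget you invoke cannot be balanced, and, relatedly, your explanation of where the exponent $\frac{d^2H(t_0)}{2(1-H(t_0))}$ comes from is not correct: it is not produced by a union-bound computation.

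What the paper does instead is qualitatively different and is the missing idea. The increment is split as $\Delta_{j,k}=\widetilde{\Delta}^{M}_{j,k}+\widecheck{\Delta}^{M}_{j,k}+\widehat{\Delta}_{j,k}$, where the dominant part $\widetilde{\Delta}^{M}_{j,k}$ is a Wiener--It\^o integral localized on an enlarged dyadic cube $\lambda_{j,k}^{M}$, so that dominant variables attached to disjoint enlarged intervals are \emph{independent}. The anti-concentration input is Lemma \ref{lemma:janson}, $\mathbb{P}(|X|\le \tfrac12\|X\|_{L^2(\Omega)})\le\gamma_d<1$, with a universal constant; by itself this is far too weak to survive a union bound, but independence lets one multiply it: for each of the $2^m$ dyadic descendants $S$ of $\lambda_{j,k}$ at scale $j+m$ the paper builds a chain of $m\ell_d$ pairwise independent dominant variables along the path to $S$, so the probability that \emph{all} of them are small is at most $\gamma_d^{m\ell_d}<2^{-m}$, and $\mathbb{E}[\mathcal{G}_{j,k,m}]\le(2\gamma_d^{\ell_d})^m\to0$; Fatou then gives, almost surely and simultaneously for every $t_0$ in $\lambda_{j,k}$, infinitely many scales at which some nearby dominant variable is not small. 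This is how ``for all $t_0$'' (rather than almost every $t_0$) is achieved — a point your scheme cannot reach. Finally, the logarithmic exponent $-\frac{d^2H(t_0)}{2(1-H(t_0))}$ arises because the localization width must be taken $M_\lambda\asymp(j^{d/2})^{d/(1-H)}$ to make $\widecheck{\Delta}^{M}$ negligible against the $j^{d/2}$ upper-tail bounds from Lemma \ref{thm:jason}, and the dominant variable then lives at the deeper scale $j'\approx j+\log_2(\ell_d M_\lambda)$, so its size $2^{-j'H}\approx M_\lambda^{-H}2^{-jH}=j^{-\frac{d^2H}{2(1-H)}}2^{-jH}$ — i.e., the log loss is the price of localization, not of a Borel--Cantelli count. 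You would need to import this decomposition-plus-independence mechanism (or an equivalent one) for your proof to go through.
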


On this purpose, we use a generalization of a combination of previous ideas from the papers \cite{MR4108464,MR3922488,dawloosveldt}. First, remark that, similarly to the proof of Theorem \ref{thm:modulusofcontinuity}, it suffices to show that, for all $n \in \N$, there is $\underline{\Omega}_n$, an event of probability $1$, such that, on $\underline{\Omega}_n$, for all $t_0 \in [n,n+1]$, \eqref{eq:limsuppos} holds. The conclusion comes by taking $\underline{\Omega} = \bigcap_{n \in \N} \underline{\Omega}_n$. For the sake simpleness in notation, we prove this result for $n=0$.

Let us fix some notations. For all $(j,k) \in \N^2$, $\lambda_{j,k}$ stands for the dyadic interval of scale $j$ explicitly given by $[k2^{-j},(k+1)2^{-j})$, $\Lambda_j$ is the set of all dyadic intervals of scale $j$ and $\Lambda := \bigcup_j \Lambda_j$. If $\lambda=\lambda_{j,k} \in \Lambda_j$, $3\lambda$ is the set $\{ \lambda_{j,k-1},\lambda_{j,k}, \lambda_{j,k+1} \}$.  Finally, for all $j \in \N$ and $t_0 \in \R_+$, $\lambda_j(t_0)$ is the unique interval in $\Lambda_j$ such that $t_0 \in \lambda_j(t_0)$.  If $x \in [0,1]$ and $\lambda \in \Lambda$, we allow ourself to write $x \in 3 \lambda$ to state that there exists $\lambda' \in 3 \lambda$ for which $x \in \lambda'$. Similarly, $\lambda'' \subseteq 3 \lambda$ means that there exists  $\lambda' \in 3 \lambda$ for which $\lambda'' \subseteq \lambda'$.

For all $(j,k) \in \N \times \{0,\ldots,2^{j}-1\}$, let us set
\begin{align}
\Delta_{j,k}& :=X_d^{H(\cdot)}\left( \frac{k+1}{2^j} \right) - X_d^{H(\cdot)} \left( \frac{k}{2^j} \right)  \nonumber \\
 &= X_d \left( \frac{k+1}{2^j},H\left( \frac{k+1}{2^j} \right) \right) - X_d \left( \frac{k}{2^j},H\left( \frac{k}{2^j} \right) \right) . \nonumber
\end{align}
If $\lambda=\lambda_{j,k}$, we also write $\Delta_{\lambda}$ for $\Delta_{j,k}$. It is clear that, for all $j \in \N$,
\begin{equation}\label{eqn:defleader}
\sup_{\lambda \subseteq 3\lambda_j(t)} |\Delta_{\lambda}| \leq \Osc(X_d^{H(\cdot)},[t-22^{-j},t+22^{-j}] ).
\end{equation}
Recalling \eqref{eqn:def:gene} and the notation \eqref{nota:kernel}, we write
\begin{align*}
&  X_d^{H(\cdot)}\left( \frac{k+1}{2^j},H \left( \frac{k+1}{2^j}\right) \right) - X_d \left( \frac{k}{2^j},H\left( \frac{k+1}{2^j} \right)  \right) \nonumber \\
& \quad =I_d  \left(\int_{\frac{k}{2^j}}^{\frac{k+1}{2^j}} f_{H\left( \frac{k+1}{2^j} \right)}(s,\bullet) \, ds \right)  \nonumber \\
& \quad =I_d  \left(\mathbbm{1}_{(-\infty, \frac{k+1}{2^j}]^d}  \left(\int_{\frac{k}{2^j}}^{\frac{k+1}{2^j}} f_{H\left( \frac{k+1}{2^j} \right)}(s,\bullet) \, ds \right) \right)  \nonumber 
\end{align*}
since, as long as $s \in [\frac{k}{2^j},\frac{k+1}{2^j}]$, $f_{H\left( \frac{y+k}{2^j} \right)}(s,\mathbf{x})$ vanishes whenever $\mathbf{x} \notin (-\infty, \frac{k+1}{2^j}]^d$. The brilliant idea from \cite{MR4108464} is then to split this last integral in two parts, where one is ``negligible'' compared to the other one which enjoys some independence property.

\begin{Def}\label{def:chaptilde}
Given an integer $M \geq 0$, for all $(j,k) \in \N \times \{0,\ldots,2^{j}-1\}$, we consider the enlarged dyadic interval
\[\lambda_{j,k}^{M} :=\left( \frac{k-M}{2^j}, \frac{k+1}{2^j} \right]^d\]
and the random variables
\begin{equation}\label{def:variabledominante}
\widetilde{\Delta_{j,k}}^M  := I_d \left( \mathbbm{1}_{\lambda_{j,k}^{M}} \left(\int_{\frac{k}{2^j}}^{\frac{k+1}{2^j}} f_{H\left( \frac{k+1}{2^j} \right)}(s,\bullet) \, ds \right)  \right)
\end{equation}
and 
\[
    \widecheck{\Delta_{j,k}}^M  := I_d \left(\mathbbm{1}_{(-\infty, \frac{k+1}{2^j}]^d \setminus \lambda_{j,k}^{M}} \left(\int_{\frac{k}{2^j}}^{\frac{k+1}{2^j}} f_{H\left( \frac{k+1}{2^j} \right)}(s,\bullet) \, ds \right)  \right)
\]
\end{Def}
For all $(j,k) \in \N \times \{0,\ldots,2^{j}-1\}$, if we also set
\[ \widehat{\Delta_{j,k}}:=X_d \left( \frac{k}{2^j},H\left( \frac{k+1}{2^j} \right) \right)-X_d \left( \frac{k}{2^j},H\left( \frac{k}{2^j} \right) \right),\]
given $M \geq 0$, of course,  we have
\[ \Delta_{j,k} =  \widetilde{\Delta_{j,k}}^M + \widecheck{\Delta_{j,k}}^M  + \widehat{\Delta_{j,k}}.\]
Moreover, from the definition of Wiener-Itô integrals, we know that $\widetilde{\Delta_{j,k}}^M $ is measurable with respect to the $\sigma$-algebra
\[ \sigma(\{ B(t_2)-B(t_1) \, : \, t_1,t_2 \in \lambda_{j,k}^{M} \}), \]
see \cite[Lemma 2.1]{MR4108464}. Thus, if $M_1,\ldots,M_n$ are fixed positive real numbers, the random variables $\widetilde{\Delta_{j_1,k_1}}^{M_1}, \dots, \widetilde{\Delta_{j_n,k_n}}^{M_n}$ are independent as soon as the condition 
\begin{align}
\label{cond:indep}
     \ \lambda_{j_\ell,k_\ell}^{M_\ell} \cap  \lambda_{j_{\ell'},k_{\ell'}}^{M_{\ell'}} = \emptyset \mbox{ for all } 1 \leq \ell,\ell' \leq n
    \end{align}
is satisfied. 

Let us now give some lower and upper bounds for the norm in $L^2(\Omega)$ of these random variables. The following proposition is inspired by \cite[Lemmata 2.2 and 2.3;]{MR4108464} where the main modifications come from the fact that we are working here with a Hurst function instead of a constant Hurst parameter. 

\begin{Prop}\label{prop:l2norm}
Given $d \in \N^*$, a compact set $K$ of $(\frac{1}{2},1)$ and a Hurst function $H \, : \, \R_+ \to K$, there exists a positive deterministic constant $c$, only depending on $d$ and $K$, such that, for all $(j,k) \in \N \times \{0,\ldots,2^{j}-1\}$ and $M >0$, one has
\begin{enumerate}
\item $ c^{-1} 2^{- H\left( \frac{k+1}{2^j} \right) j}\leq \| \widetilde{\Delta_{j,k}}^M \|_{L^2(\Omega)} \leq c  2^{- H\left( \frac{k+1}{2^j} \right) j}$;
\item $\| \widecheck{\Delta_{j,k}}^M \|_{L^2(\Omega)} \leq c M^\frac{H\left( \frac{k+1}{2^j} \right)-1}{d} 2^{- H\left( \frac{k+1}{2^j} \right) j} $;
\item $\| \widehat{\Delta_{j,k}}\|_{L^2(\Omega)} \leq c \Osc(H,\lambda_{j,k})$.
\end{enumerate}
\end{Prop}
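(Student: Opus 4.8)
The plan is to compute everything via the isometry property \eqref{eqn:isometry}, which reduces each of the three $L^2(\Omega)$-norms to an explicit $L^2(\R^d)$-norm of a product kernel, and then to estimate the resulting $d$-fold integral. Throughout, write $h_{j,k}:=H\!\left(\frac{k+1}{2^j}\right)\in K=[a,b]\subset(\frac12,1)$, and recall from \eqref{nota:kernel} that the kernel factorizes over coordinates. For item (1), I would first use the substitution $s=2^{-j}(k+v)$, $x_\ell=2^{-j}(k+1-y_\ell)$ inside
\[
\widetilde{\Delta_{j,k}}^M=I_d\!\left(\mathbbm{1}_{\lambda_{j,k}^M}\Big(\int_{\frac{k}{2^j}}^{\frac{k+1}{2^j}} f_{h_{j,k}}(s,\bullet)\,ds\Big)\right)
\]
to pull out a power of $2^{-j}$: each coordinate contributes a factor $2^{-j\left(\frac{h_{j,k}-1}{d}-\frac12\right)}$ from the kernel, one $2^{-j}$ from $ds$, and one $2^{-j/2}$ from the $L^2$ scaling in that variable, so that $\|\widetilde{\Delta_{j,k}}^M\|_{L^2(\Omega)}= 2^{-h_{j,k}j}\cdot\big(d!\big)^{1/2}\,\|\Phi_{M,h_{j,k}}\|_{L^2(\R^d)}$ for a rescaled kernel $\Phi_{M,h_{j,k}}$ supported on $(-M,1]^d$. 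The upper bound on this rescaled $L^2$-norm is uniform in $M$ (dominate by the $M=\infty$ case, which converges since $h_{j,k}\in(\tfrac12,1)$) and uniform in $h_{j,k}\in[a,b]$ by continuity of the integral in the exponent; for the lower bound, note the rescaled kernel already dominates its restriction to the block $(0,1]^d$, whose $L^2$-norm is a strictly positive continuous function of $h_{j,k}$ on the compact set $[a,b]$, hence bounded below by $c^{-1}(d!)^{-1/2}$. This gives both inequalities in (1).

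For item (2), the kernel of $\widecheck{\Delta_{j,k}}^M$ is supported on $(-\infty,\tfrac{k+1}{2^j}]^d\setminus\lambda_{j,k}^M$; by inclusion–exclusion over which coordinates lie in $(-\infty,\tfrac{k-M}{2^j}]$ versus in $(\tfrac{k-M}{2^j},\tfrac{k+1}{2^j}]$, and using that the kernel is a product, its $L^2(\R^d)$-norm is controlled by a sum of products in which at least one factor is the $L^2$-norm of the single-variable kernel over the "far" region $(-\infty,\tfrac{k-M}{2^j}]$. After the same rescaling as above, that far-region single-variable factor is $\big(\int_M^\infty g_{h_{j,k}}(u)^2\,du\big)^{1/2}$ where $g_h(u)=\int_0^1 (u-v)_+^{\frac{h-1}{d}-\frac12}\,dv\asymp u^{\frac{h-1}{d}-\frac12}$ for $u$ large; since $\frac{h-1}{d}-\frac12<-\frac12$, this tail integral is $\asymp M^{\frac{h_{j,k}-1}{d}}$. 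Combining with the uniformly bounded "near" factors and the global $2^{-h_{j,k}j}$ prefactor yields $\|\widecheck{\Delta_{j,k}}^M\|_{L^2(\Omega)}\le c\,M^{\frac{h_{j,k}-1}{d}}2^{-h_{j,k}j}$, with $c$ depending only on $d,K$.

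For item (3), apply the isometry to
\[
\widehat{\Delta_{j,k}}=X_d\!\left(\tfrac{k}{2^j},h_{j,k}\right)-X_d\!\left(\tfrac{k}{2^j},H\!\left(\tfrac{k}{2^j}\right)\right),
\]
which is exactly a difference of the type already bounded in Proposition \ref{prop:holdofgene} (the "$h_1\ne h_2$ at equal time" term): following the mean value theorem estimate \eqref{eq:meanvalfh} there, one gets $\|\widehat{\Delta_{j,k}}\|_{L^2(\Omega)}\le c_\varepsilon\big|h_{j,k}-H(\tfrac{k}{2^j})\big|\big(\|X_d(\tfrac{k}{2^j},a-\varepsilon)\|_{L^2}+\|X_d(\tfrac{k}{2^j},b+\varepsilon)\|_{L^2}\big)$. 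Here one must be slightly careful: the earlier bound $c_2$ depends on the interval $I$ through the factor $|u|^{a-\varepsilon}+|u|^{b+\varepsilon}$, whereas the present statement claims $c$ depends only on $d$ and $K$; this is fine because $u=\tfrac{k}{2^j}\le \tfrac{2^j-1}{2^j}<1$, so those factors are bounded by an absolute constant. Finally $\big|h_{j,k}-H(\tfrac{k}{2^j})\big|=\big|H(\tfrac{k+1}{2^j})-H(\tfrac{k}{2^j})\big|\le \Osc(H,\lambda_{j,k})$ since both endpoints of $\lambda_{j,k}$ lie in its closure, giving (3).

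The main obstacle I expect is bookkeeping the uniformity of the constant: one has to check carefully that after the dyadic rescaling every estimate (the convergence of the rescaled $L^2$-integrals, the positive lower bound on the block $(0,1]^d$, the tail bound in (2), and the suppression of the $|u|$-factors in (3)) is uniform over $h\in K=[a,b]$ and over all $(j,k)$ with $k<2^j$ — in particular using compactness of $K$ together with continuity of $h\mapsto \|f_h(\cdot)\|$-type quantities, and the fact that the relevant integrals at $h=a$ and $h=b$ are already finite. The analytic content (the $u^{\frac{h-1}{d}-\frac12}$ tail behaviour) is routine; getting the dependence of the constants exactly as stated is where the care lies.
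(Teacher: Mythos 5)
Your treatment of items (1) and (3) is essentially the paper's own argument: the same dyadic change of variables reduces (1) to bounding, uniformly over $h\in K$ and $M>0$, the $L^2$-norm of the rescaled kernel from above on $(-\infty,1]^d$ and from below on $[0,1]^d$ (where the paper replaces the exponent by the endpoints $\inf K$, $\sup K$, you invoke continuity--compactness; both work), and (3) is exactly Proposition \ref{prop:holdofgene} applied at the common time $k2^{-j}\leq 1$, so that, as you observe, the dependence on the interval disappears and only $d$ and $K$ remain.

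There is, however, a genuine gap in your item (2). The kernel of $\widecheck{\Delta_{j,k}}^M$ is \emph{not} a product of one-variable functions: after rescaling it reads $\mathbf{y}\mapsto\int_0^1\prod_{\ell=1}^d(v-y_\ell)_+^{\beta}\,dv$ with $\beta=\frac{h-1}{d}-\frac12$, and the common integration in $v$ destroys any factorization. Hence the step ``by inclusion--exclusion \ldots its $L^2(\R^d)$-norm is controlled by a sum of products of single-variable $L^2$-norms of $g_h$'' is not justified, and the two natural ways to force it both fail: the pointwise domination of the kernel by $\prod_\ell g_h(y_\ell)$ is false (for $d\geq 2$, on the diagonal $y_1=\cdots=y_d=y\in[0,1)$ the true kernel equals $\int_0^1(v-y)_+^{d\beta}\,dv=+\infty$ since $d\beta<-1$, while the product of the integrated one-variable kernels is finite), and a coordinate-by-coordinate use of Minkowski's integral inequality breaks down on the ``near'' coordinates because $(v-\cdot)_+^{\beta}$ is not square integrable near its singularity ($2\beta<-1$). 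Your final scaling $M^{\frac{h-1}{d}}2^{-hj}$ is nevertheless correct, and the repair is what the paper does: pull out \emph{only} the far coordinate, using that the exponent is negative, via $(s-x_1)_+^{H_{j,k}}\leq\bigl(\frac{k}{2^j}-x_1\bigr)^{H_{j,k}}$ on $\{x_1\leq\frac{k-M}{2^j}\}$; integrating this factor gives $(M2^{-j})^{\frac{2(h-1)}{d}}$ up to a constant depending only on $d$ and $K$, while the remaining $d-1$ coordinates are kept together with the $s$-integral and, by the isometry \eqref{eqn:isometry}, their contribution is $\frac{1}{(d-1)!}$ times the squared $L^2(\Omega)$-norm of an increment over $[\frac{k}{2^j},\frac{k+1}{2^j}]$ of the order-$(d-1)$ Hermite-type process with Hurst parameter $\frac{(d-1)h+1}{d}\in(\frac12,1)$, which Proposition \ref{prop:holdofgene} bounds by $c\,2^{-2j\frac{(d-1)h+1}{d}}$ uniformly over $K$; the two exponents recombine into $2^{-2jh}$ as required.
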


\begin{proof}
Let us start by showing the first point. Using the isometry property for Wiener-Itô integrals, we get, for all $\N \times \Z $ and $M >0$, with the changes of variable $s \mapsto 2^{-j}(u+k)$ and $\mathbf{w}= 2^j \mathbf{x}-k \mathbf{1}$,
\begin{align*}
 &\| \widetilde{\Delta_{j,k}}^M \|_{L^2(\Omega)}^2 = d! \int_{\lambda_{j,k}^{M}} \left(\int_{\frac{k}{2^j}}^{\frac{k+1}{2^j}} f_{H\left( \frac{k+1}{2^j} \right)}(s,\mathbf{x}) \, ds \right)^2 \, d\mathbf{x} \\
& \quad = d! 2^{- 2 H\left( \frac{k+1}{2^j} \right)j} \int_{(-M,1]^d} \left( \int_{0}^{1} f_{H\left( \frac{k+1}{2^j} \right)}(u,\mathbf{w}) \, du \right)^2 \, d\mathbf{w}.
\end{align*}
Let us remark that if $(u,\mathbf{w}) \in [0,1] \times [0,1]^d$, $\prod_{\ell=1}^d(u-w_\ell)_+ \in [0,1]$ and thus
\[ f_{H\left( \frac{k+1}{2^j} \right)}(u,\mathbf{w}) \geq f_{\sup K}(u,\mathbf{w}). \]
Therefore, we conclude
\[ \| \widetilde{\Delta_{j,k}}^M \|_{L^2(\Omega)} \geq \sqrt{d!} 2^{- H\left( \frac{k+1}{2^j} \right)j} \left \| \int_{0}^{1} f_{\sup K}(u,\bullet) \, du\right\|_{L^2([0,1]^d)}. \]
For the reverse inequality, it suffices to remark that
\begin{align*}
\int_{ (- \infty,1]^d} &\left(\int_{0}^1 f_{H\left( \frac{k+1}{2^j} \right)}(u,\mathbf{w}) \, du\right)^2 \, d\mathbf{w} \\&   \leq \int_{ (- \infty,1]^d} \left(\int_0^1 (f_{\inf K}(u,\mathbf{w}) + (f_{\sup K}(u,\mathbf{w})) du\right)^2 \, d\mathbf{w}.
\end{align*}

For the second point, let us write, for all $(j,k) \in \N \times \{0,\ldots,2^{j}-1\}$, \linebreak $H_{j,k}:=\frac{H\left( \frac{k+1}{2^j} \right)-1}{d}-\frac{1}{2}$. Again from the isometry property \eqref{eqn:isometry} for Wiener-Itô integrals, we get
\begin{align*}
& \| \widecheck{\Delta_{j,k}}^M \|_{L^2(\Omega)}^2 \\
& \leq d!  \int_{- \infty}^{\frac{k-M}{2^j}} (\frac{k}{2^j}-x_1)^{2H_{j,k}} dx_1 \times  \int_{\R^{d-1}} \left( \int_{\frac{k}{2^j}}^{\frac{k+1}{2^j}}\prod_{\ell=2}^d (s-x_\ell)_+^{H_{j,k}} \, ds \right)^2\, dx_2 \ldots dx_d . 
\end{align*}
First, we have
\[ \int_{- \infty}^{\frac{k-M}{2^j}} (\frac{k}{2^j}-x_1)^{2H_{j,k}} dx_1 = (M 2^{-j})^{2\frac{(H\left(\frac{k+1}{2^j} \right)-1)}{d} }\]
On the other hand, from the isometry property \eqref{eqn:isometry} for Wiener-Itô integrals and Proposition \ref{prop:holdofgene}, there exists a deterministic constant $c_1>0$, only depending on $d$, $K$ and $[0,1]$, for which
\begin{align*}
(&d-1)! \int_{\R^{d-1}} \left( \int_{\frac{k}{2^j}}^{\frac{k+1}{2^j}}\prod_{\ell=2}^d (s-x_\ell)_+^{H_{j,k}} \, ds \right)^2\, dx_2 \ldots dx_d \\
& \, = \left \|  X_{d-1}( \frac{k+1}{2^j}, \frac{(d-1)H\left( \frac{k+1}{2^j} \right)+1}{d})-X_{d-1}( \frac{k}{2^j}, \frac{(d-1)H\left( \frac{k+1}{2^j} \right)+1}{d}) \right\|_{L^2(\Omega)}^2
 \\
& \leq c_1 2^{-2j\frac{(d-1)H\left( \frac{k+1}{2^j} \right)+1}{d}}.
\end{align*}
In total, we have found a positive deterministic constant $c_2$, only depending on $d$ and $K$ and $[0,1]$, such that
\[ \| \widecheck{\Delta_{j,k}}^M \|_{L^2(\Omega)} \leq c_2 M^\frac{\sup K-1}{d} 2^{-H\left( \frac{k+1}{2^j} \right) j} . \]

The third and last point is a straightforward consequence of Proposition \ref{prop:holdofgene}.
\end{proof}

In view of the last property, we say that the random variables of the form $\widetilde{\Delta}_\lambda^M$ are dominant.

Finally, let us recall the following important fact about random variables in a given Wiener chaos, see \cite[Theorem 6.9 and Remark 6.10]{MR1474726} for a proof.

\begin{Lemma}\label{lemma:janson}
Given $d \in \N^*$, there exists an universal deterministic constant $\gamma_d \in [0,1)$ such that, for any random variable $X$ in the Wiener chaos of order $d$, one has
\[ \mathbb{P}\left(|X| \leq \frac{1}{2} \|X\|_{L^2(\Omega)}\right) \leq \gamma_d.\]
\end{Lemma}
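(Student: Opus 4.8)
The plan is to deduce this small-ball (anti-concentration) estimate from the hypercontractivity of Wiener chaoses combined with the Paley--Zygmund inequality. First, by homogeneity of the statement under the scaling $X \mapsto \lambda X$, I would reduce to the normalized case $\|X\|_{L^2(\Omega)}=1$ (the degenerate case $\|X\|_{L^2(\Omega)}=0$, i.e.\ $X=0$ almost surely, is irrelevant, since in every application of the lemma the relevant $L^2$-norm is strictly positive by Proposition \ref{prop:l2norm}). Under this normalization the claim becomes: there is $\gamma_d \in [0,1)$, depending only on $d$, with $\mathbb{P}(|X| \leq 1/2) \leq \gamma_d$, equivalently $\mathbb{P}(|X| > 1/2) \geq 1-\gamma_d >0$.

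The quantitative heart of the argument is a uniform upper bound on the fourth moment of $X$. This is exactly where hypercontractivity enters: as already exploited in Corollary \ref{cor:lpnorm} (see \cite[Theorem 2.7.2]{MR2962301}), any random variable $X$ in the Wiener chaos of order $d$ satisfies $\|X\|_{L^4(\Omega)} \leq 3^{d/2}\|X\|_{L^2(\Omega)}$, so that under our normalization $\mathbb{E}[X^4] = \|X\|_{L^4(\Omega)}^4 \leq 9^{d}$. The essential feature here is that this constant is universal: it is independent of the particular chaos element $X$ and of the underlying Gaussian data, which is precisely what will make $\gamma_d$ universal.

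Next I would apply the Paley--Zygmund inequality to the nonnegative random variable $Z:=X^2$, for which $\mathbb{E}[Z]=1$. Writing $\mathbb{E}[Z] = \mathbb{E}[Z \mathbbm{1}_{\{Z \leq \theta\}}] + \mathbb{E}[Z \mathbbm{1}_{\{Z > \theta\}}]$ and bounding the second term by Cauchy--Schwarz gives, for every $\theta \in (0,1)$,
\[ \mathbb{P}(Z > \theta) \geq (1-\theta)^2 \frac{(\mathbb{E}[Z])^2}{\mathbb{E}[Z^2]} = \frac{(1-\theta)^2}{\mathbb{E}[X^4]}. \]
Choosing $\theta = 1/4$, so that $\{Z > 1/4\} = \{|X| > 1/2\}$, and inserting the hypercontractive bound yields
\[ \mathbb{P}\left(|X| > \tfrac{1}{2}\right) \geq \frac{(3/4)^2}{9^{d}} = \frac{9}{16 \cdot 9^{d}}. \]
Hence the lemma holds with $\gamma_d := 1 - \frac{9}{16 \cdot 9^{d}} \in [0,1)$, which depends only on $d$, as required.

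No single step is genuinely hard. The whole substance of the lemma is contained in the hypercontractivity estimate, which I take as known and which guarantees the uniformity of the moment comparison; the Paley--Zygmund step is an elementary Cauchy--Schwarz computation. The only point deserving a word of care is the reduction by scaling and the exclusion of the trivial $X=0$ case, after which the normalization $\|X\|_{L^2(\Omega)}=1$ makes the constants fully explicit.
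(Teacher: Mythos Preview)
Your proof is correct: the reduction by scaling, the hypercontractive bound $\|X\|_{L^4}\leq 3^{d/2}\|X\|_{L^2}$, and the Paley--Zygmund argument applied to $Z=X^2$ are all sound and yield the explicit constant $\gamma_d = 1 - \tfrac{9}{16\cdot 9^d}$. The paper does not supply its own proof of this lemma but merely cites \cite[Theorem 6.9 and Remark 6.10]{MR1474726}; the argument there is based on the equivalence of $L^p$ norms on a fixed Wiener chaos (i.e.\ hypercontractivity), from which the anti-concentration statement is extracted in essentially the same way you do, so your approach is the standard one and matches the cited source.
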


We now have enough material to give a lower bound for the oscillations of the multifractional Hermite process.

\begin{proof}[Proof of Theorem \ref{thm:lowerbound}]
As already explained, we can reduce our attention to the interval $[0,1)$.

If $c_d>0$ is the constant in Lemma \ref{thm:jason}, we fix $c'> \left(\frac{\ln(2)}{c_d}\right)^\frac{d}{2}$. Let also $c$ be the constant given by Proposition \ref{prop:l2norm}. For all $\lambda_{j,k} \in \Lambda$,  we define
\begin{equation}\label{eqn:expressionforM}
M_\lambda := (8c^2c' j^\frac{d}{2})^{ \frac{d}{1-H\left( \frac{k+1}{2^j} \right)}}.
\end{equation}

First, we consider the dominant random variables. We need to fix some notations. If $\lambda=\lambda_{j,k}$ is a dyadic interval and $m \in \N$, $\mathcal{S}_{\lambda,m}=\mathcal{S}_{j,k,m}$ stands for the finite set of cardinality $2^m$ whose elements are the dyadic intervals of scale $j + m$ included in $\lambda_{j,k}$, formally speaking $\mathcal{S}_{j,k,m} := \{ \lambda \in \Lambda_{j+m} \, : \, \lambda \subset \lambda_{j,k} \}$.

If $\gamma_d \in [0,1)$ is the constant given in Lemma \ref{lemma:janson}, one can find $\ell_d \in \N$ such that 
\begin{equation}\label{eqn:gamma}
\gamma^{\ell_d} < 2^{-1}.
\end{equation}

If the dyadic interval $\lambda_{j,k}$ and $m \in \N$ are fixed and $S \in \mathcal{S}_{j,k,m}$ we define the sequences of dyadic intervals $(I_n)_{0 \leq n \leq m}$ and $(T_n)_{1 \leq n \leq m}$ in the following way: 
\begin{itemize}
\item $I_0=\lambda_{j,k}$:
\item  $I_m=S$;
\item for all $1 \leq n \leq m$, $I_{n-1}=I_n \cup T_n$.
\end{itemize}
Now, for any $1 \leq n \leq m$, there are $\ell_d$ dyadic intervals $(T_n^\ell=\lambda_{j_{n}^{(\ell)},k_{n}^{(\ell)}})_{1 \leq \ell \leq \ell_d}$ in $S_{T_n,\lfloor \log_2(\ell_d M_{T_n}) \rfloor +1}$ such that, for all $ 1 \leq \ell \leq \ell_d$
\[ \left(\frac{k_n^{(\ell)}-M_{T_n}}{2^{j_n^{(\ell)}}},\frac{k_n^{(\ell)}+1}{2^{j_n^{(\ell)}}} \right) \subseteq T_n \]
and, if $\ell \neq \ell'$, 
\[ \left(\frac{k_n^{(\ell)}-M_{T_n}}{2^{j_n^{(\ell)}}},\frac{k_n^{(\ell)}+1}{2^{j_n^{(\ell)}}} \right) \cap \left(\frac{k_n^{(\ell')}-M_{T_n}}{2^{j_n^{(\ell')}}},\frac{k_n^{(\ell')}+1}{2^{j_n^{(\ell')}}} \right) = \emptyset.  \]
Therefore, the dyadic intervals $(T_n^\ell)_{1 \leq n \leq m}^{1 \leq \ell \leq \ell_d}$ satisfy condition \eqref{cond:indep} (with $M_n=M_{T_n}$) which insures the independence of the random variables $(\widetilde{\Delta_{T_n^\ell}}^{M_{T_n}})_{1 \leq n \leq m}^{1 \leq \ell \leq \ell_d}$.

From this, for all $S \in \mathcal{S}_{j,k,m}$ we define the Bernoulli random variable
\[ \mathcal{B}_{j,k,m}(S) = \prod_{1 \leq n \leq m, 1 \leq \ell \leq \ell_d} \mathbbm{1}_{\{|\widetilde{\Delta_{T_n^\ell}}^{M_{T_n}}| < 2^{-1} \|\widetilde{\Delta_{T_n^\ell}}^{M_{T_n}}\|_{L^2(\Omega)}\}}.\]
Using Lemma \ref{lemma:janson} and the independence of the random variables $(\widetilde{\Delta_{T_n^\ell}}^{M_{T_n}})_{1 \leq n \leq m}^{1 \leq \ell \leq \ell_d}$, we conclude
\[ \E [\mathcal{B}_{j,k,m}(S)] \leq \gamma^{m \ell_d}. \]
Therefore, if we define the random variable
\[ \mathcal{G}_{j,k,m} = \sum_{S \in \mathcal{S}_{j,k,m}} \mathcal{B}_{j,k,m}(S)  \]
then $\E[\mathcal{G}_{j,k,m}] \leq (2 \gamma^{\ell_d})^m$. It follows from inequality \eqref{eqn:gamma} and Fatou Lemma that
\[ \E \left[ \liminf_{m \to + \infty} \mathcal{G}_{j,k,m} \right] = 0. \]
As a consequence,
$ \Omega_1 = \bigcap_{j \in \N, 0 \leq k < 2^{j}} \{ \omega \, : \, \liminf_{m \to + \infty} \mathcal{G}_{j,k,m}(\omega)=0 \} $
is an event of probability $1$.

Now if $\omega \in \Omega_1$ and $t_0 \in [0,1)$, we take $j \in \N$ and $k=k_j(t_0)$ and since, for all $m$, $\mathcal{G}_{j,k_j(t_0),m}$ has values in $\{0,\ldots,2^m \}$ we conclude that there are infinitely many $m$ for which, for every $S \in \mathcal{S}_{j,k_{j}(t_0),m}$, $\mathcal{B}_{j,k,m}(S)=0$. Considering such a $m$ and $S= \lambda_{j+m}(t_0)$ then we first remark that, for all $1 \leq n \leq m$, $I_n=\lambda_{j+n}(t_0)$ and thus $T_n \in 3\lambda_{j+n}(t_0)$. Now, as $\mathcal{B}_{j,k,m}(\lambda_{j+m}(t_0))=0$, one can find $1 \leq n \leq m$ and $1 \leq \ell \leq \ell_d$ such that
\[|\widetilde{\Delta_{T_n^\ell}}^{M_{T_n}}(\omega)| \geq  \frac{1}{2} \|\widetilde{\Delta_{T_n^\ell}}^{M_{T_n}}\|_{L^2(\Omega)}. \]

In short, we have shown that for all $\omega \in \Omega_1$ and $t_0\in [0,1)$, there exist infinitely many $j \in \N$ such that there is $\lambda \in 3 \lambda_j(t_0)$ and $\lambda' \in  \mathcal{S}_{\lambda,\lfloor \log_2(\ell_d M_{\lambda}) \rfloor +1}$ for which
\begin{equation}\label{eq:osc1}
|\widetilde{\Delta_{\lambda'}^{M_{\lambda}}}(\omega)| \geq  \frac{1}{2} \|\widetilde{\Delta_{\lambda'}^{M_{\lambda}}}\|_{L^2(\Omega)}.
\end{equation}

On the other hand, from Lemma \ref{thm:jason}, we know that, for all $j$ large enough,
\begin{align*}
\mathbb{P}&\left(\exists \lambda \in \Lambda_{j}, \lambda' \in \mathcal{S}_{\lambda,\lfloor \log_2(\ell_d M_{\lambda}) \rfloor +1}  \, : \,  \left| \widecheck{\Delta_{\lambda'}}^{M_\lambda} \right| \geq  c' j^\frac{d}{2}  \left \| \widecheck{\Delta_{\lambda'}}^{M_{\lambda}} \right\|_{L^2(\Omega)}  \right)\\
& \leq  2 \ell_d \sup_{\lambda \in \Lambda_j} M_\lambda 2^j \exp(-c_d (c')^\frac{2}{d} j).
\end{align*}
Thus, as $c'> \left(\frac{\ln(2)}{c_d}\right)^\frac{d}{2}$, recalling the explicit expression \eqref{eqn:expressionforM}, we have
\[ \sum_{j=0}^{+ \infty}\mathbb{P}\left(\exists \lambda \in \Lambda_{j}, \lambda' \in \mathcal{S}_{\lambda,\lfloor \log_2(\ell_d M_{\lambda}) \rfloor +1}  \, : \,  \left| \widecheck{\Delta_{\lambda'}}^{M_\lambda} \right| \geq  c' j^\frac{d}{2}  \left \| \widecheck{\Delta_{\lambda'}}^{M_{\lambda}} \right\|_{L^2(\Omega)}  \right)  < \infty\]
We can then deduce from Borel-Cantelli Lemma the existence of $\Omega_2$, an event of probability $1$ such that, for all $\omega \in \Omega_2$, there exists $J_2 \in \N$ such that, for all $j \geq J_2$, $ \lambda \in \Lambda_{j}$ and $\lambda' \in S_{\lambda,\lfloor \log_2(\ell_d M_{\lambda}) \rfloor +1}$,
\begin{equation}\label{eq:osc2}
 \left| \widecheck{\Delta_{\lambda'}}^{M_\lambda} (\omega) \right| \leq c' j^\frac{d}{2} \left \| \widecheck{\Delta_{\lambda'}}^{M_\lambda} \right\|_{L^2(\Omega)}.
\end{equation}

Similarly, we prove the existence of $\Omega_3$, an event of probability $1$ such that, for all $\omega \in \Omega_3$, there exists $J_3 \in \N$ such that, for all $j \geq J_3$, $ \lambda \in \Lambda_{j}$ and $\lambda' \in S_{\lambda,\lfloor \log_2(\ell_d M_{\lambda}) \rfloor +1}$,
\begin{equation}\label{eq:osc3}
 \left| \widehat{\Delta_{\lambda'}} (\omega) \right| \leq c' j^\frac{d}{2} \left \| \widehat{\Delta_{\lambda'}} \right\|_{L^2(\Omega)}.
\end{equation}

Now, if $\omega$ is such that inequalities \eqref{eq:osc1}, \eqref{eq:osc2} and \eqref{eq:osc3} hold, with $ \lambda \in 3\lambda_{j}(t_0)$ and $\lambda'=\lambda_{j',k'} \in S_{\lambda,\lfloor \log_2(\ell_d M_{\lambda}) \rfloor +1}$ then, from Proposition \ref{prop:l2norm}, we deduce
\begin{align*}
|&\Delta_{\lambda'}(\omega)|  \geq |\widetilde{\Delta_{\lambda'}^{M_{\lambda}}}(\omega)| -\left( \left| \widecheck{\Delta_{\lambda'}}^{M_\lambda} (\omega) \right|  + \left| \widehat{\Delta_{\lambda'}} (\omega) \right| \right)  \\
& \geq \frac{c^{-1}}{2} 2^{-j' H\left( \frac{k'+1}{j'} \right)} - c c' \left(  (8c^2c' j^\frac{d}{2})^{\frac{H\left( \frac{k'+1}{2^{j'}}\right)-1}{1-H\left( \frac{k+1}{2^j} \right)}} j^\frac{d}{2} 2^{-j' H\left( \frac{k'+1}{j'} \right)}+\Osc(H,\lambda')\right).
\end{align*}
First, by Condition \ref{condi} (b), we know that there is $c''>0$  and $\gamma > H(t_0)$ such that
\begin{equation}\label{rg:H:regponc}
\Osc(H,\lambda') \leq c'' 2^{-\gamma j}.
\end{equation}
Let us remark that
\begin{align*}
\lim_{j \to + \infty} j^{\frac{d}{2} \left(1+\frac{H\left( \frac{k'+1}{2^{j'}}\right)-1}{1-H\left( \frac{k+1}{2^j} \right)} \right)} = \lim_{j \to + \infty} \exp \left(\ln(j) \frac{d}{2} \left( \frac{H\left( \frac{k'+1}{2^{j'}}\right)-H\left( \frac{k+1}{2^j} \right)}{1-H\left( \frac{k+1}{2^j} \right)} \right) \right)
\end{align*}
and, as
\begin{align*}
\left| \frac{H\left( \frac{k'+1}{2^{j'}}\right)-H\left( \frac{k+1}{2^j} \right)}{1-H\left( \frac{k+1}{2^j} \right)} \right| \leq c''\frac{ 2^{-\gamma j}}{1-\sup K},
\end{align*}
we get
\[ \lim_{j \to + \infty} j^{\frac{d}{2} \left(1+\frac{H\left( \frac{k'+1}{2^{j'}}\right)-1}{1-H\left( \frac{k+1}{2^j} \right)} \right)} =1\]
Similarly, we also have
\[ \lim_{j \to + \infty} (8c^2c')^{\frac{H\left( \frac{k'+1}{2^{j'}}\right)-1}{1-H\left( \frac{k+1}{2^j} \right)}}= (8c^2c')^{-1} \]
In particular, if $j$ is large enough, $H\left( \frac{k'+1}{j'} \right)$ is also strictly less that $\gamma$ and one can write
\[ c c'\Osc(H,\lambda') \leq \frac{c^{-1}}{8} (8c^2c' j^\frac{d}{2})^{ -\frac{dH\left( \frac{k'+1}{j'} \right)}{1-H\left( \frac{k+1}{2^j} \right)}} 2^{-j H\left( \frac{k'+1}{j'} \right)}.\]
Putting all of these together, we conclude that, for all $j$ sufficiently large,
\begin{align}
|\Delta_{\lambda'}(\omega)|  &\geq \frac{c^{-1}}{4} 2^{-j' H\left( \frac{k'+1}{j'} \right)}- \frac{c^{-1}}{8} (8c^2c' j^\frac{d}{2})^{ -\frac{dH\left( \frac{k'+1}{j'} \right)}{1-H\left( \frac{k+1}{2^j} \right)}} 2^{-j H\left( \frac{k'+1}{j'} \right)} \nonumber \\
& \geq \frac{c^{-1}}{8} (8c^2c' j^\frac{d}{2})^{ -\frac{dH\left( \frac{k'+1}{j'} \right)}{1-H\left( \frac{k+1}{2^j} \right)}} 2^{-j H\left( \frac{k'+1}{j'} \right)}.\label{eqn:finpreuve}
\end{align}

In total, on $\Omega_1 \cap \Omega_2 \cap \Omega_3$, which is an event of probability $1$, for all $t_0 \in [0,1)$, we have, from equations \eqref{eqn:defleader}, \eqref{eqn:finpreuve} and Condition \ref{condi} (b) for $H$,
\[ \limsup_{j \to + \infty} \frac{\Osc(X_d^{H(\cdot)},[t_0-22^{-j},t_0+22^{-j}] \cap \R_+ )}{2^{-j H(t_0)} j^{- \frac{d^2H(t_0)}{2(1-H(t_0))}}}>0.\]
\end{proof}

Theorem \ref{thm:holderregularity} is then a direct consequence of Proposition \ref{prop:holdereggene} and Theorem \ref{thm:lowerbound}.

\begin{proof}[Proof of Theorem \ref{thm:holderregularity}]
Let us consider the events $\Omega^*$ and $\Omega^*_2$ given by Proposition \ref{prop:holdereggene} and Theorem \ref{thm:lowerbound} respectively. Then, $\Omega^* \cap \underline{\Omega}$ is an event of probability $1$ on which:
\begin{itemize}
\item for all $t \geq 0$, $h_{X_d^{H(\cdot)}}(t_0) \geq H(t_0)$, by Proposition \ref{prop:holdereggene} and Condition \ref{condi} (b);
\item for all $t \geq 0$, $h_{X_d^{H(\cdot)}}(t_0) \leq H(t_0)$, by Theorem \ref{thm:lowerbound}.
\end{itemize}
\end{proof}

\begin{Rmk}
Let us recall that, for all continuous function $f$, all interval $I$ and all $t_0 \in I$, $H_f(I) \leq h_f(t_0)$. Thus, an immediate consequence of Theorems \ref{thm:modulusofcontinuity} and \ref{thm:holderregularity} is that, if the Hurst function $H$ satisfies Conditions \ref{condi} (a) and (b), on the event $\Omega^*_1 \cap \Omega^*_2$ of probability $1$, for all interval $I \subseteq \R_+$
\begin{equation}\label{eqn:exposantholderunif}
H_{X_d^{H(\cdot)}} (I) = \underline{H}(I).
\end{equation}
Let us note that only Condition (a) is required to deduce this fact. Indeed, if $t_0 \in I$ is such that $H(t_0) = \underline{H}(I)$, then it is still possible possible to reach the bounds \eqref{rg:H:regponc} at $t_0$ and then \eqref{eqn:finpreuve}. Then, on $\underline{\Omega}$, \eqref{eq:limsuppos} holds at $t_0$. It follows that, on $\underline{\Omega}$
\[  H_{X_d^{H(\cdot)}} (I) \leq h_{X_d^{H(\cdot)}} (t_0) \leq H(t_0)= \underline{H}(I)\]
and the equality \eqref{eqn:exposantholderunif} holds on $\underline{\Omega} \cap \Omega^*$.
\end{Rmk}

\section{Law of iterated logarithm}\label{section:iterated}

Let us now prove that multifractional Hermite processes enjoy a law of iterated logarithm. We use similar arguments as in Sections \ref{sect:modulus} and \ref{sec:pointwiseholder} but somehow ``localize'' them. This localization helps us to deduce, at each point, a sharper modulus of continuity which bounds, both from above and below, the oscillations of the process around this point. Let us start by showing the positiveness of the limit in \eqref{eq:iterated}.

\begin{Prop}\label{pro:iterated1}
Given $d \in \N^*$, a compact set $K$ of $(\frac{1}{2},1)$ and a Hurst function $H \, : \, \R_+ \to K$ satisfying Condition \ref{condi} (c), there exists $\overline{\Omega}_1$, an event of probability $1$, such that on $\overline{\Omega}_1$, for (Lebesgue) almost every $t_0 \in \R_+$, we have
\begin{equation}\label{eqn:prop1iterad}
 \limsup_{r \to 0^+} \frac{ \Osc(X_d^{H(\cdot)},[t_0-r,t_0+r] \cap \R_+ )}{r^{H(t_0)} (\log(\log r^{-1}))^\frac{d}{2}} < \infty.
\end{equation}
\end{Prop}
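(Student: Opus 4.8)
The plan is to prove \eqref{eqn:prop1iterad} \emph{for each fixed $t_0$ on an event of probability $1$}, and then to pass to almost every $t_0$ simultaneously by a Fubini argument. As in Sections~\ref{sect:modulus} and~\ref{sec:pointwiseholder} it suffices to treat $t_0\in[n,n+1]$ for each $n\in\N$ and to intersect the resulting events, so fix $n$ and write $X_{j,k}:=X_d^{H(\cdot)}(k2^{-j})$. The starting point is the localized deterministic inequality
\[
 \Osc\big(X_d^{H(\cdot)},[t_0-2^{-j_0},t_0+2^{-j_0}]\cap\R_+\big)\ \leq\ 2\sum_{j\geq j_0}\max\big\{|\Delta_\lambda|\ :\ \lambda\in\Lambda_j,\ \lambda\subseteq 3\lambda_{j_0}(t_0)\big\},
\]
obtained from the same dyadic ``binary path'' telescoping as in the proof of Theorem~\ref{thm:modulusofcontinuity}: $[t_0-2^{-j_0},t_0+2^{-j_0}]$ is contained in the union of the three dyadic intervals making up $3\lambda_{j_0}(t_0)$, and two dyadic points lying in a union of whole dyadic intervals are joined by a path using at most two increments $\Delta_\lambda$ per scale $j\geq j_0$, all with $\lambda\subseteq 3\lambda_{j_0}(t_0)$. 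The crucial feature is that at each scale $j$ the maximum runs over only $\asymp 2^{\,j-j_0}$ intervals, all within distance $2^{-j_0}$ of $t_0$. Using Condition~\ref{condi}~(c) — with $I_{t_0}$ and $\gamma>H(t_0)$ as there — together with Proposition~\ref{prop:holdofgene}, for $j_0$ large enough that $[t_0-2^{-j_0},t_0+2^{-j_0}]\subseteq I_{t_0}$ one gets, uniformly over $\lambda=\lambda_{j,k}\subseteq 3\lambda_{j_0}(t_0)$ with $j\geq j_0$,
\[
 \|\Delta_\lambda\|_{L^2(\Omega)}\ \leq\ \sigma_{j,j_0}\ :=\ c\big(2^{-j(H(t_0)-\epsilon_{j_0})}+2^{-j\gamma}\big),\qquad \epsilon_{j_0}=C\,2^{-j_0\gamma}\to 0\ \text{ as }j_0\to\infty,
\]
and, since $\gamma>H(t_0)$ (hence $j_0\epsilon_{j_0}\to 0$), the series $\sum_{j\geq j_0}(1+j-j_0)^{d/2}\sigma_{j,j_0}$ is bounded by $C'\,2^{-j_0H(t_0)}$ for all large $j_0$.

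Since each $\Delta_\lambda$ belongs to the $d$-th Wiener chaos, Lemma~\ref{thm:jason} applied with the scale-dependent threshold $y_j:=c''(1+j-j_0)^{d/2}(\log j_0)^{d/2}\sigma_{j,j_0}$ gives, for large $j_0$,
\[
 \mathbb{P}\Big(\exists\,j\geq j_0,\ \lambda\in\Lambda_j,\ \lambda\subseteq 3\lambda_{j_0}(t_0)\ :\ |\Delta_\lambda|>y_j\Big)\ \leq\ \sum_{j\geq j_0}3\cdot 2^{\,j-j_0}\,j_0^{-c_d(c'')^{2/d}(1+j-j_0)}\ =\ O\big(j_0^{-c_d(c'')^{2/d}}\big),
\]
so choosing $c''$ with $c_d(c'')^{2/d}>1$ makes the right-hand side summable in $j_0$. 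By Borel--Cantelli, on an event of probability $1$ there is a finite random index $J$ such that, for all $j_0\geq J$,
\[
 \Osc\big(X_d^{H(\cdot)},[t_0-2^{-j_0},t_0+2^{-j_0}]\cap\R_+\big)\ \leq\ 2\sum_{j\geq j_0}y_j\ \leq\ 2c''C'\,(\log j_0)^{d/2}\,2^{-j_0H(t_0)} .
\]
Choosing $j_0=\lfloor\log_2 r^{-1}\rfloor$ as $r\to 0^+$ and using $[t_0-r,t_0+r]\subseteq[t_0-2^{-j_0},t_0+2^{-j_0}]$, $2^{-j_0}\leq 2r$ and $\log j_0\sim\log\log r^{-1}$, this is exactly \eqref{eqn:prop1iterad} for the fixed $t_0$.

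To turn this into a statement valid for almost every $t_0$ on a single event, observe that $B:=\{(\omega,t_0)\in\Omega\times[n,n+1]\ :\ \eqref{eqn:prop1iterad}\text{ fails}\}$ is measurable (the oscillation is jointly measurable in $(\omega,t_0)$ by continuity of the sample paths, Proposition~\ref{prop:holdereggene}), while the previous paragraph shows $\mathbb{P}(B^{t_0})=0$ for every $t_0$. Hence, by Fubini, $\int_\Omega |B^\omega|\,d\mathbb{P}(\omega)=\int_{[n,n+1]}\mathbb{P}(B^{t_0})\,dt_0=0$ (with $|\cdot|$ the Lebesgue measure), so $\{\omega:|B^\omega|=0\}$ has probability $1$; intersecting over $n\in\N$ yields the event $\overline{\Omega}_1$.

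The main obstacle is the probabilistic estimate. Having $\asymp 2^{\,j-j_0}$ increments at scale $j$ forces the threshold to grow like $(1+j-j_0)^{d/2}$ in $j$, but this costs nothing in the end since $\sum_{j\geq j_0}(1+j-j_0)^{d/2}\sigma_{j,j_0}$ is still $O(2^{-j_0H(t_0)})$, with no logarithmic loss; it is the extra factor $(\log j_0)^{d/2}$ — equivalently $(\log\log r^{-1})^{d/2}$ — that the sub-Gaussian-type tail of Lemma~\ref{thm:jason} converts into a bound summable in $j_0$, and this Borel--Cantelli over \emph{scales} (rather than over all dyadic cubes, which would only give the rate $(\log r^{-1})^{d/2}$ of Theorem~\ref{thm:modulusofcontinuity}) is what produces the iterated logarithm. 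One should also check, as in Section~\ref{sec:pointwiseholder}, that the correction $\epsilon_{j_0}$ remains negligible (it does, because $\gamma>H(t_0)$), and that the boundary term $|X_{j_0,k_{j_0}(t)}-X_{j_0,k_{j_0}(s)}|$ in the telescoping can be absorbed into the same exceptional event.
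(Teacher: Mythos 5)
Your proof is correct and follows essentially the same route as the paper's: localize to $3\lambda_{j_0}(t_0)$, apply Lemma \ref{thm:jason} with a threshold of order $(\log j_0)^{\frac{d}{2}}(j-j_0+1)^{\frac{d}{2}}$ and Borel--Cantelli over the base scale $j_0$, chain along dyadic scales using Proposition \ref{prop:holdofgene} together with Condition \ref{condi} (c), and conclude by countable intersection and Fubini. The only cosmetic difference is that you control the single-scale increments $\Delta_\lambda$ through an explicit oscillation-chaining inequality, whereas the paper controls same-scale differences $X_{j,k'}-X_{j,k}$ inside $3\lambda_{j_0}(t_0)$ and telescopes across scales (both on the continuity event $\Omega^*$ of Proposition \ref{prop:holdereggene}).
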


\begin{proof}
We use the notation introduced before the proof of Theorem \ref{thm:modulusofcontinuity}. Let us fix $t_0 \in [0,1)$ and $c > c_d^\frac{-2}{d}$, with $c_d>0$ the constant in Lemma \ref{thm:jason}. For all $j_0\in \N$, let $A_{j_0}(t)$ be the event defined by
\[ \left(\exists j \geq j_0, \lambda_{k,j}, \lambda_{k',j} \subseteq 3\lambda_{j_0}(t_0) : \frac{|X_{j,k'}-X_{j,k}|}{\|X_{j,k'}-X_{j,k}\|_{L^2(\Omega)}} \geq c \log(j_0)^{\frac{d}{2}} (j-j_0+1)^\frac{d}{2}  \right). \]
 If $j_0$ is sufficiently large, we have, by Lemma \ref{thm:jason}, 
\begin{align*}
\mathbb{P}(A_{j_0}(t)) &\leq \sum_{j \geq j_0} 3 2^{j-j_0}\exp(-c_d c^\frac{2}{d} \log(j_0) (j-j_0+1)) \\
& \leq c' \exp(-c_d c^\frac{2}{d} \log(j_0)),
\end{align*} 
for a deterministic constant $c'>0$ independent of any relevant quantities. Thus, as $c > c_d^\frac{-2}{d}$, we have $
\sum_{j_0=0}^{+ \infty}   \mathbb{P}(A_{j_0}(t))  < \infty$ and Borel-Cantelli Lemma entails the existence of $\Omega_{t_0}$, an event of probability $1$, such that, on $\Omega_{t_0}$, there exists $J \in \N$, for which, for all $j \geq j_0 \geq J$, $ \lambda_{k,j}, \lambda_{k',j}  \subseteq  3\lambda_{j_0}(t_0)$,
\begin{equation} \label{eq:demoupperbouniter}
|X_{j,k'}-X_{j,k}| \leq c \log(j_0)^{\frac{d}{2}} (j-j_0+1)^\frac{d}{2} \|X_{j,k'}-X_{j,k}\|_{L^2(\Omega)}.
\end{equation}

Let us then consider $j_0 \geq J$ and $s, t \in [t_0-r,t_0+r]$ with $2^{-(j_0+1)} \leq r \leq 2^{-j_0}$. For any $j \geq j_0$ and $x \in \{s,t\}$, $\lambda_j(x) \subseteq 3 \lambda_{j_0}(t_0)$. Thus, increasing $j_0$ if necessary, from Proposition \ref{prop:holdofgene} and the Condition \ref{condi} (c) for $H$, one can write,
\begin{align}
\|X_{j,k_{j}(t)}-X_{j,k_{j}(s)}\|_{L^2(\Omega)} & \leq c_1 \left(2^{-j \min\{H(k_{j}(t)2^{-j}),H(k_{j}(s)2^{-{j}})\}} + 2^{-j H(t_0)} \right) \nonumber \\ 
& \leq c_1 \left(2^{-j \min\{H(t),H(s)\}} 2^{-j 2^{-j H(t_0)}} + 2^{-jH(t_0)}\right) \nonumber \\
& \leq 2 c_1 2^{-j \min\{H(t_0),H(t),H(s)\}} \label{eq:oufouf}
\end{align}
for a deterministic constant $c_1>0$, only depending on $d$, $K$ and $[0,1]$.

On the event $\Omega_*$ given by Proposition \ref{prop:holdereggene}, one can write
\begin{align*}
X_d^{H(\cdot)}(t) - X_d^{H(\cdot)}(s)& = X_{j_0,k_{j_0}(t)}-X_{j,k_{j_0}(s)}  \\ & + \sum_{j \geq j_0} \left(X_{j+1,k_{j+1}(t)}-X_{j+1,k_{j+1}(s)}-X_{j,k_{j}(t)}+X_{j,k_{j}(s)} \right). 
\end{align*}
It then follows from inequalities \eqref{eq:demoupperbouniter} and \eqref{eq:oufouf} that, on $\Omega^* \cap \Omega_{t_0}$, there exists a constant $c_2>0$, only depending on $d$, $K$ and $[0,1]$ such that
\[ |X_d^{H(\cdot)}(t) - X_d^{H(\cdot)}(s)| \leq c_2 2^{-j_0 \min\{H(t_0),H(t),H(s)\}} \log(j_0)^{\frac{d}{2}}.\]
Increasing $j_0$ if necessary, the Condition \ref{condi} (c) for $H$ and the inequalities $2^{-(j_0+1)} \leq r \leq 2^{-j_0}$ finally give
\[ |X_d^{H(\cdot)}(t)-X_d^{H(\cdot)}(s) | \leq 2 c_2 r^{H(t_0) } (\log(\log(r^{-1}))^\frac{d}{2}.\]

In total, we have proved that for any $t_0 \in [0,1]$ there exists $\Omega_{t_0}$, an event of probability $1$, on which \eqref{eqn:prop1iterad} holds. The conclusion follows by countable intersection and Fubini theorem.
\end{proof}

Let us now focus on the positiveness of the limit in \eqref{eq:iterated}. We use again the random variables introduced in Definition \ref{def:chaptilde}. First, we need to bound from below the probabilities
\begin{equation}\label{eqn:probatoboundfrombelow}
\mathbb{P}(|\widetilde{\Delta}^M_{j,k}| \geq y 2^{-j H\left( \frac{k+1}{2^j}\right)})
\end{equation}
for $(j,k) \in \N \times \{0,\ldots,2^{j}-1\}$ and $M>0$. We know that for any variable $X$ in the Wiener chaos of order $d$, there exist two deterministic constants $y_0 \geq 0$ and $c>0$ such that, for all $y \geq y_0$,
\[ \mathbb{P}(|X| \geq y) \geq \exp(-c y^\frac{2}{d}), \]
see \cite[Theorem 6.12]{MR1474726}. But, unfortunately, these constants depend on the law of $X$ and are not universal, which is undesirable in our context. Nevertheless, using some convergences in $L^2(\Omega)$, we still manage to ``uniformly'' bound the probabilities \eqref{eqn:probatoboundfrombelow} from below.

\begin{Lemma}\label{lemma:lowerboundinproba}
Let $d \in \N^*$, $K$ be a compact set of $(\frac{1}{2},1)$ and $H \, : \, \R_+ \to K$ be a continuous Hurst function. For all $t_0 \in \R_+$, there exist four deterministic constants $c_{t_0}>0$, $y_{t_0}>0$, $j_0 \in \N$ and $M_0>0$ such that, for all $ \lambda_{j,k} \subseteq 3 \lambda_{j_0}(t_0)$, $M \geq M_0$ and $y >y_{t_0}$, we have
\begin{equation}\label{eq:lowerboundinproba}
 \mathbb{P}(|\widetilde{\Delta}^M_{j,k}| \geq y 2^{-j H\left( \frac{k+1}{2^{j}}\right)}) \geq \exp(-c_{t_0} y^{\frac{2}{d}}).
\end{equation}
\end{Lemma}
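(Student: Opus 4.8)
The plan is to strip the indices $j,k$ by a rescaling and then to prove \eqref{eq:lowerboundinproba} uniformly over a suitable \emph{compact} family of Wiener--It\^o integrals. Fix $t_0$, and for a dyadic interval $\lambda_{j,k}$ write $h:=H\big((k+1)2^{-j}\big)$. Performing in \eqref{def:variabledominante} the changes of variable $s\mapsto 2^{-j}(u+k)$ and $\mathbf{w}=2^{j}\mathbf{x}-k\mathbf{1}$ used in the proof of Proposition~\ref{prop:l2norm}, one finds that $\widetilde{\Delta}^M_{j,k}=2^{-jh}2^{jd/2}\,I_d\!\big(g^M_h(2^j\cdot-k\mathbf{1})\big)$, where $g^M_h(\mathbf{w}):=\mathbbm{1}_{(-M,1]^d}(\mathbf{w})\int_0^1 f_h(u,\mathbf{w})\,du$ belongs to $L^2(\R^d)$. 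Since the Wiener--It\^o integral is invariant under a translation of its argument and satisfies $I_d(\phi(\lambda\,\cdot))\overset{d}{=}\lambda^{-d/2}I_d(\phi)$ for $\lambda>0$ --- both facts being consequences of the stationarity of the increments and the self-similarity of Brownian motion, as already invoked in the proof of Proposition~\ref{prop:holdofgene} --- the variable $\widetilde{\Delta}^M_{j,k}$ has the same distribution as $2^{-jh}\,I_d(g^M_h)$. Hence \eqref{eq:lowerboundinproba} amounts to showing $\mathbb{P}\big(|I_d(g^M_h)|\geq y\big)\geq\exp(-c_{t_0}y^{2/d})$ uniformly over the relevant values of $h$ and $M$.

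By continuity of $H$ one can choose $j_0$ so large that $H$ maps $\overline{3\lambda_{j_0}(t_0)}\cap\R_+$ into a fixed compact interval $K_{t_0}\subset(\tfrac12,1)$; then $\lambda_{j,k}\subseteq 3\lambda_{j_0}(t_0)$ forces $h\in K_{t_0}$. Set $g^\infty_h:=\int_0^1 f_h(s,\bullet)\,ds$, so that $I_d(g^\infty_h)=X_d(1,h)$ by Definition~\ref{def:generator} and $g^M_h=\mathbbm{1}_{(-M,1]^d}\,g^\infty_h$. The computations in the proofs of Propositions~\ref{prop:holdofgene} and~\ref{prop:l2norm} (which are insensitive to replacing the constant value $H((k+1)2^{-j})$ by an arbitrary parameter) give, uniformly in $h\in K_{t_0}$ and $M\geq 1$: (i) $h\mapsto g^\infty_h$ is Lipschitz from $K_{t_0}$ into $L^2(\R^d)$; (ii) $\|I_d(g^M_h)\|_{L^2(\Omega)}\geq\kappa$ for some $\kappa>0$; and (iii) $\|g^M_h-g^\infty_h\|_{L^2(\R^d)}\leq c'\,M^{(h-1)/d}\leq c'\,M^{(\max K_{t_0}-1)/d}$, which tends to $0$ as $M\to\infty$. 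In particular $\mathcal{C}_{t_0}:=\{g^\infty_h:h\in K_{t_0}\}$ is a compact subset of $L^2(\R^d)$ on which $\|I_d(\cdot)\|_{L^2(\Omega)}\geq\kappa$.

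The decisive step is a \emph{local} uniformity statement, obtained by perturbation using the universal upper tail bound of Lemma~\ref{thm:jason}. Let $g_0\in L^2(\R^d)$ with $I_d(g_0)\neq 0$ and, by \cite[Theorem 6.12]{MR1474726}, fix $c_0,y_0>0$ with $\mathbb{P}(|I_d(g_0)|\geq y)\geq\exp(-c_0y^{2/d})$ for $y\geq y_0$. For $g$ with $\|g-g_0\|_{L^2(\R^d)}<\varepsilon$, put $R:=I_d(g-g_0)$, so $\|R\|_{L^2(\Omega)}\leq\sqrt{d!}\,\varepsilon$, and estimate
\begin{align*}
\mathbb{P}\big(|I_d(g)|\geq y\big)&\geq \mathbb{P}\big(|I_d(g_0)|\geq 2y\big)-\mathbb{P}\big(|R|\geq y\big)\\
&\geq \exp\big(-c_0 2^{2/d}y^{2/d}\big)-\exp\big(-c_d(\sqrt{d!}\,\varepsilon)^{-2/d}y^{2/d}\big),
\end{align*}
the last term by Lemma~\ref{thm:jason} (valid once $y\geq 2\sqrt{d!}\,\varepsilon$). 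If $\varepsilon$ is small enough that $c_d(\sqrt{d!}\,\varepsilon)^{-2/d}\geq c_0 2^{2/d}+1$, the right-hand side exceeds $\tfrac12\exp(-c_0 2^{2/d}y^{2/d})$, hence $\exp(-(c_0 2^{2/d}+1)y^{2/d})$, for all $y$ above an explicit threshold depending only on $c_0,y_0,d,\varepsilon$; thus the bound holds throughout the ball $B(g_0,\varepsilon)$, with $c(g_0):=c_0 2^{2/d}+1$ and some $y(g_0)>0$. Covering $\mathcal{C}_{t_0}$ by finitely many balls $B(g_0^{(i)},\varepsilon_i/2)$ of this type, setting $c_{t_0}:=\max_i c(g_0^{(i)})$, $y_{t_0}:=\max_i y(g_0^{(i)})$ and $\varepsilon_*:=\tfrac12\min_i\varepsilon_i$, one obtains $\mathbb{P}(|I_d(g)|\geq y)\geq\exp(-c_{t_0}y^{2/d})$ for every $g$ within $L^2$-distance $\varepsilon_*$ of $\mathcal{C}_{t_0}$ and every $y\geq y_{t_0}$. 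Finally, by (iii) pick $M_0$ with $\sup_{h\in K_{t_0}}\|g^M_h-g^\infty_h\|_{L^2(\R^d)}<\varepsilon_*$ for all $M\geq M_0$; then every $g^M_h$ with $h\in K_{t_0}$, $M\geq M_0$ lies within $\varepsilon_*$ of $\mathcal{C}_{t_0}$, and undoing the scaling $\widetilde{\Delta}^M_{j,k}\overset{d}{=}2^{-jh}I_d(g^M_h)$ yields \eqref{eq:lowerboundinproba}.

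The main obstacle is exactly this uniformity. A single nonzero element $X$ of the $d$th chaos does have a stretched-exponential lower tail $\mathbb{P}(|X|\geq y)\geq e^{-cy^{2/d}}$, but its constant depends on the law of $X$ and degenerates along sequences whose laws converge to a Gaussian (where the tail is only of order $e^{-cy^{2}}$), so no bound uniform over the whole chaos can exist. What saves the argument is that the family actually needed is the continuous --- hence $L^2$-compact --- image of the compact parameter set $K_{t_0}$, together with a uniformly controlled $M\to\infty$ limit, and that Lemma~\ref{thm:jason} is precisely the device that transfers a one-point lower bound to an entire $L^2$-neighbourhood \emph{at every level $y$ simultaneously}. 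The remaining verifications --- continuity of $h\mapsto g^\infty_h$ into $L^2$ and uniform smallness of the $M$-truncation error --- are immediate rereadings of Propositions~\ref{prop:holdofgene} and~\ref{prop:l2norm}.
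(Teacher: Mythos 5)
Your argument is correct, and it reaches \eqref{eq:lowerboundinproba} by a route that is organized quite differently from the paper's, even though the two proofs share the same engine: a one-point lower tail bound from \cite[Theorem 6.12]{MR1474726} at a reference element of the chaos, the elementary inequality $\mathbb{P}(|A|\geq y)\geq\mathbb{P}(|A+B|\geq 2y)-\mathbb{P}(|B|\geq y)$, and Lemma \ref{thm:jason} to kill the perturbation term, with the $L^2$ estimates of Propositions \ref{prop:holdofgene} and \ref{prop:l2norm} quantifying the smallness of the perturbations. The paper never rescales $\widetilde{\Delta}^M_{j,k}$ alone; it observes that $\widetilde{\Delta}^M_{j,k}+\widecheck{\Delta}^M_{j,k}$ equals in law $2^{-jH\left(\frac{k+1}{2^j}\right)}X_d\big(1,H\big(\tfrac{k+1}{2^j}\big)\big)$ and then performs two successive subtractions anchored at the single random variable $X_d(1,H(t_0))$: first it removes $\widecheck{\Delta}^M_{j,k}$ using Proposition \ref{prop:l2norm}(2) and $M\geq M_0$ large, then it compares $X_d\big(1,H\big(\tfrac{k+1}{2^j}\big)\big)$ with $X_d(1,H(t_0))$ using Proposition \ref{prop:holdofgene}, the continuity of $H$ and $j\geq j_0$ large; this is exactly why the constants $j_0$ and $c_{t_0}$ appear in the statement. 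You instead verify the distributional identity $\widetilde{\Delta}^M_{j,k}\overset{d}{=}2^{-jh}I_d(g^M_h)$ at the kernel level (your scaling computation is correct and is the probabilistic counterpart of the change of variables in the proof of Proposition \ref{prop:l2norm}), reduce the lemma to a lower tail bound uniform over the $L^2$-compact family $\{g^\infty_h\}$ plus a small truncation error, and obtain that uniformity by a finite covering combined with your perturbation lemma, which is the same subtraction-plus-Lemma \ref{thm:jason} step applied once per ball. What your organization buys is a genuinely stronger conclusion: since $H$ already takes values in the compact set $K$, your covering argument gives constants $c,y_0,M_0$ depending only on $d$ and $K$, valid for \emph{all} dyadic intervals, so the anchor point $t_0$, the continuity hypothesis localized at $t_0$ and the constant $j_0$ become superfluous. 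What the paper's organization buys is brevity: no covering, no explicit kernel-scaling identity for $\widetilde{\Delta}^M_{j,k}$ alone, and a statement that is exactly what is consumed later in the proof of Proposition \ref{pro:iterated2}. The only points you should make explicit if you write this up are that the centres $g^{\infty}_h$ satisfy $I_d(g^{\infty}_h)\neq 0$ (so the one-point bound applies), which you essentially note via the uniform lower bound $\kappa$, and the degenerate case $I_d(g-g_0)=0$ in the perturbation step, where the bound is trivial.
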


\begin{proof}
For all $j \in \N$, $k \in \{0,\ldots 2^{j}-1\}$ and $M \in \N$, by auto-similarity and stationarity of increments for standard Hermite processes, we know that the random variables
\[ \widetilde{\Delta}^M_{j,k} + \widecheck{\Delta}^M_{j,k} \text{ and } 2^{-jH\left( \frac{k+1}{2^{j}}\right)} X_d\left(1,H \left(\frac{k+1}{2^{j}} \right)\right) \]
are equals in law.
We also know that there exist two deterministic constants $c_{t_0}^1>0$ and $y_{t_0}^1>0$ such that, for all $y>y_{t_0}^1$
\[ \mathbb{P}\left(|X_d(1,t_0)| \geq y \right) \geq \exp(-c_{t_0}^1 y^{\frac{2}{d}}).\]

For all $y>0$, we write
\begin{align*}
\mathbb{P}&\left((|\widetilde{\Delta}^M_{j,k}| \geq y 2^{-j H\left( \frac{k+1}{2^{j}}\right)}\right)\\ & \geq \mathbb{P}\left(\left|X_d\left(1,H \left(\frac{k+1}{2^{j}}\right)\right)\right| \geq 2y \right)- \mathbb{P}(|\widecheck{\Delta}^M_{j,k}| > y 2^{-j H\left( \frac{k+1}{2^{j}}\right)}).
\end{align*}
By Lemma \ref{thm:jason} and Proposition \ref{prop:l2norm}, we know that, for all $j,k$, and all $M$ and $y$ sufficiently large,
\begin{align*}
 \mathbb{P}\left(|\widecheck{\Delta}^M_{j,k}| > y 2^{-j H\left( \frac{k+1}{2^{j}}\right)}\right) &\leq \exp\left(-c_d \left(y c^{-1} M^{\frac{1-H\left(\frac{k+1}{2^{j}}\right)}{d}} \right)^\frac{2}{d} \right) \\
 & \leq \exp\left(-c_d \left(y c^{-1} M^{\frac{1- \sup K}{d}} \right)^\frac{2}{d} \right)
\end{align*}
As $1-\sup K>0$, if $M$ is large enough, one can then reach
\[ \mathbb{P}\left(|\widecheck{\Delta}^M_{j,k}| > y 2^{-j H\left( \frac{k+1}{2^{j}}\right)}\right) \leq \frac{1}{4} \exp(- 3 c_{t_0}^1 y^\frac{2}{d}).\]
On the other hand, we have
\begin{align*}
\mathbb{P} &\left(\left|X_d\left(1,H \left(\frac{k+1}{2^{j}} \right)\right)\right| \geq 2y \right) \\& \geq \mathbb{P}\left(\left|X_d(1,t_0)\right| \geq 3y \right)-\mathbb{P}\left(\left|X_d(1,H(t_0))-X_d\left(1,H \left(\frac{k+1}{2^{j}}\right)\right)\right| > y \right).
\end{align*}
Using again Lemma \ref{thm:jason}, from Proposition \ref{prop:holdofgene}, we know that there exists a deterministic constant $c_2>0$, only depending on $d$, $K$ and $[0,1]$, such that, for all $j_0$ large enough,
\begin{align*}
\mathbb{P}&\left(\left|X_d(1,H(t_0))-X_d\left(1,H \left(\frac{k+1}{2^{j}}\right)\right)\right| > y \right) \\ & \leq \exp \left( - c_d\left\|X_d(1,H(t_0))-X_d\left(1,H \left(\frac{k+1}{2^j}\right)\right)\right\|_{L^2(\Omega)}^\frac{-2}{d} y^\frac{2}{d} \right) \\
& \leq \exp \left( - c_d c_2^\frac{-2}{d} \left|H(t_0)-H\left( \frac{k+1}{2^{j}}\right) \right|^\frac{-2}{d} y^\frac{2}{d} \right).
\end{align*}
The continuity of $H$ insures that, if $j$ is large enough,
\[ \mathbb{P}\left(\left|X_d(1,H(t_0))-X_d\left(1,H \left(\frac{k+1}{2^j}\right)\right)\right| > y \right) \leq \frac{1}{4} \exp(- 3 c_{t_0}^1 y^\frac{2}{d}).\]
Putting everything together, we conclude the existence of $c_{t_0}>0$ and $y_{t_0}>0$ with the desired property.
\end{proof}

Let us use this last Lemma to prove the positiveness of the limit in \eqref{eq:iterated}.

\begin{Prop}\label{pro:iterated2}
Given $d \in \N^*$, a compact set $K$ of $(\frac{1}{2},1)$ and a Hurst function $H \, : \, \R_+ \to K$ satisfying Condition \ref{condi} (b), there exists $\overline{\Omega}_2$, an event of probability $1$, such that on $\overline{\Omega}_2$, for (Lebesgue) almost every $t_0 \in \R_+$, we have
\[
 0<\limsup_{r \to 0^+} \frac{ \Osc(X_d^{H(\cdot)},[t_0-r,t_0+r] \cap \R_+ )}{r^{H(t_0)} (\log(\log r^{-1}))^\frac{d}{2}}.
\]
\end{Prop}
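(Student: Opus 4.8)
The plan is to mirror the lower–bound argument from the proof of Theorem~\ref{thm:lowerbound}, but with the scale-dependent parameters $M_\lambda$ replaced by a ``doubly logarithmic'' choice, so that the bound produced has the sharper $(\log\log r^{-1})^{d/2}$ normalization. As in the previous proofs it suffices to work on $[0,1)$, and by Fubini's theorem together with a countable intersection it is enough to show that, for each fixed $t_0\in[0,1)$, there is an event $\Omega_{t_0}$ of probability $1$ on which the displayed $\limsup$ is positive. Fix such a $t_0$ and let $c_{t_0},y_{t_0},j_0,M_0$ be the constants furnished by Lemma~\ref{lemma:lowerboundinproba}. I would fix an integer $\ell_d$ (analogous to the one in \eqref{eqn:gamma}) and, for $\lambda=\lambda_{j,k}\subseteq 3\lambda_{j_0}(t_0)$, set a constant $M=M_0$ (no longer growing with $j$) and work inside the fixed ``cube'' $3\lambda_{j_0}(t_0)$ so that all the enlarged intervals $\lambda_{j,k}^{M}$ needed stay inside it and the independence condition \eqref{cond:indep} can still be arranged among $\ell_d$ sibling intervals at each scale.

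The key probabilistic input is that, along a dyadic branch towards $t_0$, the dominant increments $\widetilde{\Delta}^{M}_{\lambda'}$ are large \emph{infinitely often at the right rate}. Concretely, using Lemma~\ref{lemma:lowerboundinproba} one gets, for $\lambda'\subseteq 3\lambda_{j_0}(t_0)$ at scale $j$,
\[
\mathbb{P}\!\left(|\widetilde{\Delta}^{M}_{\lambda'}|\geq c\,(\log j)^{d/2}\,2^{-jH(\frac{k'+1}{2^{j}})}\right)\geq \exp(-c_{t_0}c^{2/d}\log j)=j^{-c_{t_0}c^{2/d}},
\]
so for $c$ small enough this is not summable; choosing $\ell_d$ independent events among siblings at scale $j$ and running a (conditional) second Borel--Cantelli / Kochen--Stone argument along the branch shows that almost surely there are infinitely many scales $j$ and a $\lambda'\in 3\lambda_j(t_0)$ (indeed inside $\mathcal S_{\lambda,m}$ for a suitable $\lambda\in 3\lambda_j(t_0)$) with $|\widetilde{\Delta}^{M}_{\lambda'}|\geq c\,(\log j)^{d/2}\,2^{-jH(\frac{k'+1}{2^{j}})}$. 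Simultaneously, the Borel--Cantelli upper bounds \eqref{eq:osc2} and \eqref{eq:osc3} (with the constant $M_0$ in place of $M_\lambda$, which only makes them easier) give that, eventually, $|\widecheck{\Delta}^{M}_{\lambda'}|$ and $|\widehat{\Delta}_{\lambda'}|$ are at most $c' j^{d/2}$ times their $L^2$–norms; by Proposition~\ref{prop:l2norm}(2)--(3) and Condition~\ref{condi}(b) these are $o\big((\log j)^{d/2} 2^{-jH(\frac{k'+1}{2^{j}})}\big)$ — here the fixed $M$ makes $M^{(H-1)/d}$ a harmless constant and the $\gamma>H(t_0)$ regularity kills the $\widehat{\Delta}$ term. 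Hence $|\Delta_{\lambda'}|\geq \tfrac{c}{2}(\log j)^{d/2}2^{-jH(t_0)}$ for infinitely many $j$, and \eqref{eqn:defleader} together with $H(\frac{k'+1}{2^{j}})\to H(t_0)$ and $j\asymp \log r^{-1}$ converts this into positivity of the $\limsup$ in the statement.

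The main obstacle is the infinitely-often statement for the dominant increments: unlike in Theorem~\ref{thm:lowerbound}, where one only needed a \emph{lower bound on oscillations at every point} obtained from a Fatou/expectation argument over all dyadic descendants, here the target rate $(\log j)^{d/2}$ forces one to extract a genuine ``large deviation from below, infinitely often along one branch'' — this is where Lemma~\ref{lemma:lowerboundinproba} (the non-trivial uniform lower tail bound) and a careful handling of the conditional independence along the branch are essential. I would organize it by picking, at each scale $j$ along the branch to $t_0$, a family of $\ell_d$ sibling enlarged intervals satisfying \eqref{cond:indep}, noting that the events $\{|\widetilde{\Delta}^{M}_{T_j^\ell}|\geq c(\log j)^{d/2}2^{-j H}\}$ are then independent across $\ell$ for fixed $j$, and that events at well-separated scales are independent too; a Borel--Cantelli argument (with the divergence of $\sum_j j^{-c_{t_0}c^{2/d}}$ for small $c$) along a sparse subsequence of scales then yields the desired infinitely-often event, and one concludes as above. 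The final event $\overline{\Omega}_2$ is the intersection over $t_0$ in a countable dense set handled via Fubini, exactly as at the end of the proof of Proposition~\ref{pro:iterated1}.
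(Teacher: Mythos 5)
Your overall architecture is the right one and matches the paper's: reduce to a fixed $t_0$ via Fubini, use Lemma \ref{lemma:lowerboundinproba} with a \emph{fixed} enlargement parameter $M$ to get a uniform lower tail bound for the dominant variables, manufacture independence through disjoint enlarged intervals placed off the branch (condition \eqref{cond:indep}), run a second Borel--Cantelli to get the dominant increment large infinitely often at the rate $(\log j)^{d/2}2^{-j'H}$, and finish with \eqref{eqn:defleader}. However, there is a genuine gap in your treatment of the remainder term $\widecheck{\Delta}^{M}_{\lambda'}$. You propose to reuse the global almost-sure bounds \eqref{eq:osc2}--\eqref{eq:osc3}, whose threshold is $c'j^{d/2}$ times the $L^2$-norm, and you claim that with a fixed $M$ this is $o\bigl((\log j)^{d/2}2^{-j'H}\bigr)$. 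It is not: by Proposition \ref{prop:l2norm}(2), with $M=M_0$ fixed the resulting bound is of order $j^{d/2}M_0^{(\sup K-1)/d}2^{-j'H}$, which \emph{dominates} (rather than being dominated by) the lower bound $c(\log j)^{d/2}2^{-j'H}$ you extract for $\widetilde{\Delta}^{M}_{\lambda'}$; so the inequality $|\Delta_{\lambda'}|\geq \tfrac{c}{2}(\log j)^{d/2}2^{-jH(t_0)}$ does not follow. (Your $\widehat{\Delta}$ term is fine even with the $j^{d/2}$ factor, because $2^{-j\gamma}$ with $\gamma>H(t_0)$ absorbs it; the problem is only $\widecheck{\Delta}$.) The missing idea is precisely the ``localization'' the paper uses: since at scale $j$ only the $O(M)$ intervals $\lambda'\in S_{\lambda,\lfloor\log_2 M\rfloor+1}$ with $\lambda\in 3\lambda_j(t_0)$ matter, the union bound in the Borel--Cantelli step for $\widecheck{\Delta}$ and $\widehat{\Delta}$ involves only a bounded number of events per scale, so the almost-sure threshold can be taken $c_2(\log j)^{d/2}$ instead of $c'j^{d/2}$ (see \eqref{maj2:itera}--\eqref{maj3:itera}); then choosing $M$ large but fixed so that $c_2cM^{(\sup K-1)/d}<c_1/4$ makes the check term a fraction of the dominant one. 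Growing $M$ with $j$ as in Theorem \ref{thm:lowerbound} is not an alternative here, since it would degrade the rate back to the one of that theorem.

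Two secondary points. First, your claim that ``events at well-separated scales are independent too'' is not automatic: enlarged intervals attached to intervals containing (or adjacent to) $t_0$ at different scales are nested, not disjoint. Independence across scales must be engineered exactly as in the paper, by working at each scale inside the off-branch sibling $T_n$ of $\lambda_{j+n}(t_0)$ and descending $\lfloor\log_2 M\rfloor+1$ extra levels so that the enlargement stays inside $T_n$; the $T_n$'s being pairwise disjoint then gives \eqref{cond:indep}. Second, there is a tension between your ``sparse subsequence of scales'' and the divergence $\sum_j j^{-c_{t_0}c^{2/d}}=\infty$ you invoke: with only $\ell_d$ trials per scale, the per-scale success probability is $\asymp j^{-\theta}$ with $\theta\in(0,1)$, so a subsequence with bounded gaps still gives a divergent sum, but a geometric one does not. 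The paper's blocking ($j=m=2^p$, with $m$ independent trials in the block of scales $[2^p,2^{p+1}]$ and threshold $c_1\log(2m)^{d/2}$) removes this tension, since the block success probability tends to $1$. With these corrections your plan becomes essentially the paper's proof.
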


\begin{proof}
We use the same notation as in the proof of Theorem \ref{thm:lowerbound}. Let us fix $t_0 \in [0,1)$. If $M$ and $j$ are sufficiently large such that \eqref{eq:lowerboundinproba} holds for all large enough $y$, then, for any $m \in \N$ and $\lambda \in \mathcal{S}_{\lambda_{j}(t_0),m}$, let  $(I_n)_{0 \leq n \leq m}$ and $(T_n)_{1 \leq n \leq m}$ be the sequences of dyadic intervals with $I_0=\lambda_j(t_0)$; $I_m=\lambda$ and, for all $1 \leq n \leq m$, $I_{n-1}=I_{n} \cup T_n$. For all $1 \leq n \leq m$, let also $T_n^\star=\lambda_{k_n,j_n} \in S_{T_n, \lfloor \log_2(M)\rfloor +1}$ such that $(T^{\star}_{n})^M \subseteq T_n$. In particular, for all $n \neq n'$, $ (T^{\star}_{n})^M \cap (T^{\star}_{n'})^M= \emptyset$ and the random variables $(\widetilde{\Delta}^M_{T^{\star}_{n}})_{1 \leq n \leq m}$ are independent.

If $c_{t_0}>0$ is the constant given in Lemma \ref{lemma:lowerboundinproba} and $c_1$ is a deterministic constant such that $0<c_{t_0} c_1^\frac{2}{d}<1$, let us consider the event
\[    \mathcal{E}_{j,m}(t_0)= \left \{ \omega \in \Omega \, : \, \max_{1 \leq n \leq m} \left| \frac{\widetilde{\Delta}^M_{T^{\star}_{n}}}{2^{-j_n H \left( \frac{k_n+1}{2^{j_n}}\right)}} \right| \geq c_1 \log(2m)^\frac{d}{2} \right\}.\]
Using the independence of the random variables $(\widetilde{\Delta}^M_{T^{\star}_{n}})_{1 \leq n \leq m}$, Lemma \ref{lemma:lowerboundinproba} and the inequality $\log(1-x) \leq -x$ for all $x \in (0,1)$, we get, if $m$ is large enough,
\begin{align*}
\mathbb{P}(\mathcal{E}_{j,m}(t_0))&=1- \prod_{n=1}^m \mathbb{P} \left( \left| \frac{\widetilde{\Delta}^M_{T^{\star}_{n}}}{2^{-j_n H \left( \frac{k_n+1}{2^{j_n}}\right)}} \right| < c_1 \log(2m)^\frac{d}{2}\right) \\
&  \geq 1-(1-\exp(-c_{t_0} c_1^\frac{2}{d} \log(2m))^m \\
&\geq 1-\exp \left( \frac{m}{(2m)^{c_{t_0}c_1^\frac{2}{d}}} \right)\\
& = 1- \exp \left( \frac{m^{1-c_tc_1^\frac{2}{d}}}{2^{c_{t_0}c_1^\frac{2}{d}}} \right).
\end{align*}
Thus, as $0<c_{t_0} c_1^\frac{2}{d}<1$, we get
\[ \sum_{p \in \N} \mathbb{P}(\mathcal{E}_{2^p,2^p}(t_0)) = \infty\]
and Borel-Cantelli Lemma, combined with the independence of the events $(\mathcal{E}_{2^p,2^p}(t_0))_p$ entails
\[ \mathbb{P} \left( \limsup_{p \to + \infty} \mathcal{E}_{2^p,2^p}(t_0) \right)=1.\]
In other words, there exists $\Omega_{t_0}^1$, an event of probability $1$, such that, for all $\omega \in \Omega_{t_0}^1$, there are infinitely many $j \in \N$ such that, there exist $\lambda \in 3 \lambda_j (t_0)$ and \linebreak $\lambda'_{j',k'} \in S_{\lambda,\lfloor \log_2(M) \rfloor +1}$ for which
\begin{equation}\label{maj1:itera}
|\widetilde{\Delta}^M_{\lambda'}(\omega)| \geq c_1 \log(j)^\frac{d}{2} 2^{-j' H\left( \frac{k'+1}{2^{j'}}\right)}.
\end{equation} 

On the other hand, if $c_2$ is a deterministic constant such that $c_2>c_d^\frac{-2}{d}$, we have, by Lemma \ref{thm:jason} and Proposition \ref{prop:l2norm}, for all $j \in \N$
\begin{align*}
\mathbb{P} &\left( \exists \lambda \in 3 \lambda_j(t_0), \lambda' \in S_{\lambda,\lfloor \log_2(M) \rfloor +1} \, : \, \left| \widecheck{\Delta}^M_{\lambda'}\right| \geq c_2 \log(j)^\frac{d}{2} \left\| \widecheck{\Delta}^M_{\lambda'}\right\|_{L^2(\Omega)} \right) \\
& \leq 3 M \exp(-c_d c_2^\frac{d}{2} \log(j)). 
\end{align*}
The fact that $c_2>c_d^\frac{-2}{d}$ and Borel-Cantelli Lemma entails the existence of $\Omega_{t_0}^2$, an event of probability $1$, such that, for all $\omega \in \Omega_{t_0}^2$, there exists $J \in \N$ for which, for all $j \geq J$, $\lambda \in 3 \lambda_j (t_0)$ and  $\lambda'_{j',k'} \in S_{\lambda,\lfloor \log_2(M) \rfloor +1}$,
\begin{equation}\label{maj2:itera}
|\widecheck{\Delta}^M_{\lambda'}(\omega)| \leq c_2 \left\| \widecheck{\Delta}^M_{\lambda'}\right\|_{L^2(\Omega)} \leq c_2 c M^\frac{H \left( \frac{k'+1}{2^{j'}}\right)-1}{d}\log(j)^\frac{d}{2} 2^{-j' H\left( \frac{k'+1}{2^{j'}}\right)},
\end{equation} 
where $c>0$ is the deterministic constant given by Proposition \ref{prop:l2norm}. Similarly, there exists $\Omega_{t_0}^3$, an event of probability $1$, such that, for all $\omega \in \Omega_{t_0}^3$, there exists $J \in \N$ for which, for all $j \geq J$, $\lambda \in 3 \lambda_j (t_0)$ and  $\lambda' \in S_{\lambda,\lfloor \log_2(M) \rfloor +1}$,
\begin{equation}\label{maj3:itera}
|\widehat{\Delta}_{\lambda'}(\omega)| \leq c_2 c \log(j)^\frac{d}{2} \Osc(H,\lambda').
\end{equation} 
As $\sup K <1$, by increasing $M$ if necessary, we can reach, $c_2c M^\frac{\sup K -1}{d} < \frac{c_1}{4}$. Also, from the Condition \ref{condi} (b) on $H$, there exits $c_3>0$ and $\gamma >H(t_0)$ such that, for all $\lambda' \in S_{\lambda,\lfloor \log_2(M) \rfloor +1}$,
\begin{align*}
\Osc(H,\lambda') & \leq c_3 2^{-j \gamma} \leq c_3 M^{\gamma} 2^{-j' \gamma}.
\end{align*}
Increasing $j$ if necessary, we can reach
\[ c_2 c\Osc(H,\lambda') \leq \frac{c_1}{4} 2^{-j' H\left( \frac{k'+1}{2^{j'}}\right)}.\]

If inequalities \eqref{maj1:itera}, \eqref{maj2:itera} and \eqref{maj3:itera} hold, we thus have, for all $M$ and $j$ big enough
\begin{align*}
|\Delta_{\lambda'}(\omega)| & \geq \frac{c_1}{2} \log(j)^\frac{d}{2} 2^{-j' H\left( \frac{k'+1}{2^{j'}}\right)} \\
& \geq \frac{c_1}{2} \log(j)^\frac{d}{2} M^{-H\left( \frac{k'+1}{2^{j'}}\right)}2^{-j H\left( \frac{k'+1}{2^{j'}}\right)}.
\end{align*}
In total, from the Condition \ref{condi} (b) for $H$ and inequality \eqref{eqn:defleader}, we deduce that, for all $t_0 \in [0,1)$, on the event $\Omega_{t_0}^1 \cap \Omega_{t_0}^2 \cap \Omega_{t_0}^3$ of probability $1$, we have
\[ \limsup_{j \to + \infty} \frac{\Osc(X_d^{H(\cdot)},[t_0-22^{-j},t_0+22^{-j}] )}{2^{-j H(t_0)} \log(j)^{\frac{d}{2}}}>0.\]
The conclusion follows again by countable intersection and Fubini theorem.
\end{proof}

Theorem \ref{thm:iterated} is then an immediate consequence of Propositions \ref{pro:iterated1} and \ref{pro:iterated2}.

\begin{proof}[Proof of Theorem \ref{thm:iterated}]
If $\overline{\Omega}_1$ and $\overline{\Omega}_2$ are the events of probability $1$ given by Proposition \ref{pro:iterated1} and \ref{pro:iterated2} respectively, on $\overline{\Omega}_1 \cap \overline{\Omega}_2$, we have, for (Lebesgue) almost every $t_0 \in \R_+$,
\[  0<\limsup_{r \to 0^+} \frac{ \Osc(X_d^{H(\cdot)},[t_0-r,t_0+r] \cap \R_+ )}{r^{H(t_0)} (\log(\log r^{-1}))^\frac{d}{2}} < \infty.\]

\end{proof}
 
\section{Local asymptotic self-similarity} \label{sec:locaasym}

Let us start this section by showing that the multifractional Hermite process $\{X_d^{H(\cdot)}(t) \}_{t \in \R_+}$ is weakly locally asymptotically self-similar. Our main ingredient is the following lemma, which is sometimes refereed as Slutsky's theorem (see for instance \cite[Page 318]{MR2059709}).

\begin{Lemma}\label{lemma:slutsky}
Let $(X_j)_j$ and $(Y_j)_j$ be two sequences of random variables such that $(X_j)_j$ converge in distribution to a random variable $X$ and $(Y_j)_j$ converges in probability to a deterministic constant $c$, then the sequence $(X_j+Y_j)_j$ converges in distribution to $X + c$.
\end{Lemma}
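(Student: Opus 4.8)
The plan is to verify the convergence through the classical test-function criterion for convergence in distribution: it suffices to show that $\E[f(X_j + Y_j)] \to \E[f(X+c)]$ for every bounded and uniformly continuous $f \colon \R \to \R$, since this characterises convergence in law (a form of the portmanteau theorem). Accordingly, fix such an $f$ and split
\[ \E[f(X_j+Y_j)] - \E[f(X+c)] = \big(\E[f(X_j+Y_j)] - \E[f(X_j+c)]\big) + \big(\E[f(X_j+c)] - \E[f(X+c)]\big). \]
The second term is the easy one: the map $x \mapsto f(x+c)$ is bounded and continuous, so $\E[f(X_j+c)] \to \E[f(X+c)]$ follows directly from the assumption that $X_j \to X$ in distribution.

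For the first term I would use the uniform continuity of $f$ to absorb the deviation of $Y_j$ from $c$. Given $\varepsilon > 0$, choose $\delta > 0$ so that $|y-c| < \delta$ forces $|f(u+y) - f(u+c)| \leq \varepsilon$ for all $u \in \R$. Then, splitting the expectation of $|f(X_j+Y_j) - f(X_j+c)|$ according to whether $|Y_j - c|$ is smaller or larger than $\delta$, one obtains
\[ \big| \E[f(X_j+Y_j)] - \E[f(X_j+c)] \big| \leq \varepsilon + 2\|f\|_\infty \, \mathbb{P}(|Y_j - c| \geq \delta). \]
Since $Y_j \to c$ in probability, $\mathbb{P}(|Y_j - c| \geq \delta) \to 0$, so $\limsup_j \big| \E[f(X_j+Y_j)] - \E[f(X_j+c)] \big| \leq \varepsilon$; letting $\varepsilon \to 0^+$ shows this term also tends to $0$. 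Adding the two estimates gives $\E[f(X_j+Y_j)] \to \E[f(X+c)]$ for every bounded uniformly continuous $f$, hence $X_j + Y_j \to X + c$ in distribution.

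There is no genuine obstacle here, the statement being classical, but the one point that requires a little care is that one cannot simply replace $Y_j$ by $c$ on the event where they are close: the contribution of the exceptional set $\{|Y_j - c| \geq \delta\}$ must be bounded uniformly in $j$, which is exactly what the factor $2\|f\|_\infty$ (boundedness of $f$) together with convergence in probability provides. An alternative route would work directly with distribution functions, bracketing $\mathbb{P}(X_j+Y_j \leq x)$ between $\mathbb{P}(X_j + c \leq x \pm \delta) \mp \mathbb{P}(|Y_j-c| > \delta)$ at continuity points $x \pm \delta$ of the limit law and then letting $\delta \to 0^+$; the test-function argument above avoids the bookkeeping with continuity points and is the one I would write out.
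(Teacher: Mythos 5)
Your argument is correct and complete: the test-function (portmanteau) criterion with bounded uniformly continuous $f$, the decomposition into $\E[f(X_j+Y_j)]-\E[f(X_j+c)]$ and $\E[f(X_j+c)]-\E[f(X+c)]$, and the bound $\varepsilon + 2\|f\|_\infty\,\mathbb{P}(|Y_j-c|\geq\delta)$ coming from uniform continuity plus convergence in probability are exactly the standard proof of this version of Slutsky's theorem. The paper itself does not prove the lemma at all; it simply cites a textbook reference, so your self-contained argument is, if anything, more than what the paper provides, and there is no gap to report.
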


\begin{Prop}
Let $d \in \N^*$, $K$ be a compact set of $(\frac{1}{2},1)$ and $H \, : \, \R_+ \to K$ be a Hurst function  satisfying Condition \ref{condi} (b). For all $t_0 \geq 0$, the multifractional Hermite process $\{X_d^{H(\cdot)}(t) \}_{t \in \R_+}$ is weakly locally asymptotically self-similar of order $H(t_0)$ at $t_0$ with tangent process $\{X_d(t,H(t_0))  \}_{t \geq 0}$, the Hermite process of Hurst parameter $H(t_0)$.
\end{Prop}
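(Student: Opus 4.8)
The plan is to fix $t_0 \geq 0$, split the rescaled increment of $X_d^{H(\cdot)}$ around $t_0$ into a piece whose law does not depend on the scaling parameter and a remainder that vanishes in probability, and then apply Slutsky's lemma. Write $h_0 := H(t_0)$ and recall that $X_d^{H(\cdot)}(t_0 + \varepsilon t) = X_d(t_0 + \varepsilon t, H(t_0 + \varepsilon t))$. For $\varepsilon \in (0,1]$ and $t \geq 0$ I would decompose
\[
\varepsilon^{-h_0}\bigl(X_d^{H(\cdot)}(t_0 + \varepsilon t) - X_d^{H(\cdot)}(t_0)\bigr) = Z_\varepsilon(t) + W_\varepsilon(t),
\]
with $Z_\varepsilon(t) := \varepsilon^{-h_0}(X_d(t_0 + \varepsilon t, h_0) - X_d(t_0, h_0))$ and $W_\varepsilon(t) := \varepsilon^{-h_0}(X_d(t_0 + \varepsilon t, H(t_0 + \varepsilon t)) - X_d(t_0 + \varepsilon t, h_0))$. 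For the first term I would use that $\{X_d(t, h_0)\}_{t \in \R_+}$ is the standard Hermite process of order $d$ and Hurst parameter $h_0$, hence has stationary increments and is $h_0$-self-similar; combining these two facts shows that, for every $\varepsilon > 0$, the process $\{Z_\varepsilon(t)\}_{t \in \R_+}$ has the same finite-dimensional distributions as $\{X_d(t, h_0)\}_{t \in \R_+}$, the process announced as tangent.

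It then remains to show that $W_\varepsilon(t) \to 0$ in probability as $\varepsilon \to 0^+$ for each fixed $t \geq 0$. Fixing $a > 0$ and restricting to $t \in [0,a]$, the time $t_0 + \varepsilon t$ stays in the compact interval $I := [0, t_0 + a]$ for all $\varepsilon \in (0,1]$, so Proposition \ref{prop:holdofgene}, applied with both time arguments equal to $t_0 + \varepsilon t$ (so the $|t-u|^{\min}$ term drops), yields a constant $c_2 = c_2(d, K, I)$ with
\[
\|W_\varepsilon(t)\|_{L^2(\Omega)} \leq c_2\, \varepsilon^{-h_0}\, |H(t_0 + \varepsilon t) - H(t_0)|.
\]
By Condition \ref{condi} (b) there are $\gamma \in (h_0, 1)$ and $R, C > 0$ with $|H(t_0 + \varepsilon t) - H(t_0)| \leq C(\varepsilon t)^\gamma$ once $\varepsilon t < R$, whence $\|W_\varepsilon(t)\|_{L^2(\Omega)} \leq c_2 C\, t^\gamma\, \varepsilon^{\gamma - h_0} \to 0$ since $\gamma > h_0$. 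Thus $W_\varepsilon(t) \to 0$ in $L^2(\Omega)$, a fortiori in probability, and coordinatewise $(W_\varepsilon(t_1), \ldots, W_\varepsilon(t_m)) \to 0$ in probability for any finite family $t_1, \ldots, t_m \geq 0$.

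The multivariate form of Slutsky's lemma (Lemma \ref{lemma:slutsky}) then finishes the argument: $(Z_\varepsilon(t_j))_{1 \leq j \leq m}$ has, for every $\varepsilon$, the law of $(X_d(t_j, h_0))_{1 \leq j \leq m}$, while $(W_\varepsilon(t_j))_{1 \leq j \leq m} \to 0$ in probability, so $(Z_\varepsilon(t_j) + W_\varepsilon(t_j))_{1 \leq j \leq m}$ converges in distribution to $(X_d(t_j, h_0))_{1 \leq j \leq m}$; since $m$ and the $t_j$ are arbitrary, this is exactly the claimed weak local asymptotic self-similarity of order $H(t_0)$ at $t_0$ with tangent process the Hermite process of order $d$ and Hurst parameter $H(t_0)$. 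There is no serious obstacle here: the only points requiring care are that the constant $c_2$ is chosen uniformly by first fixing the compact interval $I$, and that one invokes the vector-valued form of Lemma \ref{lemma:slutsky} with limiting constant $0 \in \R^m$ — both routine. Note that, in contrast with most of the other proofs in the paper, this one needs no hypercontractivity or high-moment estimate, only the $L^2$ bound of Proposition \ref{prop:holdofgene}.
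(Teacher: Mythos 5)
Your proposal is correct and follows essentially the same route as the paper: the same decomposition into $Z_\varepsilon$ (handled by self-similarity and stationarity of increments of the standard Hermite process) and $W_\varepsilon$ (shown to vanish in $L^2(\Omega)$ via Proposition \ref{prop:holdofgene} and Condition \ref{condi} (b)), concluded by Slutsky's lemma. Your write-up is in fact slightly more careful on two minor points — the Hölder bound $|H(t_0+\varepsilon t)-H(t_0)|\leq C(\varepsilon t)^\gamma$ (the paper has an evident typo $\varepsilon^{-\gamma}$ there) and the explicit appeal to the vector-valued form of Slutsky for finite-dimensional distributions — but these do not constitute a different argument.
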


\begin{proof}
Let us fix $t_0 \geq 0$. For all $t \geq 0$ and $\varepsilon>0$, we write
\begin{align*}
\varepsilon^{-H(t_0)} &\left( X_d^{H(\cdot)}(t_0+\varepsilon t)-X_d^{H(\cdot)}(t_0) \right) \\ &= \varepsilon^{-H(t_0)} \left( X_d(t_0+\varepsilon t,H(t_0+\varepsilon t))-X_d(t_0+\varepsilon t, H(t_0)) \right) \\ & \quad + \varepsilon^{-H(t_0)} \left( X_d(t_0+\varepsilon t, H(t_0))-X_d(t_0, H(t_0)) \right).
\end{align*}
First, from the well-known self-similarity and stationary of increments for the standard Hermite process, we know that the process
\[ \{ \varepsilon^{-H(t_0)} \left( X_d(t_0+\varepsilon t, H(t_0))-X_d(t_0, H(t_0)) \right)  \}_{ t \geq 0}\] 
is equal in finite-dimensional distribution to
\[ \{ X_d(t,H(t_0))  \}_{ t \geq 0}.\]
On the other hand, from Proposition \eqref{prop:holdofgene}, we know that, for all $t \geq 0$ and $\varepsilon>0$,
\begin{align*}
\| \varepsilon^{-H(t_0)} & \left( X_d(t_0+\varepsilon t,H(t_0+\varepsilon t))-X_d(t_0+\varepsilon t, H(t_0)) \right)\|_{L^2(\Omega)} \\
& \leq c_2 \varepsilon^{-H(t_0)} \left|H(t_0+\varepsilon t)-H(t_0) \right|. 
\end{align*}
If $t \geq 0$ is fixed, Condition \ref{condi} (b) insures that one can give find $\gamma> H(t_0)$ and $c>0$ such that, for all $\varepsilon>0$ sufficiently small
\[ \left|H(t_0+\varepsilon t)-H(t_0) \right| \leq c \varepsilon^{-\gamma}. \]
In particular, for all fixed $t \geq 0$, the sequence of random variables 
\[ \left(\varepsilon^{-H(t_0)}  \left( X_d(t_0+\varepsilon t,H(t_0+\varepsilon t))-X_d(t_0+\varepsilon t, H(t_0)) \right) \right)_{\varepsilon >0}\]
converges to $0$ in $L^2(\Omega)$, and thus in probability, when $\varepsilon \to 0^+$.

The conclusion follows from Lemma \ref{lemma:slutsky}.
\end{proof}

Now, we want to show that the local asymptotic self-similarity also holds in the strong sense. As already explained in Section \ref{sec:strategy}, it suffices to show that \eqref{eq:prohorov} holds, with $X=X_d^{H(\cdot)}$. On this purpose, we recall the Garsia-Rodemich-Rumsey inequality in the following lemma, a proof can be read in \cite{MR267632}.

\begin{Lemma}
Let $\Psi$ be a non-negative even function on $\R$ and $\rho$ be a non-negative even function on $[-1,1]$. Assume also that both $\Psi$ and $\rho$ are non decreasing on $\R_+$. If $f$ is a continuous function for which
\[\iint_{[0,1]^2} \Psi \left( \frac{f(x)-f(y)}{\rho(x-y)}  \right) \, dxdy \leq B < \infty,\]
then, for all $s,t \in [0,1]$,
\[ |f(s)-f(t)| \leq 8 \int_0^{|s-t|} \Psi^{-1} \left( \frac{4 B}{u^2} \right) \, d\rho(u).\]
\end{Lemma}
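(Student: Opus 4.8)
The plan is to give the classical Garsia--Rodemich--Rumsey argument; since the lemma is used only as a black box and a self-contained proof appears in \cite{MR267632}, I will describe the skeleton and indicate where the constants $8$ and $4$ come from rather than chase every factor. Throughout, $\Psi^{-1}$ denotes the non-decreasing generalized inverse $\Psi^{-1}(y):=\sup\{u\geq 0:\Psi(u)\leq y\}$, and I freely assume $\rho(0^{+})=0$, $\rho>0$ on $(0,1]$, and that the right-hand side is finite, since otherwise there is nothing to prove. The first step is to introduce the \emph{energy density} of $f$: for $x\in[0,1]$ put
\[ p(x):=\int_0^1 \Psi\!\left(\frac{f(x)-f(y)}{\rho(x-y)}\right)dy . \]
Since $\rho>0$ off the diagonal and $\Psi$ is monotone, the integrand is a.e.\ well defined and measurable on $[0,1]^2$, so Fubini gives $\int_0^1 p(x)\,dx\leq B$.

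Next, fix $s<t$ in $[0,1]$, set $d:=t-s$, and build a sequence $t_0:=t>t_1>t_2>\cdots$ in $(s,t]$ with $t_n\downarrow s$ by a two-fold pigeonhole step. Writing $d_n:=t_n-s$ and choosing the scales decreasing (roughly geometrically), look, given $t_n$, inside the subinterval $J_n:=(s+d_{n+1},\,s+d_n)$. By Chebyshev's inequality, $\{x\in J_n:p(x)>2B/|J_n|\}$ has Lebesgue measure $<|J_n|/2$, and likewise $\{x\in J_n:\Psi(|f(t_n)-f(x)|/\rho(t_n-x))>2p(t_n)/|J_n|\}$ has measure $<|J_n|/2$; hence $J_n$ contains a point $t_{n+1}$ avoiding both sets. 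This propagates the bound $p(t_{n+1})\leq 2B/|J_n|$ and, combining the second property with $p(t_n)\leq 2B/|J_{n-1}|$ inherited from the previous step and $\rho(t_n-t_{n+1})\leq\rho(|J_n|)$, gives
\[ |f(t_n)-f(t_{n+1})|\;\leq\;\rho(|J_n|)\,\Psi^{-1}\!\left(\frac{4B}{|J_{n-1}|\,|J_n|}\right). \]

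To conclude, use continuity of $f$: since $t_n\to s$ we have $f(t_n)\to f(s)$, hence $|f(t)-f(s)|\leq\sum_{n\geq 0}|f(t_n)-f(t_{n+1})|$, and each summand is bounded as above. Comparing this series with $\int_0^{d}\Psi^{-1}(4B/u^{2})\,d\rho(u)$ --- using that $u\mapsto\Psi^{-1}(4B/u^{2})$ is non-decreasing, hence at least $\Psi^{-1}(4B/d_n^{2})$ on the block $[d_{n+1},d_n]$, and that the scales $d_n$ can be arranged so that $|J_{n-1}|,|J_n|$ and $\rho(|J_n|)$ are controlled, respectively, by $d_n$ and by $\rho(d_n)-\rho(d_{n+1})$ --- dominates $|f(t)-f(s)|$ by a fixed multiple of that integral, and tracking the accumulated numerical losses (the two Chebyshev thresholds and these comparisons) yields the universal constant $8$. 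The delicate point is precisely this last bookkeeping: pinning down the choice of the scales $d_n$ so that every ``controlled by'' becomes a genuine inequality with the advertised constants $8$ and $4$. I would carry this out following \cite{MR267632}.
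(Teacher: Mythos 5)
The paper never proves this lemma: it is the Garsia--Rodemich--Rumsey inequality, quoted as a black box with the citation \cite{MR267632}, so what you are really doing is reconstructing the argument of that reference. Your skeleton (energy density $p$, Chebyshev/pigeonhole selection of a chain of points, telescoping via continuity, block-by-block comparison with the Stieltjes integral) is the right one, but as written it contains a step that fails: you anchor the chain at $t_0:=t$, and the estimate for the very first link requires $\Psi\bigl(|f(t_0)-f(t_1)|/\rho(t_0-t_1)\bigr)\le 2p(t_0)/|J_0|$, which is useless because nothing controls $p(t)$ at the prescribed point $t$ --- the hypothesis only gives $\int_0^1 p(x)\,dx\le B$, and $p$ may be arbitrarily large (even infinite) at a given point while the double integral stays below $B$. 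Your own recursion acknowledges this implicitly: the bound $p(t_n)\le 2B/|J_{n-1}|$ is ``inherited from the previous step'', and at $n=0$ there is no previous step. The classical proof avoids this by never starting a chain at a prescribed point: one first selects an \emph{interior} anchor $t_0$ with $p(t_0)\le B$ (Chebyshev once more), then runs one chain from $t_0$ down to one endpoint and a mirrored chain up to the other, each endpoint being reached only in the limit using continuity of $f$; each chain is bounded by $4\int_0^{|s-t|}\Psi^{-1}(4B/u^2)\,d\rho(u)$, and the constant $8$ is exactly $4+4$ from the two chains --- not, as you suggest, an accumulation of losses along a single chain.

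Beyond this, the part you explicitly defer is the actual content of the proof: the recursive choice of scales (classically one sets $\rho(d_{n+1})=\frac12\rho(t_n)$ and picks $t_{n+1}<d_{n+1}$ avoiding the two bad sets, which gives $\rho(t_n-t_{n+1})\le\rho(t_n)=2\rho(d_{n+1})\le 4\bigl(\rho(d_{n+1})-\rho(d_{n+2})\bigr)$ and hence the comparison of each summand with $4\int_{d_{n+2}}^{d_{n+1}}\Psi^{-1}(4B/u^2)\,d\rho(u)$), together with the reduction of general $s,t$ to the unit interval by rescaling $\tilde f(u)=f(s+u(t-s))$, $\tilde\rho(u)=\rho(u(t-s))$. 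Saying ``I would carry this out following \cite{MR267632}'' leaves the proposal an outline rather than a proof; since the paper itself only cites the same reference, that is defensible in context, but the outline needs the anchoring correction above --- without it the chain cannot even be started, and the advertised constants $8$ and $4$ cannot be recovered.
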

Applying this last Lemma to the functions $\Psi \, : \, u \mapsto |u|^p$ and $\rho \, : \,  \mapsto |u|^{\alpha+ \frac{1}{p}}$, for $p \geq 1$ and $\alpha \geq \frac{1}{p}$ then, with an obvious change of variable, we conclude that, for all $a >0$, there exists a deterministic constant $c_{a,p,\alpha}$ such that, for any $f \in C([0,a],\R)$ and $t \in [0,a]$
\begin{equation}\label{eqn:forlass}
|f(t)-f(0)|^p \leq c_{a,p,\alpha} t^{\alpha p-1} \iint_{[0,a]^2} |f(r)-f(v)|^p |r-v|^{-\alpha p-1} \, dr dv.
\end{equation}

We use this fact to prove Theorem \ref{thm:localasym}.

\begin{proof}[Proof of Theorem \ref{thm:localasym}]
It remains us to prove the strong local asymptotic self-similarity in the case where the Hurst function $H$ satisfies Condition \ref{condi} (c). Let us fix $a>0$ and $t_0 \geq 0$. For all $\varepsilon, \eta, \delta>0$, we set
\[ \mathbb{P}(\varepsilon,\eta,\delta) := \mathbb{P} \left( \sup_{s,t \in [0,a], |t-s| \leq \eta} \left| \frac{X^{H(\cdot)}(t_0+\varepsilon t)-X^{H(\cdot)}(t_0+\varepsilon s)}{\varepsilon^{H(t_0)}} \right| \geq \delta \right). \]
We have to show that, for all $\delta>0$, $\lim_{\eta \to 0^+} \limsup_{\varepsilon \to 0^+} \mathbb{P}(\varepsilon,\eta,\delta)=0$.

Of course, the Markov inequality entails, for any $p \geq 1$,
\[ \mathbb{P}(\varepsilon,\eta,\delta) \leq \delta^{-p} \varepsilon^{-p H(t_0)} \mathbb{E} \left[ \sup_{s,t \in [0,a], |t-s| \leq \eta} \left| X^{H(\cdot)}(t_0+\varepsilon t)-X^{H(\cdot)}(t_0+\varepsilon s) \right|^p\right].\]
Then, we use inequality \eqref{eqn:forlass} to write, for $\alpha \geq \frac{1}{p}$,
\begin{align*}
\mathbb{E} &\left[ \sup_{s,t \in [0,a], |t-s| \leq \eta} \left| X^{H(\cdot)}(t_0+\varepsilon t)-X^{H(\cdot)}(t_0+\varepsilon s) \right|^p\right] \\
& \leq c_{a,p,\alpha} \eta^{\alpha p-1} \iint_{[0,a]^2} \mathbb{E}\left[ \left| X^{H(\cdot)}(t_0+\varepsilon t)-X^{H(\cdot)}(t_0+\varepsilon s) \right|^p\right] |t-s|^{-\alpha p -1} \, ds dt.
\end{align*}
Moreover, we know from Corollary \ref{cor:lpnorm} that, for all $s,t \in [0,a]$,
\begin{align*}
\mathbb{E}&\left[ \left| X^{H(\cdot)}(t_0+\varepsilon t)-X^{H(\cdot)}(t_0+\varepsilon s) \right|^p\right] \\
& \leq \left( (\varepsilon |t-s|)^{\min \{H(t_0+ \varepsilon t),H(t_0 + \varepsilon s)\}} + |H(t_0+ \varepsilon t)- H(t_0 + \varepsilon s)|\right)^p.
\end{align*}
By Condition \ref{condi} (c), there exits a deterministic  constant $c>0$ such that, for all $\varepsilon>0$ sufficiently small and $s,t \in [0,a]$, $s \neq t$,
\[|H(t_0+ \varepsilon t)- H(t_0 + \varepsilon s)| \leq c (\varepsilon |t-s|)^{H(t_0)} \leq c \varepsilon^{H(t_0)} |t-s|^{\inf K} \]
and
\begin{align*}
(\varepsilon |t-s|)^{\min \{H(t_0+ \varepsilon t),H(t_0 + \varepsilon s)\}}\leq \varepsilon^{H(t_0)}  |t-s|^{\inf K} \varepsilon^{-(a \varepsilon)^{H(t_0)}} \leq 2 \varepsilon^{H(t_0)}  |t-s|^{\inf K}
\end{align*}
In total,  we have obtained that, for all $\varepsilon>0$ sufficiently small,
\begin{align*}
\mathbb{P}(\varepsilon,\eta,\delta) \leq 2 c_{a,p,\alpha} \delta^{-p}  \eta^{\alpha p-1} \iint_{[0,a]^2} |t-s|^{p(\inf K - \alpha)-1} \, ds dt.
\end{align*}
One can then choose, for instance, $\alpha = \frac{\inf K}{2}$ and $p= \frac{2}{\alpha}$ in order that the last integral is finite, because $p(\inf K - \alpha)-1=1$, and, as $\alpha > \frac{1}{p}$,
\[\lim_{\eta \to 0^+} \limsup_{\varepsilon \to 0^+} \mathbb{P}(\varepsilon,\eta,\delta)=0. \]
\end{proof}

\section{Fractal dimensions of the graph} \label{sec:dim}

Given a compact interval $I \subset \R_+$, let us start by providing an upper bound for the box-counting dimensions of the set $ \mathcal{G}_d(I)$. From the results proved in Sections \ref{sec:strategy} and \ref{sect:modulus}, it is in fact an easy task, thanks to the following lemma, see \cite[Corollary 11.2]{MR3236784} for a proof.

\begin{Lemma}\label{lemma:falco}
Let $I \subset \R_+$ be a compact interval and $f \, : \, I \to \R$ be a continuous function for which there exist $c \geq 0$ and $1 \leq \alpha \leq 2$ such that, for all $s,t \in I$,
\[ |f(s)-f(t)| \leq c |t-s|^{2-\alpha},\]
then
\[ \overline{\dim}_{\mathcal{B}} \left(\{ (t, X_d^{H(\cdot)}(t)) \, : \, t \in I\} \right) \leq \alpha.\]
\end{Lemma}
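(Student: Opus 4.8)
The plan is to bound directly the covering number of the graph $\Gamma:=\{(t,f(t)):t\in I\}$ by squares of a prescribed small side length and then pass to the $\limsup$ defining $\overline{\dim}_{\mathcal{B}}$. Without loss of generality I would take $I=[0,L]$ with $L>0$. First I would fix $\delta\in(0,1)$ and cut $I$ into the $N:=\lceil L/\delta\rceil$ subintervals $I_k:=[k\delta,(k+1)\delta]\cap I$, $0\le k\le N-1$, each of length at most $\delta$.

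The key point is that the hypothesis controls the oscillation of $f$ on each piece: for $s,t\in I_k$ one has $|f(s)-f(t)|\le c|t-s|^{2-\alpha}\le c\delta^{2-\alpha}$, so the portion of $\Gamma$ lying above $I_k$ is contained in a rectangle of base $\delta$ and height $c\delta^{2-\alpha}$, which is covered by at most $c\delta^{1-\alpha}+2$ squares of side $\delta$ (stacked in a single vertical column). Summing over $k$, the whole graph is covered by at most
\[
N(c\delta^{1-\alpha}+2)\le(L\delta^{-1}+1)(c\delta^{1-\alpha}+2)
\]
such squares, and since $1\le\alpha$ gives $\delta^{1-\alpha}\ge1$ and $\delta^{-1}\le\delta^{-\alpha}$ for $\delta<1$, the right-hand side is at most $C\delta^{-\alpha}$ with $C=C(L,c,\alpha)$ for all small $\delta$. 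This is exactly where the hypothesis $\alpha\ge1$ is used: it ensures that neither the number of columns $(\sim\delta^{-1})$ nor the number of squares per column $(\sim\delta^{1-\alpha})$ makes the product exceed order $\delta^{-\alpha}$.

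Finally I would rewrite this in the language of the definition: squares of side $\delta$ have diameter $\delta\sqrt{2}$, hence $N_{\delta\sqrt{2}}(\Gamma)\le C\delta^{-\alpha}$, and since replacing the mesh size $\delta\sqrt{2}$ by an arbitrary $\varepsilon\to0^+$ only changes multiplicative constants, one obtains
\[
\overline{\dim}_{\mathcal{B}}(\Gamma)=\limsup_{\varepsilon\to0^+}\frac{\log N_\varepsilon(\Gamma)}{-\log\varepsilon}\le\limsup_{\delta\to0^+}\frac{\log(C\delta^{-\alpha})}{-\log(\delta\sqrt{2})}=\alpha.
\]
I do not anticipate any genuine obstacle here, as this is the classical covering estimate for graphs of Hölder functions. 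The only points requiring a little care are the harmless bookkeeping between ``number of side-$\delta$ squares'' and the covering numbers $N_\varepsilon$ from the definition, and keeping track of the two competing powers so that their product is correctly identified as $\delta^{-\alpha}$ throughout the range $1\le\alpha\le2$; in the borderline case $\alpha=1$ the height per column is bounded by the constant $c$ and one simply gets $\Theta(\delta^{-1})$ squares, consistent with a graph of box dimension one.
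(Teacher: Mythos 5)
Your covering argument is correct and is precisely the classical proof of this fact: the paper does not prove the lemma itself but simply cites \cite[Corollary 11.2]{MR3236784}, and your column-counting estimate (at most $c\delta^{1-\alpha}+2$ side-$\delta$ squares over each of the $\lceil L/\delta\rceil$ subintervals, hence $N_{\delta\sqrt{2}}(\Gamma)\leq C\delta^{-\alpha}$ using $\alpha\geq 1$) is exactly the argument given there. The only cosmetic remark is that the conclusion of the lemma as stated should read $\overline{\dim}_{\mathcal{B}}\left(\{(t,f(t))\,:\,t\in I\}\right)\leq\alpha$, i.e.\ it concerns the graph of the generic function $f$, which is how you correctly interpreted it.
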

One can then directly state the following proposition.

\begin{Prop}\label{prop:upperbounddim}
Given $d \in \N^*$, a compact set $K$ of $(\frac{1}{2},1)$, a Hurst function $H \, : \, \R_+ \to K$ satisfying Condition \ref{condi} (a) and a compact interval $I \subset \R_+$, there exists $\widetilde{\Omega}_1$, an event of probability $1$, such that, on $\widetilde{\Omega}_1$, we have
\[ \overline{\dim}_{\mathcal{B}}\left( \mathcal{G}_d(I)\right)  \leq 2-\underline{H}(I).\]
\end{Prop}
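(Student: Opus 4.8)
The plan is to reduce the statement to a direct application of Lemma \ref{lemma:falco}, using the uniform modulus of continuity already established in Theorem \ref{thm:modulusofcontinuity}. First I would fix an arbitrarily small $\epsilon > 0$ and aim to show that, on a suitable event of probability $1$, the sample path $t \mapsto X_d^{H(\cdot)}(t)$ satisfies a Hölder-type bound of exponent $\underline{H}(I) - \epsilon$ on $I$. Indeed, Theorem \ref{thm:modulusofcontinuity} gives, on the event $\Omega^*_1$, that
\[
\limsup_{r \to 0^+} \frac{\sup_{t_0 \in I} \Osc(X_d^{H(\cdot)}, [t_0 - r, t_0 + r] \cap I)}{r^{\underline{H}(I)} (\log r^{-1})^{\frac{d}{2}}} < + \infty,
\]
and since $(\log r^{-1})^{\frac{d}{2}} = o(r^{-\epsilon})$ as $r \to 0^+$ for every $\epsilon > 0$, this yields the existence of a finite positive random variable $C$ (depending on $I$ and $\epsilon$) such that, for all $s, t \in I$,
\[
|X_d^{H(\cdot)}(s) - X_d^{H(\cdot)}(t)| \leq C |t - s|^{\underline{H}(I) - \epsilon}.
\]
Here one has to be slightly careful: the statement of Theorem \ref{thm:modulusofcontinuity} controls the oscillation over small centred intervals, so passing to a global bound on all of $I$ requires a standard chaining over a finite cover of $I$ by such intervals, which only affects the constant.

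Next I would apply Lemma \ref{lemma:falco} with $\alpha = 2 - (\underline{H}(I) - \epsilon)$, which lies in $[1,2]$ provided $\epsilon$ is small enough (since $\underline{H}(I) \in (1/2,1)$), and with the random constant $c = C$; the lemma then gives $\overline{\dim}_{\mathcal{B}}(\mathcal{G}_d(I)) \leq 2 - \underline{H}(I) + \epsilon$ on the event where the above Hölder bound holds. Finally, to remove the $\epsilon$, I would intersect over a sequence $\epsilon_n \to 0^+$: letting $\widetilde{\Omega}_1$ be the intersection of $\Omega^*_1$ with the countably many events on which the Hölder bound of exponent $\underline{H}(I) - \epsilon_n$ holds, we obtain $\overline{\dim}_{\mathcal{B}}(\mathcal{G}_d(I)) \leq 2 - \underline{H}(I) + \epsilon_n$ for every $n$ on $\widetilde{\Omega}_1$, hence $\overline{\dim}_{\mathcal{B}}(\mathcal{G}_d(I)) \leq 2 - \underline{H}(I)$, as claimed.

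There is no serious obstacle here: the heavy lifting was done in Theorem \ref{thm:modulusofcontinuity}. The only points requiring minor care are the passage from the centred-interval oscillation bound to a genuine two-point Hölder inequality on all of $I$ (a finite chaining argument), the verification that the exponent $\alpha$ stays in the admissible range $[1,2]$ for Lemma \ref{lemma:falco}, and the countable intersection to pass to the limit in $\epsilon$. Alternatively, and even more directly, one could feed the modulus of continuity itself (rather than a Hölder exponent) into the covering estimate underlying Lemma \ref{lemma:falco}, but invoking the lemma with an arbitrarily close Hölder exponent is the cleanest route.
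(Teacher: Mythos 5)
Your proposal is correct and follows essentially the same route as the paper, which simply combines Theorem \ref{thm:modulusofcontinuity} with Lemma \ref{lemma:falco}; your extra steps (absorbing the $(\log r^{-1})^{d/2}$ factor into $r^{-\epsilon}$, checking $\alpha\in[1,2]$, and letting $\epsilon\to 0$) are exactly the routine details the paper leaves implicit. Note only that the countable intersection over $\epsilon_n$ is not even needed, since on the single event $\Omega^*_1$ the modulus of continuity already yields the Hölder bound of exponent $\underline{H}(I)-\epsilon$ for every $\epsilon>0$ simultaneously.
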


\begin{proof}
It is an immediate consequence of Theorem \ref{thm:modulusofcontinuity} and Lemma \ref{lemma:falco}.
\end{proof}

To obtain a lower bound on the Hausdorff dimension, we use the notion of potential.

\begin{Def}
Let $d \in \N^*$ and $s \geq 0$. The \textit{$s$-potential} at a point $x \in \R^d$ to the measure $\mu$ on $\R^d$ is the quantity
\[ \phi_s(x) = \int \frac{d\mu(y)}{|x-y|^s}.\]
The \textit{$s$-energy} of $\mu$ is then defined as
\[ I_s(\mu) = \int \phi_s(x) \, d\mu(x) = \iint \frac{d\mu(x) d\mu(y)}{|x-y|^s}.\]
\end{Def}

Often, potentials and energies are used to get a lower bound for the Hausdorff dimension, as stated in the following Lemma, see \cite[Theorem 4.13]{MR3236784} for a proof.

\begin{Lemma}\label{lemma:falco2}
Let $d \in \N^*$ and $A$ be a subset of $\R^d$. If there is a measure $\mu$ on $A$ with $I_s(\mu) < \infty$, then $\mathcal{H}^s(A)= \infty$ and $\ha(A) \geq s$.
\end{Lemma}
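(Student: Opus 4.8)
My plan is to prove Lemma~\ref{lemma:falco2} by the classical energy (potential-theoretic) method. Reducing by inner regularity of the finite measure $\mu$ to the case where $A$ is compact (replace $A$ by a compact subset of positive $\mu$-measure and $\mu$ by its restriction, which still has finite $s$-energy, using that $\mathcal{H}^s$ and $\ha$ are monotone), the strategy is: (i) deduce from $I_s(\mu)<\infty$ that the $\mu$-mass of small balls decays like $o(\rho^s)$ at $\mu$-almost every point; (ii) make this decay uniform on a set of positive $\mu$-mass; (iii) conclude via the mass distribution principle.

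First, since $I_s(\mu)=\int\phi_s\,d\mu<\infty$, the potential $\phi_s$ is finite $\mu$-a.e.; and for every $x$ with $\phi_s(x)<\infty$, dominated convergence (the tail of a convergent integral) gives $\int_{B(x,\rho)}|x-y|^{-s}\,d\mu(y)\to0$ as $\rho\to0^+$, so using $|x-y|^{-s}\ge\rho^{-s}$ on the closed ball $B(x,\rho)$,
\[
\mu(B(x,\rho))\ \le\ \rho^{s}\int_{B(x,\rho)}|x-y|^{-s}\,d\mu(y)\ =\ o(\rho^{s})\qquad(\rho\to0^{+}).
\]
Fixing $k\in\N$ with $\mu(A_k)>0$ for $A_k:=\{x\in A:\phi_s(x)\le k\}$ (possible since $\mu(\bigcup_kA_k)=\mu(\{\phi_s<\infty\})=\mu(\R^d)>0$), we already have on $A_k$ the cruder scale-uniform bound $\mu(B(x,\rho))\le k\rho^{s}$. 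This alone gives $\ha(A)\ge s$: for any cover $\{U_i\}$ of $A_k$ by sets of diameter at most $\delta$, pick $x_i\in U_i\cap A_k$, note $U_i\subseteq B(x_i,\diam(U_i))$, and sum to get $\mu(A_k)\le\sum_i\mu(U_i)\le k\sum_i(\diam(U_i))^{s}$, whence $\mathcal{H}^{s}(A)\ge\mathcal{H}^{s}(A_k)\ge\mu(A_k)/k>0$.

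To obtain the stronger conclusion $\mathcal{H}^{s}(A)=\infty$, one must replace the $x$-wise $o(\rho^s)$ by a uniform one. Apply Egorov's theorem on the finite measure space $(A_k,\mu)$ to the functions $x\mapsto\int_{B(x,1/n)}|x-y|^{-s}\,d\mu(y)$, which tend to $0$ $\mu$-a.e. by the display above: there are $A_k'\subseteq A_k$ with $\mu(A_k')\ge\tfrac12\mu(A_k)>0$ and a function $\varepsilon(\rho)\to0$ as $\rho\to0^{+}$ such that $\mu(B(x,\rho))\le\varepsilon(\rho)\,\rho^{s}$ for all $x\in A_k'$ and all $\rho>0$. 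The same covering computation on $A_k'$ now yields $\mathcal{H}_\delta^{s}(A_k')\ge\mu(A_k')\big/\sup_{0<\rho\le\delta}\varepsilon(\rho)$, and letting $\delta\to0^{+}$ the right-hand side diverges; since $A_k'\subseteq A$ this proves $\mathcal{H}^{s}(A)=\infty$, and a fortiori $\ha(A)\ge s$.

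The one genuinely delicate step is the passage from $\mathcal{H}^{s}(A)>0$ to $\mathcal{H}^{s}(A)=\infty$: finite energy only directly delivers the bound $\mu(B(x,\rho))\le k\rho^{s}$, which controls positivity of $\mathcal{H}^s$ but not its size, and the uniformisation — via Egorov, or equivalently via the density comparison estimate ``$\limsup_{\rho\to0^+}\mu(B(x,\rho))\rho^{-s}\le\lambda$ on $E$ implies $\mathcal{H}^{s}(E)\ge c_d\,\mu(E)/\lambda$'' followed by $\lambda\to0^{+}$, as in \cite[Chapter~4]{MR3236784} — is what makes the difference.
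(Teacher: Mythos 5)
Your argument is correct: it is the classical energy/mass-distribution proof, deriving $\mu(B(x,\rho))\leq \rho^{s}\phi_s(x)$, upgrading the pointwise $o(\rho^{s})$ decay to a uniform one on a set of positive $\mu$-measure (via Egorov, equivalently the density comparison lemma), and concluding by the covering estimate. The paper does not prove this lemma itself but quotes it from \cite[Theorem 4.13]{MR3236784}, whose proof is essentially the same potential-theoretic argument, so your proposal matches the intended route.
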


In order to apply Lemma \ref{lemma:falco2} in our context, we have to find upper bounds for negative moments of the multifractional Hermite process. On this purpose, we use the following result, known as Carbery-Wright inequality, see \cite[Theorem 8]{MR1839474}.

\begin{Lemma}\label{lemma:carbery}
There is an absolute deterministic constant $c>0$ such that, for any $n,d \geq 1$, $1 < p < \infty$ any polynomial $Q \, : \, \R^d \to \R$ of degree at most $n$, any Gaussian random vector $(X_1,\ldots,X_d)$ and any $x>0$,
\[ \mathbb{E}[|Q(X_1,\ldots,X_d)|^\frac{p}{n} ]^\frac{1}{p} \mathbb{P}(|Q(X_1,\ldots,X_d)| \leq x ) \leq c p x^\frac{1}{n}.\]
\end{Lemma}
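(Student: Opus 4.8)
The displayed inequality is nothing but the Carbery--Wright inequality, quoted verbatim from \cite[Theorem 8]{MR1839474}, so in the present paper there is strictly speaking nothing to prove; what follows only indicates how a self-contained argument would go. The plan would be an induction on the degree $n$ of the polynomial $Q$, carried out in the wider class of log-concave probability measures on $\Rd$ (the centred Gaussian law being log-concave), since that class is stable under the operations the induction needs: marginals and conditional laws of a log-concave measure are again log-concave, by Pr\'ekopa--Leindler. Before starting the induction I would normalise: because all $L^q(\Omega)$ norms of a fixed-degree polynomial in a Gaussian vector are comparable — this is the hypercontractivity already invoked for Corollary \ref{cor:lpnorm}, see \cite[Theorem 2.7.2]{MR2962301} — one may divide $Q$ by $\mathbb{E}[|Q(X_1,\dots,X_d)|^{p/n}]^{1/p}$ and thereby reduce to a polynomial whose relevant moment equals $1$; it is exactly this renormalisation that produces the moment factor on the left-hand side of the claim.

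The base case $n=1$ is a one-dimensional statement about an affine functional of a Gaussian vector, hence about the density of a single Gaussian, so after normalisation the bound $\mathbb{P}(|Q|\le x)\le c\,x$ is elementary (the factor $cp$ comes out of the $L^p$ comparison used in the normalisation). For the inductive step I would use that each partial derivative $\partial Q/\partial x_i$ is a polynomial of degree at most $n-1$: freezing all coordinates but one, one proves a one-dimensional small-ball estimate for $Q$ along the line, in terms of the small-ball behaviour of $Q$ and of $\partial Q/\partial x_i$ there, and then integrates this estimate over the remaining coordinates against the (still log-concave) marginal law, applying the inductive hypothesis to the lower-degree polynomials that appear. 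Optimising over which coordinate is singled out, and over the scale at which one switches from $Q$ to its derivative, is what yields the exponent $1/n$ and the linear dependence $cp$.

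The hard part will be the one-dimensional core of the induction: quantifying precisely how $\mathbb{P}(|Q|\le x)$ is governed by the small-ball behaviour of $\nabla Q$ while keeping every constant independent of the dimension $d$ and recovering exactly the power $x^{1/n}$. This is the substantive content of \cite{MR1839474}, and it genuinely uses log-concavity (convexity and monotonicity of the distribution function of a polynomial under a log-concave measure), so it cannot be replaced by a soft moment estimate. Since for the purposes of this paper Lemma \ref{lemma:carbery} is needed only to bound the probabilities \eqref{eqn:probaintro}, I would simply cite the result in the form stated above.
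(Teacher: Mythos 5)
Your proposal matches the paper exactly: Lemma \ref{lemma:carbery} is not proved there but quoted directly from \cite[Theorem 8]{MR1839474}, which is precisely what you conclude one should do. The accompanying sketch of the Carbery--Wright argument (induction on the degree within the log-concave class, with the Gaussian law as a special case) is a reasonable outline but is not needed for the paper, whose only use of the lemma is the small-ball estimate in Proposition \ref{lemma:boundproba}.
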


In the sequel, if a compact interval $I \subset \R_+$, $t_0 \in I$ and $\varepsilon>0$ are fixed, we set
\[\overline{H}(t_0,\varepsilon):=\overline{H}([t_0-\varepsilon,t_0+\varepsilon] \cap I) .\]

The proof of the following Lemma uses ideas from \cite[Lemma 14]{MR3192502} and \cite[Lemma 4.3]{MR3003367}, with modifications again mainly due to the fact that we are working with a non-constant Hurst function. As this Lemma is the main reason for the disparity between the lower and upper bounds for the fractal dimensions in Theorem \ref{thm:dim}, we believe that it is useful to write it in full details so that the readers can directly understand where it comes from.

\begin{Prop}\label{lemma:boundproba}
Given $d \in \N^*$, a compact set $K$ of $(\frac{1}{2},1)$, a Hurst function $H \, : \, \R_+ \to K$ satisfying Condition \ref{condi} (a) and a compact interval $I \subset \R_+$. If $t_0 \in I$ is such that $H(t_0)=\underline{H}(I)$, there exist two deterministic constants $c>0,\xi>0$, both only depending on $d$, $H$ and $I$, such that, for all $0 < \varepsilon < \xi$, $x \geq 0$ and $t,u \in I \cap [t_0- \varepsilon,t_0+\varepsilon]$,
\[ \mathbb{P}(|X_d^{H(\cdot)} (t)-X_d^{H(\cdot)} (u)| \leq x) \leq c x^\frac{1}{d}  |t-u|^{-\frac{\overline{H}(t_0,\varepsilon)}{d}}. \]
\end{Prop}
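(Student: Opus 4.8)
The plan is to derive the bound from the Carbery--Wright inequality (Lemma~\ref{lemma:carbery}) applied to the increment $Y:=X_d^{H(\cdot)}(t)-X_d^{H(\cdot)}(u)$, the required ``denominator'' $|t-u|^{\overline{H}(t_0,\varepsilon)}$ being supplied by the lower bound for $\|Y\|_{L^2(\Omega)}$ coming from Proposition~\ref{prop:holdofgene}. Fix $t,u\in I\cap[t_0-\varepsilon,t_0+\varepsilon]$ with $t\neq u$. By Definition~\ref{def:generator}, $Y=I_d(g)$ for a symmetric $g\in L^2(\R^d)$, so $Y$ lies in the $d$-th Wiener chaos. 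Approximating $g$ in $L^2(\R^d)$ by simple symmetric functions of the form \eqref{eqn:Wiener-Itôintegral}, each corresponding random variable \eqref{eqn:Wiener-Itôintegral2} is a polynomial of degree at most $d$ in a finite Gaussian vector of Brownian increments, so Lemma~\ref{lemma:carbery} applies to it with $n=d$ and $p=2d$; this gives $\|Q\|_{L^2(\Omega)}^{1/d}\,\mathbb{P}(|Q|\leq x)\leq 2cd\,x^{1/d}$ for each such polynomial $Q$. Since these approximations converge to $Y$ in $L^2(\Omega)$, hence in distribution, a routine passage to the limit (using $\mathbb{P}(|Y|\leq x)\leq\liminf_n\mathbb{P}(|Q_n|\leq x+\delta)$ for every $\delta>0$ and letting $\delta\to0^+$) yields
\[
\|Y\|_{L^2(\Omega)}^{1/d}\,\mathbb{P}(|Y|\leq x)\leq 2cd\,x^{1/d},\qquad x>0 ,
\]
so that $\mathbb{P}(|Y|\leq x)\leq c_d\,x^{1/d}\,\|Y\|_{L^2(\Omega)}^{-1/d}$ with $c_d$ depending only on $d$.

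It remains to bound $\|Y\|_{L^2(\Omega)}$ from below by a constant multiple of $|t-u|^{\overline{H}(t_0,\varepsilon)}$. Condition~\ref{condi}~(a) gives $\gamma\in(\underline H(I),1)$ and $C>0$ with $|H(t)-H(u)|\leq C|t-u|^\gamma$ on $I$. Since $H(t_0)=\underline H(I)<\gamma$ and $\overline H(t_0,\varepsilon)\downarrow H(t_0)$ as $\varepsilon\to0^+$, we may fix $\xi\in(0,\tfrac12)$, depending only on $H$ and $I$, so small that $\delta_0:=\gamma-\overline H(t_0,\xi)>0$ and $(2\xi)^{\delta_0}\leq c_1/(2c_2C)$, where $c_1,c_2$ are the constants of Proposition~\ref{prop:holdofgene} associated with $I$. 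For $0<\varepsilon<\xi$ and $t,u\in I\cap[t_0-\varepsilon,t_0+\varepsilon]$ we have $|t-u|\leq 2\varepsilon<1$ and $\min\{H(t),H(u)\}\leq\overline H(t_0,\varepsilon)$, so the lower bound in Proposition~\ref{prop:holdofgene} gives
\[
\|Y\|_{L^2(\Omega)}\geq c_1|t-u|^{\min\{H(t),H(u)\}}-c_2|H(t)-H(u)|\geq c_1|t-u|^{\overline H(t_0,\varepsilon)}-c_2C|t-u|^{\gamma}.
\]
Because $\gamma-\overline H(t_0,\varepsilon)\geq\delta_0$ and $|t-u|\leq 2\varepsilon<2\xi$, the subtracted term is at most $c_2C(2\xi)^{\delta_0}|t-u|^{\overline H(t_0,\varepsilon)}\leq\tfrac{c_1}{2}|t-u|^{\overline H(t_0,\varepsilon)}$, whence $\|Y\|_{L^2(\Omega)}\geq\tfrac{c_1}{2}|t-u|^{\overline H(t_0,\varepsilon)}$. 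Substituting this into the estimate of the previous paragraph gives the claim with $c:=c_d(c_1/2)^{-1/d}$ and $\xi$ as chosen (the case $t=u$ being vacuous, reading $|t-u|^{-\overline H(t_0,\varepsilon)/d}=+\infty$).

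The main obstacle — and, as the authors observe, the source of the factor $\tfrac1d$ that spoils the lower and upper bounds in Theorem~\ref{thm:dim} — is that Carbery--Wright is available only for polynomials of Gaussian vectors, so one must justify the limiting argument for the genuine chaos element $I_d(g)$, and this is precisely what turns the expected $x$ on the right-hand side into $x^{1/d}$. The only other point requiring care is keeping $c$ and $\xi$ independent of $\varepsilon$, $t$, $u$ and $x$: this is arranged by invoking the uniform Hölder estimate on $H$ from Condition~\ref{condi}~(a) and by shrinking $\xi$ so that $\overline H(t_0,\cdot)$ remains strictly below $\gamma$ on $(0,\xi)$.
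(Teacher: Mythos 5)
Your proposal is correct and follows essentially the same route as the paper: the Carbery--Wright inequality with $n=d$, $p=2d$ applied to degree-$d$ polynomial (in Gaussians) approximations of the chaos increment, a passage to the limit, and then the lower bound on the $L^2(\Omega)$-norm from Proposition \ref{prop:holdofgene} combined with Condition \ref{condi} (a) after shrinking $\xi$ so that $\overline{H}(t_0,\varepsilon)$ stays below the Hölder exponent $\gamma$ of $H$. The only cosmetic differences are that the paper approximates via the orthonormal-basis (Hermite polynomial) expansion and passes to the limit through an almost surely convergent subsequence and Fatou's lemma, whereas you use simple-function approximations and a convergence-in-probability/distribution argument with a $\delta$-shift; both are valid.
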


\begin{proof}
Let us fix $t,u\in I$. The symmetric function
\[ f_{t,u}^{H(\cdot)} \, : \, \R^d \to \R \, : \, \mathbf{w} \mapsto \int_0^t f_{H(t)}(s,\mathbf{w}) \, ds - \int_0^u f_{H(u)}(s,\mathbf{w}) \, ds   \]
belongs to $L^2(\R^d)$. By definition,
\[ I_d \left( f_{t,u}^{H(\cdot)} \right)= \left(X_d^{H(\cdot)} (t)-X_d^{H(\cdot)} (u) \right).\] 

Given $\{e_j\}_{j \in \N}$ an orthonormal basis of $L^2(\R)$, the sequence of functions
\[ \left( f_{t,u}^{H(\cdot),J} := \sum_{j_1, \ldots, j_d=1}^J \langle f_{t,u}^{H(\cdot)}, e_{j_1} \odot \cdots \odot e_{j_d} \rangle e_{j_1} \odot \cdots \odot e_{j_d} \right)_J, \]
where $\odot$ stands for the symmetric tensor product, converges to $f_{t,u}^{H(\cdot)}$ in $L^2(\R^d)$, see \cite[Appendix B.3.]{MR2962301}. 

For all $(j_1, \ldots ,j_d) \in \N^d$, we know that
\[ I_d \left( e_{j_1} \odot \cdots \odot e_{j_d}\right) = \prod_{\ell=1}^p H_{n_\ell}  \left(  \int_{\R} e_{ \widetilde{j_\ell}} (x) \, dB(x) \right),\]
where $n_\ell$ is the number of occurrence  of $\widetilde{j_\ell}$ in $(j_1, \ldots ,j_d) $ and $H_{n_\ell}$ is the Hermite polynomial of degree $n_\ell$, see \cite[Page 14]{MR2200233}. In particular, since, for all $f \in L^2(\R)$ with $\|f\|_{L^2(\R)}=1$, $I_1 (f) \sim \mathcal{N}(0,1)$, Lemma \ref{lemma:carbery} with $p=2d$ and $n=d$ entails, for all $J$,
\[ \mathbb{P}(|I_d \left( f_{t,u}^{H(\cdot),J} \right)| \leq x ) \leq c 2d x^\frac{1}{d}  \|I_d \left( f_{t,u}^J \right) \|_{L^2(\Omega)}^{-\frac{1}{d}}.  \]

Now, from the isometry property for Wiener-Itô integrals, we know that 
\[ I_d \left( f_{t,u}^{H(\cdot),J} \right) \underset{J \to \infty}{\longrightarrow} \left(X_d^{H(\cdot)} (t)-X_d^{H(\cdot)} (u)\right) \]
in $L^2(\Omega)$. In particular, there exists a subsequence $(f_{t,u}^{H(\cdot),J_k} )_{k}$ for which the convergence holds almost surely. Then, by Fatou's Lemma,
\begin{align*}
\mathbb{P}\left(|X_d^{H(\cdot)} (t)-X_d^{H(\cdot)} (u)| \leq x \right) &\leq \liminf_{k \to + \infty} \mathbb{P}\left(|I_d \left( f_{t,u}^{H(\cdot),J_k} \right)| \leq x \right) \\
& \leq c 2d x^\frac{1}{d} \liminf_{k \to + \infty}  \left\|I_d \left( f_{t,u}^{J_k} \right) \right\|_{L^2(\Omega)}^{-\frac{1}{d}} \\
& =  c 2d x^\frac{1}{d}  \left\| X_d^{H(\cdot)} (t)-X_d^{H(\cdot)} (u)  \right\|_{L^2(\Omega)}^{-\frac{1}{d}}.
\end{align*}

We use Proposition \ref{prop:holdofgene} to affirm that there exist two deterministic constants $c_1,c_2 >0$, only depending on $d$, $K$ and $I$, such that
\[ \left\| X_d^{H(\cdot)} (t)-X_d^{H(\cdot)} (u)  \right\|_{L^2(\Omega)} \geq c_1 |t-u|^{\min \{H(t),H(u) \}}-c_2|H(t)-H(u)|.\]
Since $H$ satisfies Condition \ref{condi} (a), there exists $\gamma> \underline{H}(I) = H(t_0)$ such that $H \in C^\gamma(I)$. Then, if $\xi>0$ is sufficiently small, for all $0 < \varepsilon < \xi$, we also have $\overline{H}(t_0,\varepsilon)<\gamma$. By reducing again $\xi>0$ if necessary, we have, for all $0<\varepsilon <\xi$ and $t,u \in I \cap [t_0- \varepsilon,t_0+\varepsilon]$
\[c_2|H(t)-H(u)| \leq \frac{c_1}{2} |t-u|^{\overline{H}(t_0,\varepsilon)}. \]
In total, we have obtained, for all such $\varepsilon$ and $t,u$,
\[ \mathbb{P}\left(|X_d^{H(\cdot)} (t)-X_d^{H(\cdot)} (u)| \leq x \right) \leq c 2d (2^{-1} c_1)^{-\frac{1}{d}} x^\frac{1}{d} |t-u|^{-\frac{\overline{H}(t_0,\varepsilon)}{d}}. \]
\end{proof}

We can now prove Theorem \ref{thm:dim}.

\begin{proof}[Proof of Theorem \ref{thm:dim}]
Let $t_0 \in I$ be such that $H(t_0)=\underline{H}(I)$. Let $\xi>0$ be given by Proposition \ref{lemma:boundproba} and $j \in \N^*$ with $\frac{1}{j}< \xi$. For all $t,r \geq 0$ such that $t,t+r \in [t_0-j^{-1},t_0+j^{-1}] \cap I$ and $s > 0$, we have, by Proposition \ref{lemma:boundproba},
\begin{align}\label{estimationespedim}
\mathbb{E} & \left[ \left(|X_d^{H(\cdot)}(t+r)-X_d^{H(\cdot)}(t)|^2+r^2 \right)^{- \frac{s}{2}} \right] \nonumber\\
& = \int_0^{r^{-s}}\mathbb{P}\left( \left(|X_d^{H(\cdot)}(t+r)-X_d^{H(\cdot)}(t)|^2+r^2 \right)^{- \frac{s}{2}} \geq x \right) \, dx \nonumber \\
&= s \int_0^{+ \infty} y (y^2+r^2)^{-\frac{s}{2}-1} \mathbb{P}\left( |X_d^{H(\cdot)}(t+r)-X_d^{H(\cdot)}(t)| \leq y  \right) \, dy \nonumber\\
& \leq c s \int_0^{+ \infty} y (y^2+r^2)^{-\frac{s}{2}-1} y^{\frac{1}{d}} r^{-\frac{\overline{H}(t_0,j^{-1})}{d}} \, dy \nonumber\\
& \leq c' r^{-\frac{\overline{H}(t_0,j^{-1})}{d}} \left(  r^{-s-2}\int_0^r y^{1+\frac{1}{d}} \, dy + \int_r^{+ \infty} y^{-s-1+\frac{1}{d}} \, dy \right) \nonumber\\
& \leq c'' r^{\frac{1}{d}-s-\frac{\overline{H}(t_0,j^{-1})}{d}}, 
\end{align}
where $c>0$ is given by Proposition \ref{lemma:boundproba} and $c',c''>0$ are deterministic constants only depending on $s$, $d$ and $c$.

Thus, if we consider the random measure $\mu_{X,j}$ defined for all Borel sets $A \subseteq \R^2$ by
\[ \mu_{X,j}(A) := \mathcal{L} \{ t \in [t_0-j^{-1},t_0+j^{-1}] \cap I \, : \, (t,X_d^{H(\cdot)}(t)) \in A \},\]
with $\mathcal{L}$ the Lebesgue measure in $\R$, we get
\begin{align*}
\mathbb{E} & \left( \iint \frac{d\mu_{X,j}(x) d\mu_{X,j}(y)}{|x-y|^s}\right) \\ &= \iint_{ ([t_0-j^{-1},t_0+j^{-1}] \cap I )^2} \mathbb{E} \left[ \left(|X_d^{H(\cdot)}(t)-X_d^{H(\cdot)}(u)|^2+|t-u|^2 \right)^{- \frac{s}{2}} \right] \, dt \, du \\
& \leq c''  \iint_{ ([t_0-j^{-1},t_0+j^{-1}] \cap I )^2} |t-u|^{\frac{1}{d}-s-\frac{\overline{H}(t_0,\varepsilon)}{d}} \, dt \, du.
\end{align*}
If $s < 1+\frac{1-\overline{H}(t_0,j^{-1})}{d}$, then this last integral is finite. Therefore, for all $q_j \in \Q$ with $0<q_j \leq 1+\frac{1-\overline{H}(t_0,j^{-1})}{d}$, there exists $\widetilde{\Omega}_{j,q_j}$, an event of probability $1$, such that on, $\widetilde{\Omega}_{j,q_j}$,
\[  \iint \frac{d\mu_{X,j}(x) d\mu_{X,j}(y)}{|x-y|^{q_j}} < + \infty.\]
By Lemma \ref{lemma:falco2} and \eqref{eqn:incredimension}, it means that, on $\widetilde{\Omega}_{j,q_j}$
\[ q_j \leq \ha\left( \mathcal{G}_d([t_0-j^{-1},t_0+j^{-1}] \cap I) \right) \leq \ha\left( \mathcal{G}_d(I) \right).\]
As $H$ is a continuous function, it follows that on the event $\bigcap_{j} \bigcap_{q_j} \widetilde{\Omega}_{j,q_j}$ of probability $1$, we have
\[ 1+\frac{1-\underline{H}(I)}{d} \leq \ha\left( \mathcal{G}_d(I) \right). \]
It suffices to intersect this event with $\Omega_1$ from Proposition \ref{prop:upperbounddim} to get the conclusion.
\end{proof}

\begin{Rmk}\label{rmk:redcuhyp}
Let us note that the proof or Proposition \ref{lemma:boundproba} only requires that, if $t_0 \in I$ is such that $H(t_0)=\underline{H}(I)$, there exist $\xi>0$ and $\gamma>0$ such that $\gamma > \overline{H}(t_0,\xi)$ and $H \in C^{\gamma}([t_0-\xi,t_0+\xi) \cap I)$. In particular the lower bound for the Hausdorff dimension of $\mathcal{G}_d(I)$ still holds in this case.
\end{Rmk}

\section{Complements for the multifractionnal Rosenblatt process} \label{sec:rosen}

In this last section, we take advantage of the expression \eqref{expansionchaos2} to improve Theorem \ref{thm:dim} in the case $d=2$, where the multifractional Hermite process is the multifractional Rosenblatt process. Let us start by introducing the notions of Malliavin calculus that we are going to use. Details can be read in the fundamental books \cite{MR2962301,MR2200233}.

Generally speaking, let $\mathcal{H}$ be a real separable Hilbert space with inner product $\langle \cdot, \cdot \rangle_{\mathcal{H}} $ and associated norm $\| \cdot \|_{\mathcal{H}}$. We call isonormal Gaussian process over $\mathcal{H}$ any centred Gaussian family $X= \{ X(f) \, : \, f \in \mathcal{H} \}$ defined on a probability space $(\Omega,\mathcal{F},\mathbb{P})$ and such that, for every $f,g \in \mathcal{H}$, $\mathbb{E}[X(f)X(g)] = \langle f, g \rangle_{\mathcal{H}}$. One can assume that $\mathcal{F}$ is the $\sigma$-field generated by $X$. For all $m \geq 1$, $\mathcal{H}^{\odot_m}$ is the $m$th symmetric  tensor product of $\mathcal{H}$ and $L^2(\Omega,\mathcal{H}^{\odot_m})$ is the class of $\mathcal{H}^{\odot_m}$-valuated random elements $F$ which are $\mathcal{F}$-measurable and such that $\mathbb{E} [\| F\|_{\mathcal{H}^{\odot_m}}^2 ] < \infty$. Let $\mathcal{S}$ be the set of all cylindrical random variables of the form
\begin{equation}\label{intro:defmallia}
F=g(X(f_1),\ldots,X(f_n))
\end{equation}
with $n \geq 1$, $f_j \in \mathcal{H}$ and $g$ infinitely differentiable such that all its partial derivatives have polynomial growth. If $F \in \mathcal{S}$ is of the form \eqref{intro:defmallia}, the $m$th Malliavin derivative of $F$ is the element of $L^2(\Omega,\mathcal{H}^{\odot_m})$ defined by
\[ D^m F = \sum_{j_1,\ldots,j_m=1}^n \frac{\partial^m g}{\partial x_{j_1} \ldots \partial x_{j_m}}(X(f_1), \ldots, X(f_n)) f_{j_1} \otimes \cdots \otimes f_{j_m}.\]
For all $m \geq 1$ and $p \geq 1$, $\mathbb{D}^{m,p}$ denote the closure of $\mathcal{S}$ with respect to the norm
\begin{equation}\label{eqn:defofthenorms}
\| \cdot \|_{m,p} \, : \, F \mapsto \left(\mathbb{E}[|F|^p] + \sum_{j=1}^m \mathbb{E}[ \|D^j F \|^p_{\mathcal{H}^{\otimes_j}}] \right)^\frac{1}{p}.
\end{equation}
For all $p \geq 1$,  $\mathbb{D}^{\infty,p}= \bigcap_{m \geq 1} \mathbb{D}^{m,p}$

In the sequel, we will heavily use the following fact which is contained in \cite[Theorem 3.1]{MR3132731}.

\begin{Lemma}\label{lemma:nualart1}
If $F \in \mathbb{D}^{2,s}$ is such that $\mathbb{E}[|F|^{2p}] < \infty$ and $\mathbb{E}[ \| DF\|_{\mathcal{H}}^{-2r}] < \infty$ for $p,r,s>1$ satisfying $\frac{1}{p}+\frac{1}{r}+ \frac{1}{s}=1$, then $F$ has continuous and bounded density $f_F$ with
\[ \sup_{x \in \R} |f_F(x)| \leq c_p \left\|\| DF\|_{\mathcal{H}}^{-2} \right\|_{L^r(\Omega)} \|F\|_{2,s},\]
where $c_p >0$ is a deterministic constant only depending on $p$.
\end{Lemma}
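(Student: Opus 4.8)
The plan is to follow the classical Malliavin integration-by-parts scheme; in fact the statement coincides with \cite[Theorem 3.1]{MR3132731}, so I would really only recall that argument. Write $\delta$ for the divergence operator (the adjoint of $D$) and $L=-\delta D$ for the Ornstein--Uhlenbeck generator. First I would introduce the weight $u:=\|DF\|_{\mathcal{H}}^{-2}\,DF$ and check that, under the hypotheses, $u\in\mathrm{Dom}(\delta)$ with $\delta(u)\in L^{1}(\Omega)$. Because $F\in\mathbb{D}^{2,s}$, the chain rule gives $D(\|DF\|_{\mathcal{H}}^{-2})=-2\,\|DF\|_{\mathcal{H}}^{-4}\,\langle D^{2}F,DF\rangle_{\mathcal{H}}$, and then one shows $u\in\mathbb{D}^{1,q}(\mathcal{H})$ for a suitable $q>1$ by playing off the Malliavin--Sobolev norms of $F$ against the negative moments $\mathbb{E}[\|DF\|_{\mathcal{H}}^{-2r}]<\infty$; here the relation $\tfrac1p+\tfrac1r+\tfrac1s=1$ together with $\mathbb{E}[|F|^{2p}]<\infty$ is exactly what makes the Hölder estimates close and secures the Skorokhod-integrability and $L^{1}$-integrability of $\delta(u)$.

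Granting this, the identity $\mathbb{E}[\varphi'(F)]=\mathbb{E}[\varphi(F)\,\delta(u)]$ holds for all bounded smooth $\varphi$ with bounded derivative; approximating $\mathbbm{1}_{(x,+\infty)}$ by such functions and passing to the limit (legitimate since $\delta(u)\in L^{1}$) would give that the law of $F$ is absolutely continuous with $f_{F}(x)=\mathbb{E}[\mathbbm{1}_{\{F>x\}}\,\delta(u)]$. Continuity of $f_{F}$ then follows by dominated convergence, because $x\mapsto\mathbbm{1}_{\{F>x\}}$ is continuous from $\R$ into $L^{1}(\Omega)$ (the law of $F$ has no atom) and $\delta(u)\in L^{1}(\Omega)$. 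For the quantitative estimate I would start from $\sup_{x}|f_{F}(x)|\le\mathbb{E}[|\delta(u)|]$ and expand the divergence with the product rule,
\[
 \delta(u)=-\frac{LF}{\|DF\|_{\mathcal{H}}^{2}}+\frac{2\,\langle D^{2}F,DF\otimes DF\rangle_{\mathcal{H}^{\otimes 2}}}{\|DF\|_{\mathcal{H}}^{4}},
\]
using $\delta(DF)=-LF$; since $|\langle D^{2}F,DF\otimes DF\rangle_{\mathcal{H}^{\otimes 2}}|\le\|D^{2}F\|_{\mathcal{H}^{\otimes 2}}\|DF\|_{\mathcal{H}}^{2}$, this gives $|\delta(u)|\le\|DF\|_{\mathcal{H}}^{-2}\bigl(|LF|+2\|D^{2}F\|_{\mathcal{H}^{\otimes 2}}\bigr)$. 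Taking $L^{1}(\Omega)$-norms, applying Hölder's inequality so that $\|DF\|_{\mathcal{H}}^{-2}$ is measured in $L^{r}(\Omega)$ (the balance condition leaving the conjugate exponent $\le s$), and using Meyer's inequalities to bound $\|LF\|_{L^{s}(\Omega)}$ by a constant times $\|F\|_{2,s}$, one reaches $\sup_{x\in\R}|f_{F}(x)|\le c_{p}\,\bigl\|\,\|DF\|_{\mathcal{H}}^{-2}\,\bigr\|_{L^{r}(\Omega)}\,\|F\|_{2,s}$.

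The hard part is the bookkeeping of the intermediate exponents in the last step: one must route the negative moments of $\|DF\|_{\mathcal{H}}$ so that they appear in the final bound only through $\bigl\|\,\|DF\|_{\mathcal{H}}^{-2}\,\bigr\|_{L^{r}(\Omega)}$ to the first power, while $F$ contributes only through $\|F\|_{2,s}$ and $\mathbb{E}[|F|^{2p}]$, and it is exactly at this point that $\tfrac1p+\tfrac1r+\tfrac1s=1$ is needed. Since \cite[Theorem 3.1]{MR3132731} carries out all of this in full, in the end I would simply invoke that reference rather than reproving it.
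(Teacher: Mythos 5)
The paper offers no proof of this lemma at all: it is quoted directly from \cite[Theorem 3.1]{MR3132731}, which is exactly the reference you invoke, and your sketch of the underlying argument (the weight $u=\|DF\|_{\mathcal{H}}^{-2}DF$, the density formula $f_F(x)=\mathbb{E}[\mathbbm{1}_{\{F>x\}}\delta(u)]$, the expansion of $\delta(u)$ via $\delta(DF)=-LF$, and the Hölder/Meyer bookkeeping using $\tfrac1p+\tfrac1r+\tfrac1s=1$) is the standard and correct one. So your proposal is correct and takes essentially the same approach as the paper.
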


In our context, we work with $\mathcal{H}=L^2(\R)$ and, for all $f \in L^2(\R)$, $X(f)=I_1(f)$ is the Wiener-Itô integral of $f$ with respect to the Brownian motion. For all $p,d \geq 1$, $I_d(f) \in D^{\infty,p}$ and, for all $q \geq 1$,
\[
D^q I_d(f) = \begin{cases}
\frac{d!}{(d-q)!} I_{d-q}(f)& \text{if $q \leq d$} \\
0 & \text{otherwise,}
\end{cases}
\]
where, in $I_{d-q}(f)$, the stochastic integral is taken with respect to $d-q$ variables, resulting in a random variable belonging to $L^2(\Omega,L^2(\R^q))$. In particular, $D^d I_d(f) = d! f$. In the case $d=2$, we can use the expansion \eqref{expansionchaos2} and write
\[ D I_2(f) = 2 \sum_{j \in \N} \lambda_{f,j} I_1(e_{f,j}) e_{f,j}\]
which entails, given the orthogonality of the system $\{e_{f,j} \}$,
\begin{equation}\label{eqn:derivordre}
\| D I_2(f)\|_{L^2(\R)} = 2 \left(\sum_{j \in \N} \lambda_{f,j}^2 I_1(e_{f,j})^2 \right)^\frac{1}{2}.
\end{equation}
In particular, as $\{I_1(e_{f,j}\}_j$ are i.i.d. $\mathcal{N}(0,1)$ random variables, we deduce from \eqref{eq:norenvp} that
\begin{equation}\label{eq:derivordre12}
\mathbb{E}\left[\| D I_2(f)\|_{L^2(\R)}^2 \right] = 4 \left( \sum_{j \in \N} \lambda_{f,j}^2 \right)= 4 \|f\|^2.
\end{equation}

Now, let us state \cite[Lemma 7.1]{MR3132731} which gives an estimate for the negative moments of random variables of the form \eqref{eqn:derivordre}, which is particularly useful to apply Lemma \ref{lemma:nualart1}.

\begin{Lemma}\label{lemma:nualart2}
Let $G := \left( \sum_{j \in \N} \lambda_j X_j^2 \right)^\frac{1}{2}$ where $\{\lambda_j \}_{j \in \N}$ satisfies $|\lambda_j| \geq |\lambda_{j+1}|$ for all $j \geq 1$ and $\{X_j \}_{j \in \N}$ are i.i.d. standard normal. For all $r >1$, $\mathbb{E}[G^{-2r}] < \infty$ if and only if there exists $N > 2r$ such that $|\lambda_N| >0$ and, in this case,
\begin{equation}
\mathbb{E}[G^{-2r}] \leq c_p N^{-r} |\lambda|^{-2r},
\end{equation}
with $c_r >0$ a deterministic constant only depending on $r$.
\end{Lemma}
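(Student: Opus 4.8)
The plan is to reduce everything to the explicit law of a finite sum of i.i.d.\ squared standard normals --- a chi-squared distribution --- whose negative moments are a Beta-type integral. One may assume $\lambda_j\ge 0$ for all $j$: this is the only regime in which $G$ is a.s.\ real, and it is the one arising from \eqref{eqn:derivordre}, where the weights fed to the lemma are the $\lambda_{f,j}^2$.

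For the ``if'' part and the quantitative bound, fix an integer $N>2r$ with $\lambda_N\neq 0$. As $(\lambda_j)_j$ is nonincreasing, $\lambda_j\ge\lambda_N>0$ for $1\le j\le N$, so
\[
G^2=\sum_{j\in\N}\lambda_j X_j^2\ \ge\ \lambda_N\sum_{j=1}^N X_j^2,
\qquad\text{whence}\qquad
G^{-2r}\le \lambda_N^{-r}\Big(\sum_{j=1}^N X_j^2\Big)^{-r}.
\]
Since $\sum_{j=1}^N X_j^2$ has the $\chi^2_N$ density $t\mapsto\big(2^{N/2}\Gamma(N/2)\big)^{-1}t^{N/2-1}e^{-t/2}\mathbbm{1}_{(0,\infty)}(t)$,
\[
\E\Big[\Big(\sum_{j=1}^N X_j^2\Big)^{-r}\Big]=\frac{1}{2^{N/2}\Gamma(N/2)}\int_0^{\infty}t^{N/2-1-r}e^{-t/2}\,dt=\frac{\Gamma(N/2-r)}{2^{r}\,\Gamma(N/2)},
\]
which is finite precisely because $N/2-r>0$; this already gives $\E[G^{-2r}]<\infty$. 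It then remains to bound the Gamma ratio by $c_r N^{-r}$ with $c_r$ depending only on $r$, which yields $\E[G^{-2r}]\le c_r N^{-r}|\lambda_N|^{-r}$, i.e.\ the estimate of the lemma (in the shape obtained from this reduction).

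For the ``only if'' direction, suppose no integer $N>2r$ has $\lambda_N\neq 0$; then, by monotonicity of $(|\lambda_j|)_j$, at most $\lambda_1,\dots,\lambda_m$ are nonzero with $m\le 2r$ (the case $m=0$ being trivial, as then $G\equiv 0$). Bounding from above, $G^2=\sum_{j=1}^m\lambda_j X_j^2\le\lambda_1\sum_{j=1}^m X_j^2$, so $G^{-2r}\ge\lambda_1^{-r}\big(\sum_{j=1}^m X_j^2\big)^{-r}$, and the same density computation produces $\E\big[(\sum_{j=1}^m X_j^2)^{-r}\big]=\big(2^{m/2}\Gamma(m/2)\big)^{-1}\int_0^{\infty}t^{m/2-1-r}e^{-t/2}\,dt$, an integral diverging at $t=0$ because $m/2-1-r\le -1$. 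Hence $\E[G^{-2r}]=+\infty$. The one genuinely quantitative point of the whole argument is the uniform Gamma-ratio bound $\Gamma(N/2-r)/\Gamma(N/2)\le c_r N^{-r}$: here one exploits that $N$ ranges over integers strictly above $2r$, so that $N/2-r$ stays bounded below for fixed $r$ and $\Gamma(N/2-r)$ never approaches its pole, after which the classical asymptotics $\Gamma(x-r)\sim x^{-r}\Gamma(x)$ (plus a check on the finitely many small admissible values of $N$) conclude. Everything else is the soft reduction to a chi-squared law.
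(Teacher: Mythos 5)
Your proof is correct, and it is worth noting that the paper itself gives no proof of this lemma: it simply cites Lemma 7.1 of \cite{MR3132731}. Your argument --- reduce to $\lambda_j\ge 0$, bound $G^2\ge \lambda_N\sum_{j=1}^N X_j^2$, compute $\E[(\chi^2_N)^{-r}]=\Gamma(N/2-r)/(2^r\Gamma(N/2))$, bound the Gamma ratio by $c_rN^{-r}$ uniformly over integers $N>2r$ (you only sketch this last step, but it is standard and the constant is allowed to depend on $r$), and observe the divergence at $t=0$ when all nonzero weights have index at most $2r$ --- is essentially the standard proof of the cited result, so you have supplied in full what the paper outsources. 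One point you tuck into the parenthesis ``in the shape obtained from this reduction'' should be made explicit: your bound is $c_rN^{-r}|\lambda_N|^{-r}$, not the $c_rN^{-r}|\lambda_N|^{-2r}$ displayed in the lemma (where ``$|\lambda|$'' and ``$c_p$'' are evidently typos for $|\lambda_N|$ and $c_r$). For the statement as literally written, with weights $\lambda_j$, your exponent is the correct one: replacing $\lambda_j$ by $a\lambda_j$ multiplies $\E[G^{-2r}]$ by $a^{-r}$, so an exponent $-2r$ is dimensionally impossible. The $-2r$ in the paper corresponds to the source's formulation with $G=\bigl(\sum_j\lambda_j^2X_j^2\bigr)^{1/2}$, and that squared-weight version is also the one actually used in the proof of Proposition \ref{lemma:boundprobaorder2}, where $G=\frac12\|DF\|_{L^2(\R)}$ has weights $(\xi_j^{t,u})^2$ and the resulting estimate scales like $|\xi_3^{t,u}|^{-2}$. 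So the discrepancy is a transcription typo in the paper's statement, not a gap in your argument; apart from spelling out the Gamma-ratio inequality, nothing is missing.
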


Let use Lemma \ref{lemma:nualart1} to improve Proposition \ref{lemma:boundproba} in the second order Wiener chaos.

\begin{Prop}\label{lemma:boundprobaorder2}
Given $d \in \N^*$, a compact set $K$ of $(\frac{1}{2},1)$ and a Hurst function $H \, : \, \R_+ \to K$ satisfying Condition \ref{condi} (a) and a compact interval $I \subset \R_+$. If $t_0 \in I$ is such that $H(t_0)=\underline{H}(I)$, there exist two deterministic constants $c>0,\xi>0$, both only depending on $d$, $H$ and $I$, such that, for all $0 < \varepsilon < \xi$, $x \geq 0$ and $t,u \in I \cap [t_0- \varepsilon,t_0+\varepsilon]$,
\[ \mathbb{P}(|X_d^{H(\cdot)} (t)-X_d^{H(\cdot)} (u)| \leq x) \leq c x  |t-u|^{-\overline{H}(t_0,\varepsilon)}. \]
\end{Prop}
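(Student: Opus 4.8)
The plan is to show that, for $t\neq u$ in $I\cap[t_0-\varepsilon,t_0+\varepsilon]$ and $\varepsilon$ below a threshold $\xi\le 1/2$, the random variable $F:=X_2^{H(\cdot)}(t)-X_2^{H(\cdot)}(u)=I_2\big(f_{t,u}^{H(\cdot)}\big)$ has a continuous bounded density $f_F$ with $\sup_{y\in\R}|f_F(y)|\le c\,|t-u|^{-\overline{H}(t_0,\varepsilon)}$; the statement then follows from $\mathbb{P}(|F|\le x)\le 2x\,\sup_y|f_F(y)|$ (the cases $x=0$ or $t=u$ being trivial). To produce and estimate the density I would apply Lemma \ref{lemma:nualart1} with $\mathcal{H}=L^2(\R)$ and with exponents $p,s$ taken large, so that the associated $r$ (with $\frac1p+\frac1r+\frac1s=1$) is as close to $1$ as needed. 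Since $I_2\big(f_{t,u}^{H(\cdot)}\big)$ lies in every $\mathbb{D}^{2,s}$ and has finite moments of all orders by hypercontractivity, the only substantial inputs are upper bounds for $\|F\|_{2,s}$ and for $\big\|\,\|DF\|_{L^2(\R)}^{-2}\big\|_{L^r(\Omega)}$.

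Write $h_*:=\min\{H(t),H(u)\}$ and $\sigma:=\|f_{t,u}^{H(\cdot)}\|_{L^2(\R^2)}=\tfrac1{\sqrt2}\|F\|_{L^2(\Omega)}$. First I would check that $\|F\|_{2,s}\le C_s\,\sigma$: indeed $D^2I_2(f)=2f$ is deterministic, $DI_2(f)=2\sum_j\lambda_{f,j}I_1(e_{f,j})e_{f,j}$ so that $\|DF\|_{L^2(\R)}^2=4\sum_j\lambda_{f,j}^2 I_1(e_{f,j})^2$ by \eqref{eqn:derivordre}, and each of the three terms in \eqref{eqn:defofthenorms} is controlled by $\sigma$ through the isometry \eqref{eqn:isometry}, the identity \eqref{eq:norenvp}, and hypercontractivity on the (fixed) chaoses involved. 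Then Proposition \ref{prop:holdofgene} gives $\sigma\lesssim |t-u|^{h_*}+|H(t)-H(u)|$.

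Next, $\|DF\|_{L^2(\R)}^2=4\sum_j\lambda_{f,j}^2 I_1(e_{f,j})^2$, with $\{I_1(e_{f,j})\}_j$ i.i.d.\ standard normal and the $\lambda_{f,j}$ ordered so that $|\lambda_{f,j}|$ is non-increasing. Provided $\mathcal{A}_{f_{t,u}^{H(\cdot)}}$ has at least $N$ nonzero eigenvalues for a fixed $N>2r$, Lemma \ref{lemma:nualart2} applied to this expression yields $\big\|\,\|DF\|_{L^2(\R)}^{-2}\big\|_{L^r(\Omega)}\le C_N\,|\lambda_{f_{t,u}^{H(\cdot)},N}|^{-2}$. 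The core of the proof is thus the lower bound $|\lambda_{f_{t,u}^{H(\cdot)},N}|\gtrsim|t-u|^{h_*}$, uniform in $t,u$. To get it, decompose $f_{t,u}^{H(\cdot)}=\int_u^t f_{h_*}(s,\bullet)\,ds+E_{t,u}$, where $E_{t,u}$ is $\int_0^u(f_{H(t)}-f_{H(u)})(s,\bullet)\,ds$ or $\int_0^t(f_{H(t)}-f_{H(u)})(s,\bullet)\,ds$ (whichever choice puts the smaller Hurst parameter $h_*$ on the main term); by the mean value estimate \eqref{eq:meanvalfh}, $\|E_{t,u}\|_{L^2(\R^2)}\lesssim|H(t)-H(u)|\lesssim|t-u|^\gamma$, where $\gamma$ is the H\"older exponent of $H$ from Condition \ref{condi} (a), which (after shrinking $\xi$, using continuity of $H$ and $H(t_0)=\underline H(I)<\gamma$) satisfies $\gamma>\overline{H}(t_0,\varepsilon)\ge h_*$. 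A dilation--translation change of variables ($s=u+(t-u)\varsigma$, $w_\ell=u+(t-u)v_\ell$) shows that the nonzero eigenvalues of $\mathcal{A}_{\int_u^t f_{h}(s,\bullet)ds}$ are exactly $\{(t-u)^{h}\mu_j^{(h)}\}_j$, where the $\mu_j^{(h)}$ are the eigenvalues of $\mathcal{A}_{g_h}$ with $g_h:=\int_0^1 f_h(s,\bullet)\,ds$; and $\mathcal{A}_{g_h}=\mathcal{B}_h\mathcal{B}_h^*$, where $\mathcal{B}_h\colon L^2(0,1)\to L^2((-\infty,1))$ is $\mathcal{B}_h\psi\colon w\mapsto\int_0^1(s-w)_+^{\frac h2-1}\psi(s)\,ds$, a reflected Riemann--Liouville fractional integral of order $\tfrac h2\in(\tfrac34,1)$, hence injective and compact. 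Consequently $\mathcal{A}_{g_h}$ has infinite rank, $h\mapsto\mathcal{A}_{g_h}$ is continuous in operator norm, and, $H([t_0-\xi,t_0+\xi]\cap I)$ being compact, $\delta_N:=\inf_h|\mu_N^{(h)}|>0$. Weyl's inequality for singular values then gives $|\lambda_{f_{t,u}^{H(\cdot)},N}|\ge\delta_N(t-u)^{h_*}-c|t-u|^\gamma\ge\tfrac{\delta_N}{2}(t-u)^{h_*}$ once $|t-u|<\xi$, because $\gamma-h_*\ge\gamma-\overline{H}(t_0,\varepsilon)>0$ uniformly; the same inequality shows that $\mathcal{A}_{f_{t,u}^{H(\cdot)}}$ does have at least $N$ nonzero eigenvalues for such $|t-u|$.

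Combining the three estimates,
\[
\sup_y|f_F(y)|\ \lesssim\ |\lambda_{f_{t,u}^{H(\cdot)},N}|^{-2}\,\|F\|_{2,s}\ \lesssim\ (t-u)^{-2h_*}\big((t-u)^{h_*}+|t-u|^\gamma\big)\ =\ (t-u)^{-h_*}+(t-u)^{\gamma-2h_*},
\]
and since $h_*\le\overline{H}(t_0,\varepsilon)<\gamma$ and $|t-u|\le2\varepsilon<1$ one has $-h_*\ge-\overline{H}(t_0,\varepsilon)$ and $\gamma-2h_*\ge-\overline{H}(t_0,\varepsilon)$ (the latter since $\gamma+\overline{H}(t_0,\varepsilon)-2h_*\ge\gamma-h_*>0$), whence $\sup_y|f_F(y)|\lesssim|t-u|^{-\overline{H}(t_0,\varepsilon)}$ and the claim follows. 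I expect the main obstacle to be the middle of the previous paragraph: the uniform non-degeneracy $\delta_N>0$ of the $N$-th eigenvalue of $\mathcal{A}_{g_h}$, that is, the injectivity (hence infinite rank) of the fractional-integration operator $\mathcal{B}_h$ together with the continuity of $h\mapsto|\mu^{(h)}_N|$; everything else is routine bookkeeping with hypercontractivity, the isometry, \eqref{eq:norenvp}, and Proposition \ref{prop:holdofgene}.
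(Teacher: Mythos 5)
Your proposal is correct, and its overall architecture coincides with the paper's: a density bound from Lemma \ref{lemma:nualart1}, negative moments of $\|DF\|_{L^2(\R)}$ from Lemma \ref{lemma:nualart2}, a dilation--translation argument reducing the constant-parameter piece of the kernel to the unit interval, and a perturbation step in which $\|f_{t,u}^{H(\cdot)}-f_{t,u}^{h}\|_{L^2(\R^2)}\lesssim |H(t)-H(u)|\lesssim |t-u|^{\gamma}$ with $\gamma>\overline{H}(t_0,\varepsilon)$ (Condition \ref{condi} (a) plus Proposition \ref{prop:holdofgene}) is absorbed into the leading eigenvalue term. The one substantive place where you diverge is the non-degeneracy input: the paper quotes the proof of \cite[Theorem 3.1]{MR2768856} to get $\lambda_3\neq 0$ for the fixed-parameter operator $\mathcal{A}_{f_{1,0}^{H(t)}}$, whereas you prove infinite rank of $\mathcal{A}_{g_h}$ directly via the factorization $\mathcal{A}_{g_h}=\mathcal{B}_h\mathcal{B}_h^{*}$ with $\mathcal{B}_h$ an injective fractional-integration operator, and then obtain the uniform bound $\delta_N=\inf_h |\mu_N^{(h)}|>0$ by continuity of $h\mapsto \mathcal{A}_{g_h}$ and compactness of the range of $H$. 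This buys independence from the external reference and, more importantly, makes explicit the uniformity in $t$ of the eigenvalue lower bound, which the paper leaves implicit ($\lambda_3$ there depends on $H(t)$); your Weyl inequality for singular values is also the precise form of the perturbation step that the paper attributes somewhat tersely to \eqref{eq:norenvp}, and your final bookkeeping, keeping $h_*=\min\{H(t),H(u)\}$ and $\gamma$ separate before comparing with $\overline{H}(t_0,\varepsilon)$, is slightly cleaner than the paper's direct claim $\|F\|_{2,2}\lesssim |t-u|^{\overline{H}(t_0,\varepsilon)}$. Two small corrections: for $d=2$ the kernel exponent is $\frac{h}{2}-1$ with $\frac{h}{2}\in(\frac14,\frac12)$, not $(\frac34,1)$ (harmless, since injectivity of Riemann--Liouville integration holds for any positive order, e.g.\ by Titchmarsh's convolution theorem), and compactness of $\mathcal{B}_h$ is not actually needed, injectivity alone gives infinite rank of $\mathcal{B}_h\mathcal{B}_h^{*}$.
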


\begin{proof}
Let us fix $t,u\in I$, we assume without loss of generality $u<t$. We keep the notation 
\[ f_{t,u}^{H(\cdot)} \, : \, \R^2 \to \R \, : \, \mathbf{w} \mapsto \int_0^t f_{H(t)}(s,\mathbf{w}) \, ds - \int_0^u f_{H(u)}(s,\mathbf{w}) \, ds   \]
introduced in the proof of Proposition \ref{lemma:boundproba} and also consider the function
\[ f_{t,u}^{H(t)} \, : \, \R^2 \to \R \, : \, \mathbf{w} \mapsto \int_u^t f_{H(t)}(s,\mathbf{w}) \, ds .  \]
Note that $I_2 \left( f_{t,u}^{H(t)}\right)= X_d(t,H(t))-X_d(u,H(t))$. If $\{\lambda_j\}_{j \in \N}$ are the eigenvalues of the Hilbert-Schmidt operator $\mathcal{A}_{f_{1,0}^{H(t)}}$ ordered with $|\lambda_j| \geq |\lambda_{j+1}|$, on one hand, one can check, with some obvious changes of variables, that  $\{|t-u|^{H(t)} \lambda_j\}_{j \in \N}$ are the eigenvalues of $\mathcal{A}_{f_{t,u}^{H(t)}}$. On the other hand, we know from the proof of \cite[Theorem 3.1]{MR2768856} that $\lambda_3 \neq 0$. Finally, inequality \eqref{eq:norenvp} allows to affirm that, if $\{\xi_j^{t,u}\}_{j \in \N}$ are the eigenvalues of the Hilbert-Schmidt operator $\mathcal{A}_{f_{t,u}^{H(u)}}$ ordered with $|\xi_j^{t,u}| \geq |\xi_{j+1}^{t,u}|$
\[ |\xi_3^{t,u}|> |t-u|^{H(t)} |\lambda_3|- \|  f_{t,u}^{H(\cdot)}-f_{t,u}^{H(t)}\|_{L^2(\R^2)}. \]
From Proposition \ref{prop:holdofgene}, we know that there exists $c_2$, only depending on $K$ and $H$, such that
\[ \|  f_{t,u}^{H(\cdot)}-f_{t,u}^{H(t)}\|_{L^2(\R^2)} \leq c_2 |H(t)-H(u)|.\]
Now, from the Condition \ref{condi} (a) for $H$, one can conclude, just as in the proof of Proposition \ref{lemma:boundproba}, that there exists $\xi>0$, a deterministic constant, only depending on $H$ and $I$, such that, for all $0 < \varepsilon < \xi$ and $t,u \in I \cap [t_0- \varepsilon,t_0+\varepsilon]$,
\[ |\xi_3^{t,u}|> \frac{|\lambda_3|}{2} |t-u|^{\overline{H}(t_0,\varepsilon)}. \]
It follows from Lemma \ref{lemma:nualart2}, that, for all such $\varepsilon$ and $t,u$ and, for all $r \in (1, \frac{3}{2})$, that
\begin{align*}
\left\|\| D \left(X_d^{H(\cdot)}(t)-X_d^{H(\cdot)}(u) \right)\|_{L^2(\R)}^{-2} \right\|_{L^r(\Omega)} \leq c_r \frac{4}{3} |\lambda_3|^{-2} |t-u|^{-2 \overline{H}(t_0,\varepsilon)},
\end{align*}
with $c_r >0$ a deterministic constant depending only on $r$. For all $p>1$, $\mathbb{E}\left[ \left(X_d^{H(\cdot)}(t)-X_d^{H(\cdot)}(u) \right)^p \right] < \infty$. Finally, from \eqref{eqn:defofthenorms}, equality \eqref{eq:derivordre12}, Proposition \ref{prop:holdofgene} and Condition \ref{condi} (a), we deduce the existence of a deterministic constant $c_1 >0$, only depending on $H$ and $I$, such that, for all $0 < \varepsilon < \xi$ and $t,u \in I \cap [t_0- \varepsilon,t_0+\varepsilon]$
\[ \left\|\left(X_d^{H(\cdot)}(t)-X_d^{H(\cdot)}(u) \right) \right\|_{2,2} \leq c_1 |t-u|^{\overline{H}(t_0,\varepsilon)}. \]
Since, as a consequence of the hypercontractivity property on the Ornstein-Uhlenbeck semi group \cite[Theorem 2.7.2]{MR2962301}, all the $\| \cdot \|_{m,p} $ norms are equivalent in any finite sum of Wiener chaoses, one can conclude, by Lemma \ref{lemma:nualart1}, the existence of two deterministic constants $c,\xi>0$, both only depending on $H$ and $I$, such that, for all  $0 < \varepsilon < \xi$ and $t,u \in I \cap [t_0- \varepsilon,t_0+\varepsilon]$, $\left(X_d^{H(\cdot)}(t)-X_d^{H(\cdot)}(u) \right)$ has a continuous density bounded by $c |t-u|^{-\overline{H}(t_0,\varepsilon)}$. The conclusion follows immediately
\end{proof}

The proof of Theorem \ref{thm:rose} is then a direct adoption of the one of Theorem \ref{thm:dim}, using the improved estimate given by Proposition \ref{lemma:boundprobaorder2}.

\begin{proof}[Proof of Theorem \ref{thm:rose}]
It suffices to repeat the proof of Theorem \ref{thm:dim} using Proposition \ref{lemma:boundprobaorder2} instead of Proposition \ref{lemma:boundproba}. Therefore, we remove the factor $\frac{1}{d}$ in the computations \eqref{estimationespedim} and get
\[\mathbb{E}  \left[ \left(|X_d^{H(\cdot)}(t+r)-X_d^{H(\cdot)}(t)|^2+r^2 \right)^{- \frac{s}{2}} \right] \leq c'' r^{1-s-\overline{H}(t_0,j^{-1})}. \]
\end{proof}

\begin{Rmk}
As previously, we can note that the proof for the lower bound for the Hausdorff dimension requires a weaker assumption for the Hurst function $H$, see Remark \ref{rmk:redcuhyp} here over.
\end{Rmk}

\bibliography{biblio}{}
\bibliographystyle{plain}

\end{document}